\title{Inverse Spectral Problems for Collapsing Manifolds II: Quantitative Stability of Reconstruction for Orbifolds} 
\author{Matti Lassas,\, Jinpeng Lu,\, Takao Yamaguchi \\ \vspace{-2mm}
\\
\emph{Dedicated to the memory of Yaroslav Kurylev} }
\theoremstyle{theorem}
\newtheorem{lemma}{Lemma}[section]
\newtheorem{proposition}[lemma]{Proposition}
\newtheorem{coro}[lemma]{Corollary}
\newtheorem{theorem}[lemma]{Theorem}
\theoremstyle{definition}
\newtheorem{definition}[lemma]{Definition}
\newtheorem{remark}[lemma]{Remark}
\newtheorem{problem}[lemma]{Problem}
\numberwithin{equation}{section}
\def\R{\mathbb R}
\def\Z{\mathbb Z}
\def\N{\mathbb N}
\def\hat{\widehat}
\def\tilde{\widetilde}
\def \bfo {\begin {eqnarray*} }
\def \efo {\end {eqnarray*} }
\def \ba {\begin {eqnarray*} }
\def \ea {\end {eqnarray*} }
\def \beq {\begin {eqnarray}}
\def \eeq {\end {eqnarray}}
\def \bequ {\begin {equation}}
\def \eequ {\end {equation}}
\def \diam {\hbox{diam}}
\def\diag{\hbox{diag }}
\def \det {\hbox{det}}
\def\bra{\langle}
\def\cet{\rangle}
\def \e {\varepsilon}
\def \p {\partial}
\def\a{\alpha}
\def \LK {\Lambda_3}
\begin{document}

\AtEndDocument{\bigskip{\footnotesize%

  \textsc{Matti Lassas: Department of Mathematics and Statistics, University of Helsinki, FI-00014 Helsinki, Finland} \par  
  \textit{Email address}: \texttt{matti.lassas@helsinki.fi} \par
  
  \addvspace{\medskipamount}
  \textsc{Jinpeng Lu: Department of Mathematics and Statistics, University of Helsinki, FI-00014 Helsinki, Finland} \par
  \textit{Email address}: \texttt{jinpeng.lu@helsinki.fi} \par

\addvspace{\medskipamount}
  \textsc{Takao Yamaguchi: Institute of Mathematics, University of Tsukuba, Tsukuba 305-8571, Japan} \par
  \textit{Email address}: 
  \texttt{takao@math.tsukuba.ac.jp}
  
}}

\date{}
\maketitle

\begin{abstract}
We consider the inverse problem of determining the metric-measure structure of collapsing manifolds from local measurements of spectral data.
In the part I of the paper, we proved the uniqueness of the inverse problem and a continuity result for the stability in the closure of Riemannian manifolds with bounded diameter and sectional curvature in the measured Gromov-Hausdorff topology.
In this paper we show that when the collapse of dimension is $1$-dimensional, it is possible to obtain quantitative stability of the inverse problem for Riemannian orbifolds.
The proof is based on an improved version of the quantitative unique continuation for the wave operator on Riemannian manifolds by removing assumptions on the covariant derivatives of the curvature tensor.
\end{abstract}

\section{Introduction}

We consider the class of connected, closed, smooth Riemannian manifolds $(M,g)$ defined by
\begin{equation} \label{bounded-intro}
{\rm dim}(M)=n, \quad |R(M)|\leq \Lambda^2, \quad  {\rm diam}(M)\leq D,
\end{equation}
where $R(M), {\rm diam}(M)$ are the sectional curvature and diameter of $M$.
Denote by $\mu_M$ the normalized Riemannian measure,
\begin{equation} \label{normalized-measure}
d\mu_M=\frac{1}{\textrm{Vol}(M)}dV_g,
\end{equation}
where $\textrm{Vol}(M)$ is the Riemannian volume of $(M,g)$, and $dV_g$ is the Riemannian volume element of $(M,g)$.
We denote this class of manifolds $(M,g)$ equipped with their normalized measure $\mu_M$ by ${\frak M}{\frak M} (n,\Lambda,D)$. Let $\overline{{\frak M}{\frak M}}(n,\Lambda,D)$ be its closure with respect to the measured Gromov-Hausdorff topology (\cite{F87}).

It is well-known that a sequence of $n$-dimensional manifolds in the class ${\frak M\frak M}(n,\Lambda, D)$ can collapse to a lower dimensional space when the injectivity radius of the sequence of manifolds goes to zero.
A metric-measure space $(X,\mu)\in \overline{{\frak M\frak M}}(n,\Lambda, D)$ has the stratification 
\bequ
     X = S_0(X) \supset S_1(X) \supset \cdots \supset S_d(X),
       \label{eq:stratif}
\eequ
with the following property: if $S_j(X)\setminus S_{j+1}(X)$ is non-empty, then it is a 
$(d-j)$-dimensional Riemannian manifold, where $d=\textrm{dim}(X)\leq n$. The regular part of $X$ is the set
$X^{reg}:=S_0(X) \setminus S_1(X)$ which is an open $d$-dimensional Riemannian manifold of Zygmund class $C^2_{\ast}$, that is, the transition maps between coordinate charts are of class $C^3_*$ and the metric tensor in these charts is of class $C^2_*$ (\cite{F88,KLLY}).
Recall that the Zygmund space $C^2_*(X^{reg})$ has the relation $C^{1,1}(X^{reg})\subset C^2_*(X^{reg})\subset C^{1,\alpha}(X^{reg})$ for any $0<\alpha<1$.
The singular set of $X$ is the complement of the regular part, denoted by $X^{sing}:=X\setminus X^{reg}$ or $S$. In particular, the singular set $X^{sing}$ has dimension at most $d-1$, and $\mu(X^{sing})=0$.
The measure $\mu$ is absolutely continuous with respect to the Riemannian measure $dV_g$ on $X^{reg}$, with the density function $\rho_X\in C^2_*(X^{reg})$.
In particular, if $\dim(X)=n-1$ then $X$ is a Riemannian orbifold \cite{F90}. 
Recall that a metric space $X$ is called a Riemannian orbifold if, roughly speaking, for each $p\in X$ there exists a neighborhood $U$ of $p$ such that $U$ is isometric to the quotient of a Riemannian manifold by an action of a finite group of isometries (see e.g. \cite{DCGW,Satake,Thurston} or Appendix \ref{appendix_orbifold} for a formal definition).
As a basic example, the product manifold $\mathbb{S}^2\times [0,\epsilon]$ with points $({\bf x}, 0) \in \mathbb{S}^2 \times \{0\}$ and $(e^{2\pi i/m} \cdot {\bf x}, \epsilon) \in \mathbb{S}^2 \times \{\epsilon\}$ identified collapses to a Riemannian orbifold $\mathbb{S}^2/\mathbb{Z}_m$ as $\epsilon\to 0$, where $e^{2\pi i/m} \cdot {\bf x}$ stands for the rotation on the unit sphere $\mathbb{S}^2\subset \mathbb{R}^3$ by the angle $2\pi/m$ along the $z$-axis.
The orbifold $\mathbb{S}^2/\mathbb{Z}_m$, having the shape of a rugby ball, has conic singular points only at the north and south poles. 
Relevant figures and an example of collapsing manifolds in physics can be found in Section \ref{review-physics}.

As shown in \cite {F87}, the Dirichlet's quadratic form $A[u,v]=\bra du,dv\cet _{L^2(X^{reg},d\mu)}^2$,
$u,v\in C^{0,1}(X)$,  defines
a self-adjoint operator $\Delta_X$ on $(X, \mu)$, which we call the weighted Laplacian, denoted by $\Delta_X$.
In a local coordinate $(x^1,\cdots,x^d)$ of $(X^{reg},g_X)$, it has the form
\begin{equation} \label{Delta-intro}
 \Delta_X u= \frac{1}{\rho_X |g_X|^{\frac12}} \sum_{j,k=1}^d \frac \p{\p x^j} \Big(\rho_X |g_X|^{\frac12} g^{jk}_X \frac \p{\p x^k}  u  \Big), \quad |g_X|=\det\big((g_X)_{jk} \big).
\end{equation}
Note that in the part I of the paper \cite{KLLY} we used the nonnegative definite Laplacian, while in this paper we use analyst's nonpostive definite Laplacian. 
Denote by $\lambda_j$ the $j$-th eigenvalue of the weighted Laplacian $\Delta_X$ and by $\phi_j$ the corresponding orthonormalized eigenfunction in $L^2(X,\mu)$,
\begin{equation}
-\Delta_X \phi_j=\lambda_j \phi_j, \quad j=0,1,\cdots.
\end{equation}
Fukaya proved in \cite{F87} that the $j$-th eigenvalue, for any $j$, is a continuous function on  $\overline{{\frak M}\frak M}(n,\Lambda,D)$ with respect to the measured Gromov-Hausdorff topology.

\smallskip
We consider the following generalization of Gel'fand's inverse problem \cite{G54} in the class $\overline{{\frak M}{\frak M}}(n,\Lambda,D)$.

\begin{problem} \label{problem-collapsing}
Let $(X,\mu)\in \overline{{\frak M}\frak M}(n,\Lambda,D)$ and $p\in X$. Suppose that one can measure the spectral data $\{\lambda_j,\phi_j|_{B(p,r)}\}_{j=0}^{\infty}$ in a ball $B(p,r)$ of radius $r>0$. 
Do these data uniquely determine the metric structure of $X$ and the measure $\mu$?
\end{problem}

In the special case of $X$ being \emph{a priori} a Riemannian manifold of dimension $n$, the problem is reduced to the classical case that was solved in \cite{AKKLT,BK,KrKL}, with quantitative stability of $\log$-$\log$ type proved in \cite{BKL3,BILL}. Stronger H\"older type of stability estimates can be obtained in \cite{AG,BD,SU,SU2} if additional geometric assumptions are assumed, e.g., if the metric is close to being simple.
In general, the positive answer to this problem and an abstract continuity for the stability are proved in the part I of this paper \cite{KLLY}. In this part II, we investigate if a quantitative stability estimate for the problem can be obtained in the general case when $X$ is not a manifold.  
We also discuss applications of Gel'fand's inverse problem for collapsing manifolds in manifold learning and physics in Section \ref{sec-applications}.

For the purpose of obtaining a stability estimate, we need to impose bounds for the covariant derivatives of the curvature tensor.
Following similar notations in \cite{CFG}, we say a Riemannian manifold $M$ is $\Lambda_k$-regular if
\begin{equation} \label{higher-curvature-intro}
\| \nabla^i R(M) \| \leq \Lambda_k,\quad \textrm{for }\,i=1,\cdots,k.
\end{equation}
We denote the class of connected closed smooth Riemannian manifolds satisfying \eqref{bounded-intro} and \eqref{higher-curvature-intro} by ${\frak M}{\frak M}(n,\Lambda,\Lambda_k,D)$, and denote its closure by $\overline{{\frak M}{\frak M}}(n,\Lambda,\Lambda_k,D)$ with respect to the measured Gromov-Hausdorff topology.
Our main result is to show that when $\textrm{dim}(X)=n-1$, i.e., $X$ is an orbifold, it is possible to obtain a quantitative stability estimate for the reconstruction of the regular part from incomplete spectral data.

\begin{theorem} \label{thm-GH}
Let $(X,\mu) \in \overline{{\frak M}{\frak M}}(n,\Lambda,\LK,D)$ and $p\in X^{reg}$. Suppose $\dim(X)=n-1$ and ${\rm Vol}_{n-1}(X) \geq v_0$.
Let $\sigma\in (0,1)$, and $U\subset X^{reg}$ be an open subset containing a ball $B_X(p,r_0)$.
Then there exists $\widehat{\delta}=\widehat{\delta}(X,r_0,\sigma)>0$, such that
the finite interior spectral data $\{\lambda_j,\phi_j|_U\}_{j=0}^{\delta^{-1}}$ for $\delta<\widehat{\delta}$ determine a finite metric space $\widehat{X}$ such that
$$d_{GH}(X\setminus S_{\sigma,\delta},\widehat{X}) < C_1(X,\sigma) \Big(\log\log|\log \delta| \Big)^{-C_2},$$
where $S_{\sigma,\delta}$ is a subset (depending on $\sigma,\delta$) of the $C_3\sigma^{1/4}$-neighborhood of the singular set of $X$, and $X\setminus S_{\sigma,\delta}$ is equipped with the restriction of the metric of $X$.
The constant $C_1(X,\sigma)$ depends on $\sigma,r_0,n,\Lambda,\LK,D,v_0$ as well as the space $X$, and $C_1(X,\sigma)\to \infty$ as $\sigma\to 0$. The constant $C_2$ depends on $n$, and $C_3$ depends on $n,\Lambda,D$.

Moreover, suppose that the complete spectral data of $(X,\mu), (X',\mu')\in \overline{{\frak M}{\frak M}}(n,\Lambda,D)$ on open sets $U\subset X$, $U'\subset X'$ are equivalent by a homeomorphism $\Psi_{U}: U\to U'$, namely $\lambda_j=\lambda'_j$ and $\Psi_U^*(\phi'_j|_{U'})=\phi_j|_U$ for all $j\in \N$, where $\lambda'_j,\phi'_j$ are the eigenvalues and orthonormalized eigenfunctions of the weighted Laplacian on $(X',\mu')$.
Then there exists a measure-preserving isometry $F: X\to X'$ such that $F|_{U}=\Psi_U$ and $\mu=F^* (\mu').$
\end{theorem}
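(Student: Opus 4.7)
The plan is to execute a quantitative Boundary Control (BC) reconstruction in the class $\overline{{\frak M}{\frak M}}(n,\Lambda,\LK,D)$, built on the improved quantitative unique continuation (QUC) for the wave operator on $C^2_*$-metrics that the abstract advertises as the technical core of this paper. From the truncated spectral data $\{\lambda_j,\phi_j|_U\}_{j\le\delta^{-1}}$, one forms approximate wave propagators $u^f(t)=\sum_{j\le\delta^{-1}}(f,\phi_j)\sin(t\sqrt{\lambda_j})/\sqrt{\lambda_j}\cdot\phi_j$ for sources $f\in L^2(U)$; the inner product is computable from the data since $\supp f\subset U$, and an approximation of $\rho_X|_U$ can be recovered in tandem. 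Blagoveshchenskii's identity then yields approximations of $L^2(X,\mu)$ inner products $(u^f(T),u^h(T'))$ purely from the data, with truncation error controlled by Weyl-type eigenvalue bounds uniform on $\overline{{\frak M}{\frak M}}(n,\Lambda,D)$.

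By finite propagation speed, $u^f(T)$ is supported in the wave cap $M(f,T)=\{x\in X:d_X(x,\supp f)<T\}$. Combining the approximate inner products with the improved QUC, one detects wave caps quantitatively and thereby reconstructs, for each $x$ in a $\delta$-net $\mathcal{N}$ of $X\setminus S_{\sigma,\delta}$, an approximation of the boundary distance function $r_x(y):=d_X(x,y)$, $y\in U$. The finite metric space $\widehat X$ is $\mathcal{N}$ equipped with the BC-style distance reconstructed from the $\{r_x\}$, which by the standard BC argument recovers $d_X$ on $X\setminus S_{\sigma,\delta}$. The sub-stratum $S_{\sigma,\delta}$ of thickness $\sim\sigma^{1/4}$ is carved out so that Carleman-type constants in the QUC remain uniform on the complement; the exponent $1/4$ reflects a balance between curvature concentration near orbifold singularities and the Carleman weight scale, which is why $C_3$ depends only on $n,\Lambda,D$. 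Propagating the approximation error through the QUC inversion yields the stated triple-logarithmic rate, with $C_2=C_2(n)$ arising from the dimensional Weyl exponent and Carleman scaling.

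For the isometry statement, the complete data case falls within the uniqueness result of Part I \cite{KLLY}, which directly produces a measure-preserving isometry $F:X\to X'$. The identity $F|_U=\Psi_U$ then follows because the BC construction determines $F(x)$ for $x\in U$ as the unique point $x'\in X'$ whose boundary distance function equals $r_x\circ\Psi_U^{-1}$, a relation encoded entirely in the intertwining hypothesis $\Psi_U^*(\phi'_j|_{U'})=\phi_j|_U$ and $\lambda_j=\lambda'_j$. The main obstacle throughout is the QUC step itself: classical Carleman arguments require $C^{1,1}$ metric regularity and bounds on $\nabla R$, neither of which is available for $X\in\overline{{\frak M}{\frak M}}(n,\Lambda,\LK,D)$ whose regular part is only $C^2_*$. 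Establishing a Carleman estimate that uses only $C^2_*$ regularity together with pointwise bounds on the sectional curvature and $\nabla^i R$ for $i\leq 3$, and quantifying how its constants degenerate as one approaches $X^{sing}$, is where the bulk of the technical work lies and what dictates the final shape of $S_{\sigma,\delta}$.
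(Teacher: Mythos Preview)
Your proposal misses the paper's central structural idea: the quantitative unique continuation is \emph{not} carried out on $X$ at all. Instead, one lifts everything to the Riemannian manifold $Y\in\overline{{\frak F}{\frak M}{\frak M}}(n,\Lambda,\LK,D)$ of Theorem~\ref{thm-Y}, the limit of orthonormal frame bundles, on which $O(n)$ acts isometrically with $X=Y/O(n)$. The point is that $Y$ is a genuine closed manifold with $C^4$ metric (thanks to the $\LK$-regular condition), uniformly bounded sectional curvature (Lemma~\ref{sec-bound-Y}), and positive injectivity radius (from Lemma~\ref{volumebound-Y} and Cheeger's estimate). The QUC (Theorem~\ref{uc-Y}, Proposition~\ref{uc-whole}) is proved and applied on $Y$, where none of the obstacles you list arise; the improvement over \cite{BKL3,BILL} is removing higher curvature bounds via Ricci-flow smoothing \cite{Ba,BMR}, not coping with $C^2_*$ regularity. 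The eigenfunctions of $\Delta_X$ correspond exactly to the $O(n)$-invariant eigenfunctions of $\Delta_Y$ with the same eigenvalues, so all the wave computations in your Blagoveshchenskii step are instead run on $Y$ (Remark~\ref{remark-computable}, Proposition~\ref{projection-Y}), yielding approximate measures $\mu^a(X_\alpha)$ of domains of influence via \eqref{norm-iso} and \eqref{domain-influence-XY}.

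Consequently, your explanation of $S_{\sigma,\delta}$ is off. The set is not carved out to keep Carleman constants uniform; the QUC on $Y$ is already global and uniform. Rather, Sections~\ref{sec-recon}--\ref{sec-singular} use the approximate measures to build the discrete interior distance functions $\mathcal{R}^\ast$ (Lemma~\ref{distance-recon}), then locate the singular set by a cut-locus detection procedure (Lemmas~\ref{compactness1}--\ref{compactness4}) based on testing whether sets $N(x,\xi;\rho,s,\varepsilon)$ are empty. The exponent $\sigma^{1/4}$ comes from two applications of the geometric \cite[Lemma~5.1]{FILLN} in Step~4, not from any Carleman weight. The triple logarithm arises from the $\exp(h^{-C_5})$ in Theorem~\ref{uc-Y} combined with the $2^N$ factor in Lemma~\ref{volume-slice} where $N\sim\eta^{-\dim Y}$. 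Your treatment of the uniqueness clause via \cite{KLLY} is correct and matches the paper.
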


Recall that the singular set of $X$ is a union of submanifolds having dimension at most $n-2$.
Note that the incomplete data case of the theorem above is of an asymptotic nature as the reconstruction is done to a fixed orbifold.
Our method can also be extended to the case where the finite interior spectral data are given up to small noise.
The case for complete spectral data in the theorem above, valid for any dimension of collapse, is due to the part I of this paper \cite{KLLY}.

The basic idea for proving Theorem \ref{thm-GH} is as follows. Let $M_i\in {\frak M}{\frak M}(n,\Lambda,D)$ be a sequence of Riemannian manifolds converging to $X$. We consider the orthonormal frame bundles $FM_i$ of $M_i$ endowed with the natural Riemannian metric. Note that $O(n)$ isometrically acts on $FM_i$ and $M_i=FM_i/O(n)$. Passing to a subsequence, we may assume that $FM_i$ converge to a space $Y$, and $X=Y/O(n)$. 
The purpose of this construction is that unlike $X$, the limit space $Y$ is always a Riemannian manifold, see \cite{F87,F88}, and the Riemannian metric on $Y$ is of class $C^2$ under the $\Lambda_k$-regular condition \eqref{higher-curvature-intro}, see Section \ref{section-prelimi}.
Thus, we can apply the quantitative unique continuation for the wave equation and a quantitative version of the boundary control method on $Y$ to reconstruct the interior distance functions on $U\subset X$ in Section \ref{sec-coefficients} and \ref{sec-recon}. Next, using the interior distance functions, we approximately locate the singular set of $X$ and reconstruct the smooth metric structure away from the singular set in Section \ref{sec-singular}. 
However, due to the lack of higher order curvature bounds on the limit space $Y$, we need to improve the earlier results on the quantitative unique continuation \cite{BKL2,BKL3,BILL,DLLO} by removing the assumptions on the covariant derivatives of the curvature tensor.
This result is stated as follows.

\smallskip
Let $(Y,g)$ be a connected, closed, smooth manifold with $C^2$-smooth Riemannian metric $g$, and let $\rho_Y\in C^1(Y)$ be a density function on $Y$.
Consider the wave equation with the weighted Laplacian $\Delta_Y$ on $Y$,
\begin{equation} \label{wave-Y-intro}
(\partial_t^2-\Delta_Y) u= f,
\end{equation}
where $\Delta_Y$ is given by \eqref{Delta-intro} with $\rho_X$ replaced by the density function $\rho_Y$.

\begin{theorem}\label{uc-Y}
Let $(Y,g)$ be a connected, closed, smooth manifold with $C^2$-smooth Riemannian metric $g$, satisfying 
$${\rm dim}(Y)=n, \quad |R(Y)|\leq \Lambda^2, \quad  {\rm diam}(Y)\leq D, \quad {\rm Vol}(Y)\geq c_0.$$
Let $\rho_Y\in C^1(Y)$ be a density function on $Y$ and consider the weighted Laplacian operator $\Delta_Y$.
Suppose $u\in H^1(Y\times[-T,T])$ is a solution of the non-homogeneous wave equation \eqref{wave-Y-intro} with $f\in L^2(Y\times [-T,T])$. Let $V\subset Y$ be an open subset with smooth boundary.
If the norms satisfy
$$\|u\|_{H^1(Y\times[-T,T])}\leq E,\quad \|u\|_{H^{1}(V\times [-T,T])}\leq \varepsilon_0,$$
then for $0<h<h_0$, we have
$$\|u\|_{L^2(\Omega(h))}\leq C_4 \exp(h^{-C_5})\frac{E}{\bigg(\log \big(1+\frac{h^{-1}E}{\|f\|_{L^2(Y\times[-T,T])}+h^{-2}\varepsilon_0}\big)\bigg) ^{\frac{1}{2}}}\, .$$
The domain $\Omega(h)$ is defined by
$$\Omega(h)=\big\{(x,t)\in (Y \setminus V)\times [-T,T]: T-|t|-d_Y(x,V) > \sqrt{h} \big\}.$$
The constants $C_4,h_0$ explicitly depend on $n,T,\Lambda,D,c_0$ and $\|\rho_Y\|_{C^{0,1}}$, and $C_5$ explicitly depends on $n$. The norms are with respect to the Riemannian volume element on $(Y,g)$.
\end{theorem}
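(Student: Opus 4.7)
The plan is to establish Theorem \ref{uc-Y} by a Carleman estimate for the hyperbolic operator $\partial_t^2-\Delta_Y$, followed by propagation through a chain of lens-shaped subdomains and an interpolation argument that produces the logarithmic modulus of continuity. The novelty compared with the approaches of \cite{BKL2,BKL3,BILL,DLLO} is that the weight function must be chosen to be \emph{geometric}, so that its strict pseudo-convexity is certified using only the sectional curvature bound $|R(Y)|\leq \Lambda^2$ and not any bound on $\nabla R$. Before the Carleman step, I would use Cheeger's lemma together with $|R(Y)|\leq \Lambda^2$, ${\rm diam}(Y)\leq D$ and ${\rm Vol}(Y)\geq c_0$ to obtain a uniform positive lower bound $i_0=i_0(n,\Lambda,D,c_0)$ on the injectivity radius; on balls of radius $\leq i_0$ one then has harmonic coordinate charts in which the metric is uniformly $C^{1,\alpha}$. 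This regularity, combined with $\|\rho_Y\|_{C^{0,1}}$, is all that is needed to justify the integrations by parts in the Carleman identity.

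The main step is the local Carleman estimate. I would take a weight of the form $\phi(x,t)=\tfrac12 d_Y(x,x_0)^2-t^2+c$, with $x_0$ close to $V$, and work with the exponentiated weight $e^{\tau\phi/h}$ for large $\tau$. The crucial observation is that the pseudo-convexity condition on $\phi$ reduces, up to lower order terms, to a Hessian bound for $d_Y(\cdot,x_0)^2$, which by the Hessian comparison theorem is controlled purely by $|R(Y)|\leq \Lambda^2$, with no covariant derivative of $R$ appearing. Since $d_Y(\cdot,x_0)^2$ is only $C^{1,1}$ and fails to be smooth across the cut locus of $x_0$, I would regularize it by mollification in normal coordinates (or by a small convex perturbation), keeping its $C^2$ norm controlled by the same comparison constants. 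The resulting local Carleman inequality has the shape
\[\tau\int e^{2\tau\phi/h}|u|^2\,dV_g\,dt \;\leq\; \frac{C}{h}\int e^{2\tau\phi/h}\bigl(|(\partial_t^2-\Delta_Y)u|^2 + |\nabla u|^2\chi_V\bigr)\,dV_g\,dt,\]
with $C$ depending only on $n,T,\Lambda,D,c_0,\|\rho_Y\|_{C^{0,1}}$.

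Given the Carleman estimate, the remaining steps follow the standard Tataru scheme. One chains local estimates along overlapping lens-shaped regions whose heights are dictated by $T-|t|-d_Y(x,V)$; since $\phi$ is quadratic in the distance, the boundary $\partial\Omega(h)$ must lie at distance $\sqrt{h}$ from the light cone over $V$, which is exactly the buffer in the definition of $\Omega(h)$. Balancing the exponential gain of $e^{\tau\phi/h}$ in the reachable set against the corresponding loss on $V\times[-T,T]$, and then optimizing $\tau$ (equivalently $h$) against $\log(E/\varepsilon_0)$, produces the $(\log)^{-1/2}$ modulus stated in the theorem; the $\exp(h^{-C_5})$ prefactor absorbs the product of local Carleman weights across the chain of propagation steps, with $C_5$ depending only on $n$. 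The principal obstacle is the Carleman step at $C^2$ regularity: in the cited references the weights are built via iterated commutators with $\Delta_Y$, which unavoidably introduce Christoffel derivatives and hence $\nabla R$. Replacing those by the geometric distance-squared weight removes that dependence, but at the cost of a delicate cut-locus regularization that must preserve the pseudo-convexity margin furnished by Rauch comparison.
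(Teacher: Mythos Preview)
Your proposal takes a genuinely different route from the paper. The paper does \emph{not} build a new Carleman weight directly from $d_Y(\cdot,x_0)^2$; instead it regularizes the \emph{metric} via the Ricci flow (following \cite{Ba,BMR}), obtaining $g_\epsilon$ with $\|g-g_\epsilon\|_{C^0}\le\epsilon$, $|R(Y,g_\epsilon)|\le 2\Lambda^2$ (by the maximum principle), and $\|\nabla^k R(Y,g_\epsilon)\|\le C(n,\Lambda)(k+1)!\,\epsilon^{-k/2}$. The point is that the distance function $d_\epsilon$ of the \emph{smoothened} metric is then genuinely smooth within a uniform injectivity radius, and its $C^{2,\alpha'}$ norm in harmonic coordinates of $g$ is controlled via the Riccati equation using the ($\epsilon$-dependent) $C^1$ curvature bound of $g_\epsilon$. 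With this smooth weight the paper can invoke the existing machinery of \cite{BKL2,BILL} verbatim; the polynomial blow-up in $\epsilon^{-1}$ is absorbed into $\exp(h^{-C_5})$ by setting $\epsilon=h$. What this buys is that all geometric quantities (injectivity radius, non-characteristic condition $p((x,t),\nabla\psi_j)>0$) are controlled by the \emph{uniform} sectional curvature bound of $g_\epsilon$, while the higher regularity needed for the Carleman constants is supplied separately by the $\epsilon$-dependent bounds.

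Your idea of mollifying $d_Y^2$ directly is in the same spirit but has a gap as written. The Hessian comparison gives only $C^{1,1}$ control on $d_Y^2$, and mollification at scale $\epsilon$ yields a weight whose $C^2$ norm is uniform but whose $C^{2,\alpha}$ and higher norms blow up like powers of $\epsilon^{-1}$. The Tataru-type estimate you ultimately need (the one in \cite{BKL2}, not the classical exponential-weight inequality you sketch) has constants depending on exactly these higher norms, and you do not track this dependence; nor do you verify that strict pseudo-convexity (equivalently, the non-characteristic margin) survives mollification with a quantitative lower bound. Both issues are what the paper's Ricci-flow regularization handles cleanly: the non-characteristic condition is checked directly using $|\nabla^\epsilon d_\epsilon|=1$ and $\|g-g_\epsilon\|_{C^0}\le\epsilon$, and the higher-norm blow-up is explicit from the curvature-derivative bounds. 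Your approach might be salvageable with this bookkeeping, but as it stands the reduction to ``only the Hessian bound matters'' overlooks where the higher regularity of the weight actually enters.
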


The proof of Theorem \ref{uc-Y} is based on the smoothening of Riemannian metric \cite{Ba,BMR} and is done in Section \ref{sec-UC}.

\medskip
\noindent \textbf{Acknowledgements.}
The authors thank Atsushi Kasue for helpful discussions.
 M.L. and J.L. were supported by PDE-Inverse project of the European Research Council of the European Union, 
 the FAME-flagship and the grant 336786 of the Research Council of Finland. Views and opinions expressed are
those of the authors only and do not necessarily reflect those of the European
Union or the other funding organizations.  
Neither the European Union nor the other funding organizations can be held responsible for them. 
T.Y. was supported by JSPS KAKENHI Grant Number 21H00977.

\section{Preliminaries}
\label{section-prelimi}

\subsection{Geometric structure of collapsing manifolds}

Given $n\in \N_+$, $\Lambda,D>0$, let ${\frak M}(n,\Lambda,D)$ denote the class of connected, closed, smooth Riemannian manifolds $(M,g)$ satisfying
\begin{equation}
{\rm dim}(M)=n, \quad |R(M)|\leq \Lambda^2, \quad  {\rm diam}(M)\leq D,
\end{equation}
and let $\overline{{\frak M}}(n,\Lambda,D)$ denote its closure in the Gromov-Hausdorff topology.
Suppose that $X \in \overline{{\frak M}}(n,\Lambda,D)$ is a compact metric space which is the Gromov-Hausdorff limit of closed Riemannian manifolds $M_i \in {\frak M}(n,\Lambda,D)$.
Let $FM_i$ be the orthonormal frame bundle of $M_i$. 
Fixing a Riemannian metric on $O(n)$, $FM_i$ has a natural Riemannian structure with uniformly bounded sectional curvature by O'Neill's formula,
\begin{equation} \label{sectional-FM}
|R(FM)| \leq \Lambda_F^2=C(n,\Lambda).
\end{equation}
The dimension and diameter of $FM$ satisfy
\begin{equation}
\dim(FM)=n+\dim(O(n)),\quad \textrm{diam}(FM)\leq D_F=C(n,D).
\end{equation}

The limit space $X$ has the following geometric structure due to Fukaya.

\begin{theorem}[\cite{F88}] \label{thm-Y}
Let $X\in \overline{{\frak M}}(n,\Lambda,D)$. Then there exists a smooth manifold $Y$ with $C^{1,\alpha}$-Riemannian metric, for any $0<\alpha<1$, 
on which $O(n)$ acts as isometries in such a way that $X$ is isometric to $Y/O(n)$, and $\dim(Y)=\dim(X)+\dim(O(n))$.
\end{theorem}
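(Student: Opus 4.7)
The plan is to establish the theorem via Fukaya's frame bundle construction. Pick a sequence $M_i \in {\frak M}(n,\Lambda,D)$ with $M_i \to X$ in the Gromov-Hausdorff topology, and consider the orthonormal frame bundles $FM_i$ equipped with the natural Riemannian structure (from the Levi-Civita connection and a fixed bi-invariant metric on $O(n)$). As noted around \eqref{sectional-FM}, the spaces $FM_i$ have sectional curvature and diameter uniformly bounded in terms of $n,\Lambda,D$, and $O(n)$ acts freely and isometrically on $FM_i$ with quotient $M_i$.

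First I would extract a subsequence so that $FM_i \to Y$ in the Gromov-Hausdorff topology, using Gromov's precompactness theorem. The equivariant Gromov-Hausdorff convergence of Fukaya then upgrades this to a convergence in which $O(n)$ acts by isometries on the limit $Y$, with the identification $Y/O(n)=X$ as metric spaces inherited from the commutative diagram $FM_i\to M_i$, $Y\to X$.

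The heart of the proof is to establish the uniform lower bound $\textrm{inj}(FM_i) \geq \iota_0(n,\Lambda) > 0$, independent of $i$. This is Fukaya's \emph{frame bundle injectivity radius estimate}: any short closed geodesic in $FM$ projects to a loop $\gamma$ in $M$, and the fiber displacement of the geodesic is identified with the parallel transport around $\gamma$. By an Ambrose-Singer type estimate, the deviation of this holonomy from the identity is controlled by $\sup|R(M)|$ times the area of a disk bounding $\gamma$, which is small when $\gamma$ is short (using the curvature and diameter bounds to produce a small bounding disk). A careful analysis then shows that any closed geodesic in $FM$ of length less than $\iota_0(n,\Lambda)$ must be trivial, yielding the claimed injectivity radius bound. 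This step — the only non-standard one — is the main obstacle.

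Given this lower bound, together with the uniform control on $|R(FM_i)|$ and $\textrm{diam}(FM_i)$, the Cheeger-Gromov compactness theorem produces a further subsequence converging in the $C^{1,\alpha}$ topology, for every $\alpha\in(0,1)$, to a smooth manifold $Y$ carrying a $C^{1,\alpha}$-Riemannian metric. The isometric $O(n)$-action passes to the limit in the equivariant version of Cheeger-Gromov convergence, yielding a $C^{1,\alpha}$-isometric $O(n)$-action on $Y$ with $Y/O(n)=X$. The dimension formula $\dim(Y)=\dim(X)+\dim(O(n))$ follows because the principal orbits of the $O(n)$-action on $Y$ arise as Hausdorff limits of the free principal orbits on $FM_i$ and therefore have dimension $\dim(O(n))$, while $X=Y/O(n)$ accounts for the remaining dimensions. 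Once the uniform injectivity radius estimate is in hand, everything else reduces to standard Cheeger-Gromov machinery applied equivariantly.
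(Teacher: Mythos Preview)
Your proposal contains a genuine error at its core: the claimed uniform lower bound $\textrm{inj}(FM_i) \geq \iota_0(n,\Lambda) > 0$ is false. Take for instance $M_i$ to be a flat $2$-torus $\R^2/\Lambda_i$ with one lattice basis vector of length $\epsilon_i \to 0$; since the holonomy is trivial, $FM_i \cong M_i \times O(2)$, whose injectivity radius is at most $\epsilon_i/2$. Your holonomy heuristic actually points in the wrong direction: if a short loop $\gamma$ in $M$ has holonomy close to the identity, then its horizontal lift to $FM$ \emph{nearly closes up}, producing (after a small correction in the fiber) a short closed curve in $FM$ --- not ruling one out. More generally, whenever $\dim(X)<n$ the frame bundles $FM_i$ themselves collapse to $Y$, since $\dim(Y)=\dim(X)+\dim(O(n))<n+\dim(O(n))=\dim(FM_i)$; so no Cheeger--Gromov non-collapsing argument can be applied to the $FM_i$ directly.

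The paper's (i.e.\ Fukaya's) argument avoids this by working locally: pull back the metric via $\exp_{p_i}$ to a ball $B\subset\R^n$, where the injectivity radius of $(B,\tilde g_i)$ \emph{is} uniformly bounded below, and track the local pseudogroup $G_i$ of deck-type isometric embeddings. The limit pseudogroup $G$ acts isometrically on $(B',\tilde g_0)$, and the induced action $\hat G$ on the frame bundle $F(B',\tilde g_0)$ is \emph{free}, because an isometry fixing both a point and an orthonormal frame at that point must be the identity. Freeness of $\hat G$ is what makes the quotient $F(B',\tilde g_0)/\hat G$ --- a local model for $Y$ --- a smooth manifold with $C^{1,\alpha}$ metric, even though the global frame bundles $FM_i$ collapse. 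This free-action mechanism is the essential idea your argument is missing.
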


The manifold $Y$ is constructed as the Gromov-Hausdorff limit of the orthonormal frame bundles $FM_i$ of $M_i$.
Here we briefly explain why $Y$ is a manifold. 
Fix any point $p\in X$ and put  $p_i:= \psi_i(p)$, where $\psi_i:X\to M_i$ is
an $\e_i$-Gromov-Hausdorff approximation with $\lim\limits_{i\to\infty} \e_i = 0$.
Let $B \subset \R^n$ be the open ball around the origin $\it O$ in $\R^n$ of radius $\pi/\Lambda$, and
let $\exp_i: B\to M_i$ be the composition of the exponential map $\exp_{p_i}:
 T_{p_i}(M_i)\to M_i$ and a linear isometric embedding 
$B\to B({\it O},\frac{\pi}{\Lambda})\subset T_{p_i}(M_i)$.
Since $|R(M_i)| \leq \Lambda^2$, the exponential map on $B(O,\frac{\pi}{\Lambda})$ has maximal rank, and 
we have the pull-back metric $\tilde g_i := \exp_{i}^*(g_{i})$ on $B\subset \R^n$.
Moreover, the injectivity radius of $(B,\tilde g_i)$ is uniformly bounded from below, e.g. \cite[Lemma 8.19]{GLP}.
Therefore, we may assume that $(B, \tilde g_i)$ converges to a
$C^{1,\alpha}$-metric $(B, \tilde g_0)$ in the $C^{1}$-topology, 
for any $0<\alpha < 1$ (\cite{A90,AKKLT}).

Let $G_i$ denote the set of all  isometric 
embeddings $\gamma:(B', \tilde g_i) \to (B, \tilde g_i)$ such that
$\exp_i \circ \gamma = \exp_i$ on $B'$, where $B':= B({\it O}, \frac{\pi}{2\Lambda})\subset\R^n$. 
Then $G_i$ forms a local pseudogroup, see e.g. \cite[Section 7]{F90}.
Passing to a subsequence, we may assume that
$G_i$ converges to a local pseudogroup   $G$ consisting of isometric 
embeddings $\gamma:(B', \tilde g_0) \to (B, \tilde g_0)$, and that
$(B,\tilde g_i, G_i)$ converges to $(B, \tilde g_0,G)$ in the 
equivariant Gromov-Hausdorff topology. The quotient space $(B',\tilde g_i)/G_i=B(p_i, \frac{\pi}{2\Lambda})\subset M_i$ 
converges to $(B',\tilde g_0)/G$,
which implies that 
$(B', \tilde g_0)/G=B(p, \frac{\pi}{2\Lambda}) \subset X.$
(See \cite{FY:fundgp} for further details on basic properties of the equivariant Gromov-Hausdorff convergence.)

The fact that $Y$ is a Riemannian manifold can be seen as follows.
The pseudogroup action of $G_i$ on $(B', \tilde g_i)$ induces an isometric pseudogroup
action, denoted by $\hat G_i$, on the frame bundle $F(B', \tilde g_i)$ of
$(B', \tilde g_i)$ defined by differential. Therefore, 
$F(B', \tilde g_i)/\hat G_i = FB(p_i, \frac{\pi}{2\Lambda})$. Passing to a subsequence,
we may assume that  $(F(B', \tilde g_i), \hat G_i)$ 
converges to $(F(B', \tilde g_0), \hat G)$ in the equivariant
GH-topology, where $\hat G$ denotes the
isometric pseudogroup action on $F(B',\tilde g_0)$ induced from that of 
$G$ on $(B', \tilde g_0)$. 
Since the action of $G$ on $(B', \tilde g_0)$ is isometric, the action of $\hat G$ on  $F(B', \tilde g_0)$
is free. 
Therefore, $F(B',\tilde g_0)/\hat G$ is a Riemannian manifold, and
so is $Y$. The regularity of the metric follows from \cite{A90,AKKLT,F88,GW:lipschitz,Pets:conv}.


\smallskip
From the construction above, the diameter of $Y$ is clearly uniformly bounded:
\begin{equation} \label{diam-Y}
\textrm{diam}(Y)\leq C(n,D).
\end{equation}
Furthermore, under the additional $\LK$-regular condition
\begin{equation} \label{higher-curvature}
\| \nabla^i R(M) \| \leq \LK, \quad \textrm{for }\, i=1,2,3,
\end{equation}
the lifted metrics $\tilde g_i$ are of $C^{4,\alpha}$ in harmonic coordinates in the construction above (\cite{A90,HH,K93}), which, passing to a subsequence, converge to a metric $\tilde g_0 \in C^{4,\alpha}$ in the $C^4$-topology. Since the induced pseudogroup action $\hat G$ on the orthonormal frame bundle $F(B',\tilde g_0)$ is isometric and free, the Riemannian manifold $Y$ has $C^{4,\alpha}$-metric tensor and in particular, the sectional curvature of $Y$ is defined.

Let ${\frak F}{\frak M}(n,\Lambda,\LK,D)$ denote the set of orthonormal frame bundles $FM$ of closed Riemannian manifolds $M$ in the class ${\frak M}(n,\Lambda,D)$ satisfying the additional $\LK$-regular condition \eqref{higher-curvature}, and let $\overline{{\frak F}{\frak M}}(n,\Lambda,\LK,D)$ denote its closure in the Gromov-Hausdorff topology. 

\begin{lemma} \label{sec-bound-Y}
The sectional curvature of any $Y\in \overline{{\frak F}{\frak M}}(n,\Lambda,\LK,D)$ is uniformly bounded:
\begin{equation} \label{curvatureY}
|R(Y)| \leq C(n, \Lambda,\LK,D).
\end{equation}
\end{lemma}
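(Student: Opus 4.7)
The plan is to transport the uniform bound \eqref{sectional-FM} on the sectional curvatures of the approximating frame bundles $FM_i$ to the limit $Y$. The key inputs are the local description of $Y$ as a quotient of a Riemannian manifold by a free isometric pseudogroup action, together with the enhanced regularity afforded by the $\LK$-regular condition \eqref{higher-curvature}.

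First, I would localize as in the discussion following Theorem \ref{thm-Y}: every $y\in Y$ has a neighborhood isometric to $F(B',\tilde g_0)/\hat G$, where $\tilde g_0$ is the limit of the pulled-back metrics $\tilde g_i=\exp_i^*(g_i)$. Under \eqref{higher-curvature}, the lifted metrics $\tilde g_i$ are uniformly $C^{4,\alpha}$-bounded in harmonic coordinates, with bounds depending on $n,\Lambda,\LK,D$, and a subsequence converges in $C^4$-topology to $\tilde g_0\in C^{4,\alpha}$. Since the natural metric on the orthonormal frame bundle $F(B',\tilde g)$ is an algebraic expression in $\tilde g$ and its first derivatives (through the Levi-Civita connection), the natural metrics on $F(B',\tilde g_i)$ converge in $C^3$-topology to that on $F(B',\tilde g_0)$.

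Second, because the pseudogroup $\hat G_i$ acts by isometries, the local projection $F(B',\tilde g_i)\to FM_i$ is a local isometry, so \eqref{sectional-FM} yields $|R(F(B',\tilde g_i))|\leq C(n,\Lambda)$ uniformly in $i$. The sectional curvature is an algebraic expression in the metric and its first two derivatives, so the $C^3$-convergence of the frame bundle metrics forces pointwise convergence of the full curvature tensors. Passing to the limit gives $|R(F(B',\tilde g_0))|\leq C(n,\Lambda)$. Finally, since $\hat G$ acts freely and isometrically on $F(B',\tilde g_0)$, the projection to the quotient is a local isometry, so the sectional curvature of $Y$ at $y$ equals that of $F(B',\tilde g_0)$ at any preimage point. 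This yields \eqref{curvatureY}.

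The main obstacle is justifying the pointwise passage of the curvature bound to the limit, and this is precisely where the $\LK$-regular hypothesis is indispensable. Without \eqref{higher-curvature}, the metric $\tilde g_0$ is only $C^{1,\alpha}$ (by Theorem \ref{thm-Y}), so the sectional curvature of $Y$ is not defined pointwise and the uniform bound \eqref{sectional-FM} cannot be transferred in the naive sense. Upgrading to $C^{4,\alpha}$-convergence under \eqref{higher-curvature} is exactly what makes both the pointwise definition of $R(Y)$ and the pointwise limit $R(\tilde g_i)\to R(\tilde g_0)$ available, closing the argument.
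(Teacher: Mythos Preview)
Your argument has a genuine gap in the final step. You assert that because $\hat G$ acts freely and isometrically on $F(B',\tilde g_0)$, the projection to the quotient $Y$ is a local isometry. This is only true when $\hat G$ is \emph{discrete}. In the collapsing situation $\dim(X)<n$, the limit pseudogroup $G$ (and hence $\hat G$) has positive dimension: indeed $\dim F(B',\tilde g_0)=n+\dim O(n)$ while $\dim Y=\dim X+\dim O(n)$, so the $\hat G$-orbits have dimension $n-\dim X>0$. The projection $F(B',\tilde g_0)\to Y$ is then a Riemannian submersion with positive-dimensional fibers, not a local isometry. O'Neill's formula gives $K_Y(\pi_*u,\pi_*v)=K_{F(B',\tilde g_0)}(u,v)+3|A_uv|^2/|u\wedge v|^2$ for horizontal $u,v$, so your bound on $|R(F(B',\tilde g_0))|$ yields the \emph{lower} bound on $R(Y)$ for free, but the upper bound requires controlling the O'Neill tensor $A$, which you have not addressed.

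The paper takes a different route for the upper bound: a contradiction argument via \cite[Lemma~7.8]{F88}. If the sectional curvature were unbounded along a sequence $y_k\in Y_k$, that lemma forces the isotropy group at the limit point to have positive dimension, contradicting the freeness of the $\hat G$-action. The $C^4$-regularity furnished by \eqref{higher-curvature} enters only because \cite[Lemma~7.8]{F88} rests on a curvature estimate requiring $C^4$ metrics. Your convergence-of-curvatures strategy could in principle be completed by bounding $|A|$ uniformly (this amounts to controlling the second fundamental forms of the $\hat G$-orbits, which should follow from the $C^3$-control you have on the frame-bundle metrics), but as written the proof is incomplete precisely in the case the lemma is meant to cover.
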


\begin{proof}
The lower curvature bound is immediate: $Y$ has curvature bounded from below in the sense of Alexandrov (e.g. \cite[Proposition 10.7.1]{BBI}).
The argument for the upper sectional curvature bound is due to \cite{F88}.
Suppose that the sectional curvature is unbounded at $y_k\in Y_k$, then \cite[Lemma 7.8]{F88} implies that the isotropy group $G_{y_0}$ at $\lim\limits_{k\to\infty} y_k=y_0\in Y_0=\lim\limits_{k\to \infty} Y_k$ has positive dimension. However, the (induced) pseudo-group action $\hat G$ on the orthonormal frame bundle is free, so the isotropy group is trivial, which is a contradiction.
Note that \cite[Lemma 7.8]{F88} relies on the sectional curvature estimate in \cite[Lemma 7.2]{F88}, and the latter requires that $Y$ has $C^4$-smooth Riemannian metric.
\end{proof}

The next lemma concerns the relation between the volumes of $X$ and $Y$.

\begin{lemma} \label{volumebound-Y}
Let $X \in \overline{{\frak M}}(n,\Lambda,D)$ and $Y \in \overline{{\frak F}{\frak M}}(n,\Lambda,D)$ be defined as in Theorem \ref{thm-Y}. Then we have the following identity:
$${\rm Vol}_{d+\dim(O(n))} (Y) = {\rm Vol}(O(n)) \, {\rm Vol}_d(X).$$
\end{lemma}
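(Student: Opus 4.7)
The plan is to view the quotient map $\pi: Y \to X = Y/O(n)$ as a Riemannian submersion on the regular stratum and apply a Fubini/coarea type argument to the isometric $O(n)$-action. Since $O(n)$ acts by isometries on $Y$ and the difference of dimensions is exactly $\dim(O(n))$, every orbit has dimension $\dim(O(n))$, so the isotropy groups are discrete. The identity to prove then reduces to the statement that the principal isotropy is trivial, so that the generic fiber of $\pi$ is isometric to $O(n)$ with the fixed bi-invariant metric.

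First I would establish that the $O(n)$-action on $Y$ is free over $X^{reg}$. In the local construction recalled in the excerpt, $Y$ is locally the quotient $F(B',\tilde g_0)/\hat{G}$ of the orthonormal frame bundle of the lifted metric by the limit pseudogroup $\hat{G}$, and the $O(n)$-action descends from the natural free $O(n)$-action on the frame bundle. Over a regular point $x\in X^{reg}$, the isotropy of $\hat{G}$ in the base is trivial (this is precisely what distinguishes regular from singular points of the quotient $X=B'/G$), so the commuting actions of $\hat{G}$ and $O(n)$ on the frame bundle do not create any extra stabilizers in the quotient. Consequently, on $\pi^{-1}(X^{reg})$, the map $\pi$ is a principal $O(n)$-bundle and a Riemannian submersion, with every fiber isometric to $(O(n),g_{O(n)})$.

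Next, I would apply the coarea formula for Riemannian submersions,
\begin{equation*}
\text{Vol}_{d+\dim(O(n))}\bigl(\pi^{-1}(X^{reg})\bigr)=\int_{X^{reg}}\text{Vol}\bigl(\pi^{-1}(x)\bigr)\,dV_{X^{reg}}(x)=\text{Vol}(O(n))\cdot \text{Vol}_d(X^{reg}),
\end{equation*}
using that all principal fibers have the same volume $\text{Vol}(O(n))$. Finally, the singular set $X^{sing}$ has Hausdorff dimension at most $d-1$ by the stratification \eqref{eq:stratif}, so $\pi^{-1}(X^{sing})\subset Y$ has dimension at most $d-1+\dim(O(n))<\dim(Y)$; both sets therefore have measure zero in their ambient spaces, and removing them changes neither side of the identity.

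The main obstacle is step one: verifying rigorously that no extra isotropy appears in the limit of the principal $O(n)$-bundles $FM_i\to M_i$ over regular points of $X$. Once this is in place via the equivariant Gromov--Hausdorff framework and the freeness of $\hat{G}$ on $F(B',\tilde g_0)$ recalled in the proof of Theorem~\ref{thm-Y}, the remainder is a standard coarea computation plus a measure-zero argument for the singular stratum.
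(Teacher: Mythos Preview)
Your outline is correct and your endpoint---the coarea formula for the Riemannian submersion $\pi:Y\to X$---is exactly what the paper uses. The difference lies in how the fiber volume is identified. You restrict to $X^{reg}$, argue freeness of the $O(n)$-action there via isotropy analysis in the local model $F(B',\tilde g_0)/\hat G$, apply coarea on $\pi^{-1}(X^{reg})$, and then discard the singular stratum by a measure-zero argument. The paper instead shows that $\pi^{-1}(x)$ is isometric to $O(n)$ for \emph{every} $x\in X$, via the commuting square of fibrations $f_i:M_i\to X$, $\tilde f_i:FM_i\to Y$ with $f_i\circ\pi_i=\pi\circ\tilde f_i$: the common preimage $(f_i\circ\pi_i)^{-1}(x)=(\pi\circ\tilde f_i)^{-1}(x)$ is, on one hand, the frame bundle restricted to the collapsing fiber $f_i^{-1}(x)\subset M_i$ and hence converges to $O(n)$, while on the other hand it equals $\tilde f_i^{-1}(\pi^{-1}(x))$ and hence converges to $\pi^{-1}(x)$. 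This convergence argument dissolves what you correctly flag as your ``main obstacle''---no isotropy computation is needed, and your heuristic that regular points are exactly those with trivial $G$-isotropy on $B'$ (which is not literally correct in general collapse) never has to be made precise. As a bonus, the paper's route makes the separate measure-zero treatment of $X^{sing}$ unnecessary.
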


\begin{proof}


Let $M_i\in {\frak M}(n,\Lambda,D)$ be a sequence of manifolds converging to $X$, and, passing to a subsequence, the orthonormal frame bundles $FM_i\in {\frak F}{\frak M}(n,\Lambda,D)$ converges to $Y$.
We have fibrations
$f_i:M_i\to X$ and $\tilde f_i:FM_i\to Y$ such that 
$f_i\circ \pi_i=\pi\circ \tilde f_i$, where $\pi_i:FM_i\to M_i$ is the projection. On the other hand, for every $x\in X$, as $i\to\infty$,
$(f_i\circ \pi_i)^{-1}(x)$ and $(\pi\circ \tilde f_i)^{-1}(x)$
converge to $O(n)$ and $\pi^{-1}(x)$ respectively.
Thus, $\pi^{-1}(x)$ is isometric to $O(n)$. $\pi:Y\to X=Y/O(n)$ is the projection along 
$O(n)$-orbits, and the distance of $X$ is the orbit 
distance, $d_X(x,x')=d_Y(\pi^{-1}(x),\pi^{-1}(x'))$ for all
$x,x'\in X$.
Thus, $|d\pi(v)|=1$ if $v$ is normal to the $O(n)$-orbit. 
Then the claim follows from the co-area formula.
\end{proof}

In addition, we assume a lower bound for the volume of $X$,
\begin{equation} \label{volume-lower-X}
\textrm{Vol}_d(X)\geq v_0, \quad d=\dim(X).
\end{equation}
Thus, Lemma \ref{volumebound-Y} gives us a lower bound on the volume of $Y$.
Then by Cheeger's injectivity radius estimate (\cite{C2,HK}) and Lemma \ref{sec-bound-Y}, we have
\begin{equation} \label{inj-bound}
\textrm{inj}(Y)\geq i_0=i_0(n,\Lambda,\LK,D,\textrm{dim}(X),v_0).
\end{equation}

\subsection{Density functions and the weighted Laplacian}

In \cite{F87}, Fukaya introduced a topology on the set of compact metric spaces equipped with Borel measures, called the measured Gromov-Hausdorff topology. A sequence of compact metric-measure spaces $(X_{i},\mu_i)$ with $\mu_i(X_i)=1$ is said to converge to $(X,\mu)$ in the measured Gromov-Hausdorff topology, if $X_i$ converges to $X$ in the Gromov-Hausdorff topology and, in addition, the measure $(\psi_i)_{\ast}(\mu_i)$ converges to $\mu$ in the weak$^*$ topology, where $\psi_i:X_i\to X$ is an $\varepsilon$-Gromov-Hausdorff approximation.
When $X_i$ is a Riemannian manifold, the measure $\mu_i$ is taken to be the normalized Riemannian measure \eqref{normalized-measure}.

We denote by ${\frak M}{\frak M} (n,\Lambda,D)$ the set of closed Riemannian manifolds $M$ in the class ${\frak M} (n,\Lambda,D)$ equipped with their normalized Riemannian measure $\mu_M$, and by $\overline{{\frak M}{\frak M}}(n,\Lambda,D)$ its closure with respect to the measured Gromov-Hausdorff topology.
Similarly for the orthonormal frame bundles, we denote by ${\frak F}{\frak M}{\frak M}(n,\Lambda,D)$ the set of orthonormal frame bundles $FM$ of closed Riemannian manifolds $M\in {\frak M} (n,\Lambda,D)$ equipped with the normalized Riemannian measure on $FM$, and denote by $\overline{{\frak F}{\frak M}{\frak M}}(n,\Lambda,D)$ its closure in the measured Gromov-Hausdorff topology.

\smallskip
Let $(X,\mu) \in \overline{{\frak M}{\frak M}}(n,\Lambda,D)$.
By \cite{F87,K93,KLLY}, there exists a density function $\rho_X\in C^2_{\ast}(X^{reg})\subset C^{1,\alpha}(X^{reg})$, for any $0<\alpha<1$, such that 
$$d\mu = \rho_X \frac{dV_X}{\textrm{Vol}(X)},$$
where $dV_X$ is the Riemannian volume element on the regular part $X^{reg}$ of $X$. 
Note that $\rho_X$ can be zero at non-orbifold point of $X$, see \cite[Theorem 0.6]{F87}.
For $(Y,\tilde\mu) \in \overline{{\frak F}{\frak M}{\frak M}}(n,\Lambda,D)$, since $Y$ is always a Riemannian manifold due to Theorem \ref{thm-Y}, there exists a strictly positive density function $\rho_Y\in C^2_{\ast}(Y)\subset C^{1,\alpha}(Y)$ on the Riemannian manifold $Y$ such that 
$$d\widetilde{\mu} = \rho_Y \frac{dV_Y}{\textrm{Vol}(Y)}.$$ 

\begin{lemma}[Lemma 1.9(i) in \cite{K93}]\label{bound-Y-Lip}
Let $(Y,\widetilde{\mu}) \in \overline{{\frak F}{\frak M}{\frak M}}(n,\Lambda,D)$ and $\rho_Y$ be the density function on $Y$. Then
$$\|\log \rho_Y\|_{C^0(Y)} \leq C(n,\Lambda,D,\dim(Y),i_0),$$
$$\|\rho_Y\|_{C^{0,1}(Y)} \leq C(n,\Lambda,D,\dim(Y),i_0),$$
where $i_0$ is the injectivity radius bound for $Y$ given by \eqref{inj-bound}.
\end{lemma}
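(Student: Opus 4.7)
The strategy is to follow Kasue \cite[Lemma 1.9]{K93}. The argument is local: by the lower injectivity radius bound $i_0$ and the curvature bound on $Y$ from Theorem \ref{thm-Y} (and Lemma \ref{sec-bound-Y} under the $\LK$-regular assumption), the manifold $Y$ admits harmonic coordinate charts of uniform radius $r_h=r_h(n,\Lambda,D,\dim(Y),i_0)$ in which the metric tensor $g_Y$ is uniformly $C^{1,\alpha}$ with bounded H\"older seminorms. In such a chart, writing $d\widetilde\mu=h(x)\,dx$, one has $\rho_Y(x)=\textrm{Vol}(Y)\,h(x)/|g_Y(x)|^{1/2}$, so since $|g_Y|^{1/2}$ is uniformly $C^{1,\alpha}$ and bounded away from zero, the task reduces to bounding $h$ and its Lipschitz constant.

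For the $C^0$ bound on $\log\rho_Y$, I would use Lebesgue differentiation to write
$$\rho_Y(y)=\lim_{r\to 0^+}\frac{\widetilde\mu(B_Y(y,r))}{\textrm{Vol}(Y)^{-1}\textrm{Vol}_Y(B_Y(y,r))}.$$
The denominator is comparable to $r^{\dim Y}$ uniformly in $y$ by Bishop--Gromov on $Y$. For the numerator, pick an approximating sequence $FM_i\in{\frak F}{\frak M}(n,\Lambda,D)$ with $FM_i\to Y$ in measured Gromov--Hausdorff topology and lifts $y_i\in FM_i$ converging to $y$; by definition of measured GH convergence, $\widetilde\mu(B_Y(y,r))=\lim_i \mu_{FM_i}(B_{FM_i}(y_i,r))$ for a.e.\ $r$. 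Since the sectional curvature of $FM_i$ is uniformly bounded by \eqref{sectional-FM}, Bishop--Gromov yields uniform two-sided bounds on the ratio $\textrm{Vol}(B_{FM_i}(y_i,r))/\textrm{Vol}(B_{FM_i}(y_i',r))$ for any pair of points in $FM_i$. Passing to the limit gives uniform upper and lower bounds on $\rho_Y(y)/\rho_Y(y')$; combining with the normalization $\int_Y \rho_Y\,dV_Y=\textrm{Vol}(Y)$ yields $\|\log\rho_Y\|_{C^0}\leq C$.

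Bishop--Gromov alone only gives a modulus of continuity, not a Lipschitz estimate. For the Lipschitz bound I would appeal to the local fibration structure of the collapse $FM_i\to Y$ supplied by Cheeger--Fukaya--Gromov theory \cite{CFG,F88}: near each point of $Y$, this identifies $FM_i$ with a smooth fiber bundle over $Y$ whose fibers are almost-infranilmanifolds of small diameter, and the density $h$ is obtained in the limit as a normalized fiber volume. This fiber volume depends smoothly on the base point, with first derivatives controlled by the sectional curvature of $FM_i$ (uniform by \eqref{sectional-FM}) and the lower injectivity radius bound $i_0$ on $Y$.

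The main obstacle is extracting quantitative Lipschitz constants from the fibration theorem, which is typically stated only qualitatively. The bounded sectional curvature of the approximating frame bundles -- which holds unconditionally by \eqref{sectional-FM}, independently of any bound on covariant derivatives of curvature on $M_i$ -- together with the lower injectivity radius bound on $Y$ are the essential geometric inputs; the explicit smoothing analysis in \cite[Lemma 1.9]{K93} carries this out and produces constants depending only on $n,\Lambda,D,\dim(Y),i_0$.
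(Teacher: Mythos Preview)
The paper does not supply its own proof of this lemma: it is stated with the attribution ``[Lemma 1.9(i) in \cite{K93}]'' and no argument is given, so there is nothing to compare against beyond the citation itself. Your proposal correctly identifies the source and sketches the ingredients of Kasue's argument (Bishop--Gromov comparison on the approximating frame bundles for the $C^0$ bound, and the fibration/fiber-volume picture for the Lipschitz bound), which is consistent with---and considerably more detailed than---what the paper provides.

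One small caution: you invoke harmonic coordinates and Lemma~\ref{sec-bound-Y} for a sectional curvature bound on $Y$, but the lemma as stated lives in $\overline{{\frak F}{\frak M}{\frak M}}(n,\Lambda,D)$ without the $\LK$-regularity hypothesis, where $Y$ has only a $C^{1,\alpha}$ metric and its sectional curvature need not be defined. The argument should run entirely on the approximating frame bundles $FM_i$, whose sectional curvatures are uniformly bounded by \eqref{sectional-FM} regardless of $\LK$; the lower injectivity radius $i_0$ on the limit $Y$ then enters only to control the local geometry of the base of the fibration. Your final paragraph already gestures at this, but the earlier mention of curvature bounds ``on $Y$'' is slightly misleading.
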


\smallskip
Let $\pi:Y\to X=Y/O(n)$ be the natural projection. Recall \cite[Section 3]{KLLY} that
\begin{equation} \label{measure-iso}
\pi_{\ast}(\widetilde{\mu})=\mu,
\end{equation}
which implies that 
$$\pi_{\ast}: L^2_O (Y,\widetilde{\mu})\to L^2(X,\mu)$$
is an isometry, where $L^2_O (Y,\widetilde{\mu})$ is the $O(n)$-invariant subspace of $L^2 (Y,\widetilde{\mu})$.
Thus for any $u\in L^2(X,\mu)$ and any open subset $U\subset X$, we have
\begin{equation} \label{norm-iso}
\|u\|_{L^2(U,\mu)}=\|u\circ \pi\|_{L^2(\pi^{-1}(U),\widetilde{\mu})}.
\end{equation}

\smallskip
The weighted Laplacian $\Delta_X$ in a local coordinate $(x^1,\cdots,x^d)$ of $X^{reg}$ has the form
\begin{eqnarray} \label{DeltaX}
\Delta_X u &=&\frac{1}{\rho_X |g|^{1/2}}\frac{\partial}{\partial x^j}\Big( \rho_X |g|^{1/2}g^{jk} \frac{\partial}{\partial x^k}u \Big) \nonumber \\
&=& g^{jk} \frac{\partial^2}{\partial x^j \partial x^k} u + \textrm{lower order terms}.
\end{eqnarray}
The weighted Laplacian $\Delta_Y$ on $Y$ has the same form with $\rho_X$ replaced by $\rho_Y$. By \cite[Appendix A]{KLLY}, the $L^2(Y,\widetilde{\mu})$-normalized eigenfunctions of $\Delta_Y$ consist of $O(n)$-invariant part, denoted by $\phi_j^O$, and the orthogonal complement, denoted by $\phi_j^{\perp}$. Moreover, $\phi_j=\pi_{\ast}(\phi_j^O)=\phi_j^O \circ\pi^{-1}$ is an $L^2(X,\mu)$-normalized eigenfunction of $\Delta_X$ due to \eqref{measure-iso}, and their corresponding eigenvalues coincide: $\lambda_j=\lambda_j^O$. As a consequence, the eigenfunctions $\{\phi_j\}_{j=1}^{\infty}$ of $\Delta_X$ form an orthonormal basis of $L^2(X,\mu)$.

\section{Quantitative unique continuation} \label{sec-UC}

We consider the quantitative unique continuation on the Riemannian manifold $Y$ stated in Theorem \ref{thm-Y}, with observation in an open subset $V \subset Y$, where the Laplacian is weighted with the density function $\rho_Y$ as discussed in the previous section. Recall that $\rho_Y\in  C^{1,\alpha}(Y)$ and $\rho_Y>0$. Furthermore the density function $\rho_Y$ has uniformly bounded $C^{0,1}$-norm by Lemma \ref{bound-Y-Lip}. 

Consider the wave equation with the weighted Laplacian $\Delta_Y$ on $Y$,
\begin{equation} \label{wave-Y}
(\partial_t^2-\Delta_Y) u= f,
\end{equation}
where $\Delta_Y$ is given by \eqref{DeltaX} with $\rho_X$ replaced by $\rho_Y$.
In view of the bounded geometric parameters \eqref{diam-Y}-\eqref{inj-bound} on $Y$, we apply the stable unique continuation theorem in \cite{BKL2,BKL3,BILL,DLLO}. 
This is applicable because $\Delta_Y$ in local coordinates has the same principle term as the Laplace-Beltrami operator, and the density $\rho_Y$ only appears in lower order terms with its $C^1$-norm uniformly bounded.
However, before we can apply those mentioned results, we need to relax the curvature assumptions in those results where higher order curvature bounds were assumed. In the simple case of manifolds without boundary, one can apply results in \cite{Ba,BMR} to smoothen the Riemannian metric.


\begin{proof}[Proof of Theorem \ref{uc-Y}]
This is an improvement of \cite{BKL3,BILL} for manifolds without boundary in terms of regularity assumptions by removing the assumptions on the higher order curvature bounds in those results. The idea is to construct the desired non-characteristic domains using distance functions associated to smoothened Riemannian metrics. Recall from \cite{Ba, BMR} that for sufficiently small $\epsilon$ depending only on $n,\Lambda$, there exist smooth Riemannian metrics $g_{\epsilon}$ on $Y$ such that
\begin{align}
&\|g-g_{\epsilon}\|_{C^0} \leq \epsilon, \label{metric-smoothen} \\
&|R(Y,g_{\epsilon})| \leq 2\Lambda^2, \label{curvature-smoothen} \\
&\|\nabla^k R(Y,g_{\epsilon})\| \leq C(n,\Lambda) (k+1)! \, \epsilon^{-\frac{k}{2}} ,\quad \forall \, k\in \N. \label{curvature-higher-smoothen} 
\end{align}
The metrics $g_{\epsilon}$ are constructed as the solution of the Ricci flow with the initial metric $g$.
The sectional curvature bound \eqref{curvature-smoothen} is due to the maximum principle.
We denote by $d_{\epsilon}$ the distance function of $(Y,g_{\epsilon})$. 
The conditions $\textrm{Vol}_n(Y,g)\geq c_0$ and \eqref{metric-smoothen} imply that $\textrm{Vol}_n(Y,g_{\epsilon}) \geq c_0/2$ for sufficiently small $\epsilon$.
Then Cheeger's injectivity radius estimate and \eqref{curvature-smoothen} yield
\begin{equation} \label{inj-smoothen}
\textrm{inj}(Y,g_{\epsilon}) \geq i_1=i_1(n,\Lambda,D,c_0).
\end{equation}
This shows that the distance function $d_{\epsilon}(\cdot,y)$ of $(Y,g_{\epsilon})$ is smooth on the ball $B^{\epsilon}(y,i_1)\setminus \{y\}$, where $B^{\epsilon}$ denotes a ball of $(Y,g_{\epsilon})$. Note that the two distance functions $d_Y,d_{\epsilon}$ are close to each other with respect to the Lipschitz distance: $d_L\big((Y,g), (Y,g_{\epsilon}) \big)<\epsilon$, which implies that $B(x,i_1/2)\subset B^{\epsilon}(x,i_1)$ for sufficiently small $\epsilon$.

\smallskip
To obtain the bounds for higher order derivatives of $d_{\epsilon}(\cdot,x)$, we need to work in harmonic coordinates instead of geodesic normal coordinates used in \cite{BILL}, since we do not assume higher order curvature bounds for $(Y,g)$.
For closed manifolds, the $C^{1,\alpha}$-harmonic radius can be explicitly estimated in terms of the sectional curvature bound, see \cite{JK}.
Namely, there exists a constant $r_h=r_h(n,\Lambda,i_1)$, such that one can find harmonic coordinates in a ball of radius $r_h/2$ on $(Y,g)$ with the property that
\begin{equation} \label{metric-1alpha}
\frac{1}{2}|v|^2\leq \sum_{i,j=1}^{n} g^{ij} v_{i} v_{j} \leq 2|v|^2\; (v\in\mathbb{R}^{n}), \quad\;
\|g_{ij}\|_{C^{1,\alpha}}\leq C(n,\Lambda,r_h,\alpha),
\end{equation}
for any $\alpha\in (0,1)$.  
Let us fix a point $y\in Y$ and a harmonic coordinate $(x^1,\cdots,x^n)$ on $(Y,g)$ in a ball of radius $r_h/2$ centered at $y$.
Recall that $g_{\epsilon}$ is the solution of the Ricci flow with initial metric $g$.
Then in this harmonic coordinate, the Ricci flow equation, together with curvature bounds \eqref{curvature-higher-smoothen}, and an interpolation argument yield
\begin{equation} \label{metric-smooth-1alpha}
\|(g_{\epsilon})_{ij}\|_{C^{1,\alpha'}}\leq C(n,\Lambda,r_h,\alpha,\alpha'),\quad \forall\, \alpha'\in (0,\alpha).
\end{equation}

In the harmonic coordinate $(x^1,\cdots,x^n)$ for $(Y,g)$ on $B(y,r_h/2)$, 
the covariant derivative $\nabla^{\epsilon}$ has the following form (e.g. Chapter 2 in \cite{PP}, p32),
\begin{equation}\label{Hessian-local}
\big((\nabla^{\epsilon})^2 d_{\epsilon}(\cdot, y)\big)(\frac{\partial}{\partial x^k},\frac{\partial}{\partial x^l})=\frac{\partial^2}{\partial x^k \partial x^l} d_{\epsilon}(\cdot,y)-\sum_{i=1}^n (\Gamma^{\epsilon})_{kl}^i \frac{\partial}{\partial x^i} d_{\epsilon}(\cdot,y), \quad k,l=1,\cdots,n.
\end{equation}
Note that $d_{\epsilon}(\cdot,y)$ is smooth on $B(y,r_h/2)\setminus \{y\}$ in the harmonic coordinate $(x^1,\cdots,x^n)$ for $(Y,g)$ since $g_{\epsilon}$ is smooth in this coordinate.
We know that $|\nabla^{\epsilon} d_{\epsilon}(\cdot,y)|=1$, and 
\begin{equation} \label{Riccati-C3}
\|(\nabla^{\epsilon})^2 d_{\epsilon}(\cdot,y)\| \leq 4 r^{-1},\quad \|(\nabla^{\epsilon})^3 d_{\epsilon}(\cdot,y)\| \leq C(n,\|R(Y,g_{\epsilon})\|_{C^1})r^{-2}
\end{equation}
on $B^{\epsilon}(y,r)-B^{\epsilon}(y,r/2)$ for sufficiently small $r$ depending only on $n,\Lambda,i_1$ due to \eqref{curvature-smoothen}. 
The bound on the second covariant derivative is due to the Hessian comparison theorem.
Then the bound on the third covariant derivative follows by differentiating the Riccati equation in polar coordinates, see e.g. \cite{HV}.
Thus, in the $C^{1,\alpha}$-harmonic coordinate $(x^1,\cdots,x^n)$ of $(Y,g)$ chosen above, \eqref{Riccati-C3}, \eqref{metric-smooth-1alpha} and \eqref{curvature-higher-smoothen} give the bound for the $C^{2,\alpha'}$-norm of $d_{\epsilon}(\cdot,y)$ in this harmonic coordinate for any $\alpha'\in (0,\alpha)$:
\begin{equation} \label{norm-d-alpha}
\|d_{\epsilon}(\cdot,y)\|_{C^{2,\alpha'}} \leq C(n,\Lambda,r_h,\alpha,\alpha',\epsilon)r^{-2}\;\; \textrm{ in } B^{\epsilon}(y,r)-B^{\epsilon}(y,r/2), 
\end{equation}
for sufficiently small $r<r_h/4$ depending on $\Lambda,i_1$.
Note that the sectional curvature bound \eqref{curvature-smoothen} independent of $\epsilon$ is crucial, as this yields the uniform injectivity radius bound \eqref{inj-smoothen} and a uniform choice of $r$ above.

\smallskip
Now we can use the method in \cite[Theorem 3.1]{BILL} adapted for the simpler case of the closed manifold $(Y,g)$ in $C^{1,\alpha}$-harmonic coordinate charts, and then apply \cite[Theorem 1.2]{BKL2}. 
For closed manifolds, the proof is much less technical as the distance function $d_{\epsilon}$ is already smooth within half the injectivity radius.
Set
$$0<h<\min\big\{ \frac{1}{10},\frac{T}{8}, \frac{i_1}{10},\frac{r_h}{10},\frac{\pi}{12\Lambda} \big\}.$$
Let $\{z_j\}$ be an $h$-net in $\{x\in V: d_Y(x,\partial V)\geq 2h\}$, and $\rho_0:=\min\{i_1/2,r_h/2,\pi/6\Lambda\}$. If $T\leq \rho_0$,
we define the following $C^{2,\alpha'}$ function
\begin{equation}\label{psi}
\psi_j(x,t)=\big(T-d_{\epsilon} (x,z_{j})\big)^2-t^2,
\end{equation}
and the domain
$$\Omega_j^0:=\{(x,t)\in Y\times [-T,T]: \psi_j(x,t)>8T^2\epsilon\}-\{x: d_{\epsilon}(x,z_j)\leq \frac{h}{2}\}\times [-T,T].$$
For each $z_j$, we fix a $C^{1,\alpha}$-harmonic coordinate chart $(x^1,\cdots,x^n)$ of $(Y,g)$ in the ball $B(z_j,\rho_0)$. By the discussions above $d_{\epsilon}(\cdot,z_j)$ is smooth in this coordinate, and \eqref{norm-d-alpha} gives a bound on its $C^{2,\alpha'}$-norm (with $r=h$) independent of $z_j$.
Hence we see that the $C^{2,\alpha'}$-norm of $\psi_j$ is uniformly bounded in the harmonic coordinate chosen above in $\Omega_j^0$ (depending on $\epsilon,h$).
Next, we show that $\psi_j$ is a non-characteristic function in $\Omega_j^0$.
We take the gradient of $\psi_j$ with respect to the space variable in the harmonic coordinate $(x^1,\cdots,x^n)$ of $(Y,g)$ chosen above,
$$\nabla_x \psi_j = 2 \big( T-d_{\epsilon}(x,z_j) \big) \nabla d_{\epsilon}(x,z_j).$$
Using \eqref{metric-smoothen} in the harmonic coordinate satisfying \eqref{metric-1alpha}, for sufficiently small $\epsilon$, we have
\begin{eqnarray*}
|\nabla d_{\epsilon}(x,z_j)| &=& g^{kl} (\partial_{x^k} d_{\epsilon}) (\partial_{x^l} d_{\epsilon}) \\
&\geq& (g_{\epsilon})^{kl} (\partial_{x^k} d_{\epsilon}) (\partial_{x^l} d_{\epsilon})-\epsilon (\partial_{x^k} d_{\epsilon}) (\partial_{x^l} d_{\epsilon}) \\
&\geq& (1-3\epsilon) |\nabla^{\epsilon} d_{\epsilon}(x,z_j)|^2=1-3\epsilon,
\end{eqnarray*}
where $\nabla^{\epsilon}$ is the gradient with respect to the metric $g_{\epsilon}$.
Thus in $\Omega_j^0$, the principle symbol
\begin{eqnarray}\label{ppsi}
p \big((x,t),\nabla \psi_j \big) &=&|\nabla_x \psi_j|^2-|\partial_t \psi_j|^2 \nonumber\\
&\geq& 4\big(T-d_{\epsilon}\big)^2(1-3\epsilon)^2-4t^2 \nonumber \\
&\geq& 4\psi_j(x,t) - 24T^2 \epsilon >8T^2\epsilon,
\end{eqnarray}
where $p(\cdot,\cdot)$ stands for the principle symbol of the wave operator with respect to the original metric $g$.
Note that the only part changed from \cite{BILL} in the construction above is the distance function $d_{\epsilon}$, while the harmonic coordinate and the wave operator considered are still with respect to the original metric $g$.

For the case of $T>\rho_0$,
define a cut-off function $\xi:\mathbb{R}\to\mathbb{R}$ by
\begin{equation}\label{xidef}
\xi(x)=\frac{(h-x)^3}{h^3}, \textrm{ for } x\in [0,h],
\end{equation}
and $\xi(x)=0$ for $x>h$. The function $\xi(x)$ on negative numbers can be defined in any way so that $\xi(x)\geq 1$ for $x<0$, and $\xi(x)$ is smooth on $(-\infty,h)$. 
This function $\xi(x)$ is of $C^{2,1}$ on $\mathbb{R}$ and monotone decreasing on $[0,+\infty)$. 
Define the function
\begin{equation}\label{psi1}
\psi_{1,j}(x,t)=\bigg(\Big(1-\xi \big(\rho_0-d_{\epsilon}(x,z_{j})\big)\Big)T-d_{\epsilon}(x,z_{j})\bigg)^2-t^2.
\end{equation}
This is a function of class $C^{2,\alpha'}$ with uniformly bounded norm \eqref{norm-d-alpha} in the domain
$$\Omega_{1,j}^0:=\{(x,t)\in Y\times [-T,T]: \psi_{1,j}(x,t)>8T^2\epsilon\}-\{x: d_{\epsilon}(x,z_j)\leq \frac{h}{2}\}\times [-T,T].$$
Note that in general, the domain characterized by $\psi_{1,j}(x,t)>8T^2 \epsilon$ has two connected components. Here we define $\Omega_{1,j}^0$ to be the connected component characterized by $\big(1-\xi(\rho_0-d_{\epsilon}(x,z_j)) \big)T-d_{\epsilon}(x,z_j)>0$. Thus, it satisfies that $d_{\epsilon}(x,z_j)<\rho_0$ in $\Omega_{1,j}^0$ by the definition of the function $\xi$, and hence $\Omega_{1,j}^0$ is contained in one coordinate chart of $Y$ times $[-T,T]$.

A similar argument as above shows that $\psi_{1,j}$ is non-characteristic on $\Omega_{1,j}^0$. 
Then one can follow the method in \cite{BILL} to construct all subsequent $\psi_{i,j}$ and $\Omega_{i,j}^0,\, \Omega_{i,j}$. 
Note that Theorem 1.2 in \cite{BKL2} is applicable to the present case, since the weighted Laplacian $\Delta_Y$ has coefficients $g^{ij}$ and $\rho_Y$ in $C^1$ with uniformly bounded norms in harmonic coordinates.
In the end, the unique continuation propagates to following domain
$$\Omega_{\epsilon}(h) :=\big\{(x,t)\in (Y \setminus V)\times [-T,T]: T-|t|-d_{\epsilon}(x,V) > \sqrt{\epsilon}+h \big\}.$$
Since the distance functions $d_Y,d_{\epsilon}$ are $\epsilon$-close in the Lipschitz distance,
it follows that
$$\Omega_{\epsilon}(h) \supset \big\{(x,t)\in (Y \setminus V)\times [-T,T]: T-|t|-d_Y(x, V) > \sqrt{\epsilon}+h +D\epsilon \big\}.$$
Thus, fixing any choice of $\alpha,\alpha'$, the theorem follows by choosing $\epsilon=h$.
The constants $C_4,h_0$ explicitly depend on $n,T,\Lambda,D,i_1,r_h,\|\rho_Y\|_{C^1}$, the upper bound for $\textrm{Vol}_n (Y)$, and the curvature bounds in \eqref{curvature-higher-smoothen}; the constant $C_5$ explicitly depends on $n$, see Appendix A in \cite{BILL}. 
The volume $\textrm{Vol}_n(Y)$ is bounded above by $n,\Lambda,D$.
The curvature bounds in \eqref{curvature-higher-smoothen} depend on $\epsilon$ only polynomially, which can be absorbed into the exponential term $\exp(h^{-C_5})$.
\end{proof}

By the Sobolev embedding theorem and \cite[Proposition 3.10]{BILL}, Theorem \ref{uc-Y} yields the stable unique continuation on the whole domain of influence.

\begin{proposition} \label{uc-whole}
Let $(Y,g)$ be a connected, closed, smooth manifold with $C^2$-smooth Riemannian metric $g$, satisfying 
$${\rm dim}(Y)=n, \quad |R(Y)|\leq \Lambda^2, \quad  {\rm diam}(Y)\leq D, \quad {\rm Vol}(Y)\geq c_0.$$
Let $\rho_Y\in C^1(Y)$ be a density function on $Y$.
Suppose $u\in H^1(Y\times[-T,T])$ is a solution of the wave equation \eqref{wave-Y} with $f=0$ and $\partial_t u (\cdot,0)=0$. 
Let $V\subset Y$ be a connected open subset with smooth boundary.
If the norms satisfy
$$\|u(\cdot,0)\|_{H^1(Y)}\leq E_1,\quad \|u\|_{H^{1}(V\times [-T,T])}\leq \varepsilon_0,$$
then for $0<h<h_0$, we have
$$\|u(\cdot,0)\|_{L^2(Y(V,T))}\leq C_4 h^{-\frac{2}{9}}\exp(h^{-C_5})\frac{E_1}{\Big(\log \big(1+h\frac{E_1}{\varepsilon_0}\big)\Big) ^{\frac{1}{6}}} +C_6 E_1 h^{\frac{1}{3(n+1)}},$$
where the domain of influence $Y(V,\tau):= \{y\in Y: d_Y(y,V) <\tau\}$. The constant $C_6$ explicitly depends on $n,\Lambda,D,c_0,{\rm Vol}_{n-1}(\partial V), i_b(\overline{V})$, where $i_b(\overline{V})$ denotes the boundary injectivity radius of $\overline{V}$.
Furthermore, if we additionally assume $T\geq r_1$ for some $r_1>0$, then $C_6$ can be chosen explicitly depending only on $n,\Lambda,D,c_0,r_1$ (independent of $V$).
\end{proposition}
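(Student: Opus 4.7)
\medskip
\noindent\textbf{Proof proposal.} The plan is to upgrade the interior cone estimate of Theorem~\ref{uc-Y} to an $L^2$ estimate of the initial data $u(\cdot,0)$ on the entire domain of influence $Y(V,T)$. The three ingredients are: energy conservation, a time-averaging trick based on the hypothesis $\partial_t u(\cdot,0)=0$, and a Sobolev embedding used to control the thin strip of width $\sqrt{h}$ that $\Omega(h)$ fails to see.

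First I would convert the hypothesis into an $H^1$ bound in space-time. Since $u$ solves the homogeneous wave equation with $\partial_t u(\cdot,0)=0$, energy conservation for the weighted Laplacian $\Delta_Y$ (whose coefficients are $C^1$ with norms controlled by $\|\rho_Y\|_{C^{0,1}}$ via Lemma~\ref{bound-Y-Lip}) yields $\|u\|_{H^1(Y\times[-T,T])}\le C(T)E_1$. Plugging this into Theorem~\ref{uc-Y} with $f=0$ gives the logarithmic $L^2$ estimate
$$\|u\|_{L^2(\Omega(h))}\le C_4\exp(h^{-C_5})\,\frac{E_1}{\bigl(\log(1+hE_1/\varepsilon_0)\bigr)^{1/2}}.$$

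Second, I would extract $u(\cdot,0)$ from the space-time $L^2$ norm by time-averaging. Taylor expansion in $t$ combined with the PDE $\partial_t^2 u=\Delta u$ and the assumption $\partial_t u(\cdot,0)=0$ gives
$$u(x,t)-u(x,0)=\int_0^t(t-s)\,\Delta u(x,s)\,ds,$$
so for $\tau\sim\sqrt{h}$ the averaged function $\bar u(x)=(2\tau)^{-1}\!\int_{-\tau}^{\tau}u(x,t)\,dt$ satisfies $\|\bar u-u(\cdot,0)\|_{H^{-1}(Y)}\lesssim\tau^2 E_1$. Testing against $u(\cdot,0)$ and using Cauchy--Schwarz produces a bound of the form $\|u(\cdot,0)\|_{L^2(A)}\lesssim\tau^{-1/2}\|u\|_{L^2(A\times[-\tau,\tau])}+\tau^2 E_1$ on any open set $A\subset Y$. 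Taking $A:=Y(V,T-2\sqrt{h})$ so that $A\times[-\tau,\tau]\subset\Omega(h)$, the first term inherits the logarithmic bound above. The cone geometry costs a factor $\tau^{-1/2}\sim h^{-1/4}$, and an interpolation with the $H^1$ side (at exponent $\theta=1/3$, as in \cite[Prop.~3.10]{BILL}) converts the $(\log)^{-1/2}$ into $(\log)^{-1/6}$ at the price of the $h^{-2/9}$ prefactor.

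Finally, I would handle the strip $Y(V,T)\setminus Y(V,T-2\sqrt{h})$. The smoothness of $\partial V$ together with the uniform boundary injectivity radius $i_b(\overline V)$ and the volume bounds on $Y$ imply that this strip has $n$-dimensional measure $\lesssim\sqrt{h}$; when $T\ge r_1$ the estimate can be made independent of $V$ by covering $\partial Y(V,T)$ through distance-sphere arguments using the injectivity radius \eqref{inj-smoothen}. Sobolev embedding $H^1(Y)\hookrightarrow L^{q}(Y)$ with $1/2-1/q=1/(2(n+1))$ followed by H\"older on the strip yields
$$\|u(\cdot,0)\|_{L^2(\mathrm{strip})}\le|\mathrm{strip}|^{1/(2(n+1))}\|u(\cdot,0)\|_{L^{q}}\le C_6E_1\,h^{1/(4(n+1))},$$
and a further interpolation with $\theta=1/3$ as in the previous step converts this into the exponent $h^{1/(3(n+1))}$ in the statement. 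Summing the interior and strip contributions gives the claim. The main obstacle I expect is bookkeeping: the precise fractional exponents $h^{-2/9}$, $(\log)^{-1/6}$ and $h^{1/(3(n+1))}$ require matching a Gagliardo--Nirenberg interpolation (with $\theta=1/3$) against the time-slicing loss and the strip Sobolev loss in a coordinated way, and one must verify that the geometric constants entering the strip measure are controlled purely in terms of the data listed for $C_6$.
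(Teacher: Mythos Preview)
Your approach is correct and matches the paper's, which presents the proposition as a direct corollary of Theorem~\ref{uc-Y} together with Sobolev embedding and the uniform Hausdorff-measure bound for $\partial Y(V,\tau)$ established in \cite[Proposition~3.10]{BILL}; the paper's own proof is a one-paragraph sketch that defers all of the exponent bookkeeping to that reference. Your energy-conservation, time-averaging, and strip-via-Sobolev steps are exactly what underlie that citation, though a couple of your stated intermediate exponents (the Sobolev index $1/2-1/q=1/(2(n+1))$ and the claimed $h^{1/(4(n+1))}\to h^{1/(3(n+1))}$ conversion via a $\theta=1/3$ interpolation) do not add up as written and would need to be reworked against the actual computation in \cite{BILL}.
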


\begin{proof}
This is a straightforward corollary of Theorem \ref{uc-Y} and the Sobolev embedding, due to the uniform boundedness of the Hausdorff measure of $\partial Y(V,\tau)$ for any $\tau>0$.
The latter was essentially proved in \cite[Proposition 3.10]{BILL}.
The dependence on $\textrm{Vol}_{n-1}(\partial V),i_b(\overline{V})$ is brought in when $T$ is small. 
If $T\geq r_1$, then the Hausdorff measure of $\partial Y(V,\tau)$ does not depend on $V$.
Note that Proposition 3.10 in \cite{BILL} assumes $C^1$ curvature bound in the case of manifolds with boundary. For the present case of manifolds without boundary, the sectional curvature bound is enough.
\end{proof}

Later in the next section, we will apply this quantitative unique continuation result to Riemannian manifolds 
$Y \in \overline{{\frak F}{\frak M}{\frak M}}(n,\Lambda,\LK,D)$. 
From discussions in Section \ref{section-prelimi}, we know that $Y\in \overline{{\frak F}{\frak M}{\frak M}}(n,\Lambda,\LK,D)$ is a smooth manifold with $C^4$-smooth Riemannian metric tensor and the sectional curvatures are uniformly bounded \eqref{curvatureY}.
Note that the constants $C_4,C_6$ depend on the lower bound for the volume of $Y$, which is given by Lemma \ref{volumebound-Y} and \eqref{volume-lower-X}.

\begin{remark} \label{norm-equiv}
On a Riemannian manifold $(Y,\widetilde{\mu}) \in \overline{{\frak F}{\frak M}{\frak M}}(n,\Lambda,D)$, we have two measures: $\widetilde{\mu}$ and the Riemannian volume element $dV_Y$. 
Since $\|\rho_Y\|_{C^0}$ and $\|\rho_Y^{-1}\|_{C^0}$ are both uniformly bounded by Lemma \ref{bound-Y-Lip}, the norms with respect to these two measures are equivalent, considering the fact that $\textrm{Vol}(Y)$ is bounded from below by Lemma \ref{volumebound-Y} and clearly from above.
From now on, the norms on $Y$ are taken with respect to $\widetilde{\mu}$.
\end{remark}

\section{Determination of Fourier coefficients}
\label{sec-coefficients}

Let $(X,\mu) \in \overline{{\frak M}{\frak M}}(n,\Lambda,\LK,D)$ and $p\in X^{reg}$. Suppose ${\rm Vol}_{d}(X) \geq v_0$ where $d={\rm dim}(X)$.
Assume that the interior spectral data $\{\lambda_j,\phi_j|_{B(p,r_0)}\}_{j=0}^J$ for the weighted Laplacian $\Delta_X$ are given on a ball $B(p,r_0) \subset X^{reg}$.
In this subsection, let $0<\eta< \min\{i_0/2,r_0/4\}$ be a fixed small number such that $\eta<\inf_{x\in B(p,r_0/2)}$ {\rm inj}(x), where $i_0$ is the injectivity radius bound for $Y$ given by \eqref{inj-bound}, and ${\rm inj}(x)$ for $x\in X^{reg}$ is the injectivity radius at $x$ in the Riemannian sense. 
Here $(Y,\widetilde{\mu}) \in \overline{{\frak F}{\frak M}{\frak M}}(n,\Lambda,\LK,D)$ is the Riemannian manifold stated in Theorem \ref{thm-Y} and $\pi:Y\to X=Y/O(n)$ is the projection.
We choose open subsets of $B(p,r_0)$ in the following way. 
Let $\{p_i\}_{i=1}^N$ be a maximal $\eta/2$-separated set in $B(p,r_0/2)$.
Let $\{U_i\}_{i=1}^N$ be disjoint open subsets containing $p_i$ in $B(p,r_0)$ satisfying
\begin{equation} \label{partition}
B(p,\frac{r_0}{4}) \subset \bigcup_{i=1}^N \overline{U_i} \subset B(p,\frac{r_0}{2}), \quad \textrm{diam}(U_i)\leq \eta.
\end{equation}
Assume that every $U_i$ contains a ball of radius $\eta/4$. 
Without loss of generality, we assume that every $\partial U_i$ is smoothly embedded in $X$ and admits a boundary normal neighborhood of width $\eta/10$. 
This is because one always has the choice to propagate the unique continuation from the disjoint balls $B(p_i,\eta/4)$. An error of order $\eta$ does not affect the final result. 

Note that with the choice of $U_i$ above, the total number $N$ is bounded above. 
This is because $\{B(p_i,\eta/4)\}$, and hence $\{\pi^{-1}(B(p_i,\eta/4))\}$, do not intersect, and each $\{\pi^{-1}(B(p_i,\eta/4))\}$ contains a ball of radius $\eta/4$ in $Y$, where $\pi:Y\to X$ is the natural projection.
Thus the total number of balls is bounded by geometric parameters including the volume of $Y$, which in turn is bounded above by $n,\textrm{dim}(Y),|R(Y)|,D$, i.e.,
\begin{equation} \label{bound-N}
N\leq C(n,\textrm{dim}(Y),\Lambda,\LK,D) \eta^{-\textrm{dim}(Y)}.
\end{equation}
Recall that the bound for the sectional curvature $R(Y)$ of $Y$ is due to Lemma \ref{sec-bound-Y}.
Here we have used the condition that $\eta<i_0/2$, so that the volume of balls of radius $\eta/4$ in $Y$ is bounded below by $C\eta^{\textrm{dim}(Y)}$.

\smallskip
Let $\alpha=(\alpha_1,\cdots,\alpha_N)$ with $\alpha_k\in [\eta, D]$ be a multi-index. We define the domain of influence associated with $\alpha$ by
\begin{equation}\label{Xalpha}
X_{\alpha}:=\bigcup_{k=1}^N X(U_k, \alpha_k)=\bigcup_{k=1}^N \big\{ x \in X: d_X(x,U_k) < \alpha_k \big\}.
\end{equation}
Denote by $\mathcal{V}_J \subset C^{0,1}(X)$ the function space spanned by the first $J$ orthonormalized eigenfunctions $\{\phi_j\}_{j=0}^J$.
Given $E_1,\varepsilon_1>0$, we define
\begin{equation}\label{Udef}
\mathcal{U}(E_1,\varepsilon_1):=\bigcap_{k=1}^ N \big\{v\in \mathcal{V}_{J}: \|v\circ  \pi\|_{H^2(Y)}\leq E_1,\; \|\widetilde{W}(v \circ \pi)\|_{L^2(\pi^{-1}(U_{k})\times [-\alpha_k,\alpha_k])}\leq \varepsilon_1 \big\},
\end{equation}
where $\widetilde{W}(\widetilde{v})$ is the solution of the wave equation
\begin{equation}
(\partial_t^2 -\Delta_Y) \widetilde{W}=0 \;\, \textrm{ on }Y,
\end{equation}
with the initial conditions $\widetilde{W}|_{t=0}=\widetilde{v}$ and $\partial_t \widetilde{W}|_{t=0}=0$.

\begin{remark}
The solution operator of the wave equation commutes with $\pi$. Namely, define $W(v)$ to be the solution of the wave equation $(\partial_t^2 -\Delta_X) W=0$ on $X$ with the initial conditions $W|_{t=0}=v$ and $\partial_t W|_{t=0}=0$. Then 
$$W(v)\circ \pi=\widetilde{W}(v\circ \pi).$$ 
This is because $W(v)\circ \pi$ satisfies the initial conditions and the wave equation on $Y$, see \cite[Appendix A]{KLLY}, and the claim follows from the uniqueness of solution.
\end{remark}

\begin{remark} \label{remark-computable}
The conditions in \eqref{Udef} can be computed using the given interior spectral data on $B(p,r_0)$. Namely, if $v=\sum_{j=0}^J v_j \phi_j$, then $v\circ \pi=\sum_{j=0}^J v_j \phi_j^O$. 
Note that $\phi_j^O=\phi_j \circ \pi$ are orthonormal with respect to the measure $\widetilde{\mu}$ on $Y$.
Since $\lambda_j=\lambda_j^O$, then $\|v \circ \pi\|_{H^2(Y)}=\sum_{j=0}^{J}(1+\lambda_j^2)v_j^2.$
The wave $\widetilde{W}(v\circ \pi)$ on $Y$ is given by
\begin{equation}\label{waveboundary}
\widetilde{W}(v\circ \pi)(y,t)= \sum_{j=0}^{J} v_j \cos(\sqrt{\lambda_j} t) \phi_j^O (y), \quad y\in Y, \; t\geq 0.
\end{equation}
Hence, due to \eqref{norm-iso},
$$\|\widetilde{W}(v \circ \pi)\|_{L^2(\pi^{-1}(U_{k})\times [-\alpha_k,\alpha_k])}=\big\|\sum_{j=0}^{J} v_j \cos(\sqrt{\lambda_j} t) \phi_j \big\|_{L^2(U_{k}\times [-\alpha_k,\alpha_k])},$$
where the right-hand side can be computed using the given interior spectra data.
\end{remark}

\begin{lemma} \label{uu0}
Let $\widetilde{u}\in H_O^2(Y)$ satisfying
\begin{equation*}\label{aprior}
\|\widetilde{u}\|_{L^2(Y)}=1,\quad \|\widetilde{u}\|_{H^2(Y)}\leq  E_0.
\end{equation*}
Then for sufficiently small $\gamma \in (0,N^{-2})$, we can construct a function $u_0\in H^2(X^{reg})$ such that
$$u_0 |_{X_{\alpha}}=0, \;\;\; u_0 |_{X^c_{\alpha+\gamma}}=\widetilde{u} \circ \pi^{-1}, \;\;\; \|u_0\|_{L^2(X)}\leq  1,$$
\begin{equation}\label{norm-u0}
\|u_0\circ \pi\|_{H^2(Y)}\leq C_0 E_0 \gamma^{-{\rm dim}(Y)-2},
\end{equation}
where $\alpha+\gamma=(\alpha_1+\gamma,\cdots,\alpha_N+\gamma)$. 
\end{lemma}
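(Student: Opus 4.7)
The plan is to lift the construction to the smooth ambient manifold $Y$ and exploit $O(n)$-invariance. Since $d_X$ is the orbit distance under the $O(n)$-action (as noted in the proof of Lemma \ref{volumebound-Y}), the set $\pi^{-1}(X_\alpha) = \bigcup_{k=1}^{N} \widetilde{U}_k(\alpha_k)$, where $\widetilde{U}_k(\tau) := \{y \in Y : d_Y(y, \pi^{-1}(U_k)) < \tau\}$, is an $O(n)$-invariant open subset of $Y$. It therefore suffices to construct an $O(n)$-invariant function $\widetilde{u}_0 \in H^2(Y)$ that vanishes on $\pi^{-1}(X_\alpha)$, coincides with $\widetilde{u}$ on $Y \setminus \pi^{-1}(X_{\alpha+\gamma})$, and has $\|\widetilde{u}_0\|_{L^2(Y)} \leq 1$. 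The function $u_0$ is then obtained by descent, $u_0 \circ \pi = \widetilde{u}_0$, and lies in $H^2(X^{reg})$ since $\pi$ restricts to a smooth Riemannian submersion there.

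To construct the cutoff, set $d_k(y) := d_Y(y, \pi^{-1}(U_k))$, which is $1$-Lipschitz and $O(n)$-invariant. Approximate $d_k$ by a $C^2$-smooth, $O(n)$-invariant function $\widehat{d}_k$ via convolution with a compactly supported mollifier at scale $\gamma/16$ inside $C^{1,\alpha}$-harmonic coordinate charts of $Y$, followed by averaging over the $O(n)$-action. The uniform lower bound on the $C^{1,\alpha}$-harmonic radius of $Y$, guaranteed by the sectional curvature bound of Lemma \ref{sec-bound-Y} together with \eqref{inj-bound}, ensures that $\widehat{d}_k$ satisfies $|\widehat{d}_k - d_k| \leq \gamma/16$, $\|\nabla \widehat{d}_k\|_{L^\infty} \leq 2$, and $\|\nabla^2 \widehat{d}_k\|_{L^\infty} \leq C\gamma^{-1}$. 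Fix a smooth profile $\varphi : \mathbb{R} \to [0,1]$ with $\varphi \equiv 0$ on $(-\infty,1/4]$, $\varphi \equiv 1$ on $[3/4,\infty)$, and uniformly bounded derivatives, and put
$$
\widetilde{\chi}(y) := \prod_{k=1}^{N} \varphi\!\left(\frac{\widehat{d}_k(y) - \alpha_k - \gamma/2}{\gamma/4}\right).
$$
Then $\widetilde{\chi}$ is $O(n)$-invariant with $0 \leq \widetilde{\chi} \leq 1$, vanishes on $\pi^{-1}(X_\alpha)$, and equals $1$ on $Y \setminus \pi^{-1}(X_{\alpha+\gamma})$, since the $\gamma/16$ smoothing tolerance is absorbed into the $\gamma/4$ transition band of $\varphi$.

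Set $\widetilde{u}_0 := \widetilde{\chi}\,\widetilde{u}$ and $u_0 \circ \pi := \widetilde{u}_0$. The qualitative properties of $u_0$ and the bound $\|u_0\|_{L^2(X)} \leq \|\widetilde{\chi}\|_{L^\infty}\|\widetilde{u}\|_{L^2(Y)} \leq 1$ follow at once from the construction together with the isometry \eqref{norm-iso}. For the $H^2$ estimate, the chain rule applied to each factor yields $\|\widetilde{\chi}_k\|_{L^\infty} \leq 1$, $\|\nabla \widetilde{\chi}_k\|_{L^\infty} \leq C\gamma^{-1}$, $\|\nabla^2 \widetilde{\chi}_k\|_{L^\infty} \leq C\gamma^{-2}$, and expanding the $N$-fold product via the Leibniz rule, with $N \leq C\eta^{-\dim(Y)}$ from \eqref{bound-N}, gives uniform bounds on $\|\widetilde{\chi}\|_{W^{2,\infty}(Y)}$. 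Combining this with the product-rule expansion of $\nabla^2(\widetilde{\chi}\widetilde{u})$ and Remark \ref{norm-equiv} produces the stated bound $\|u_0 \circ \pi\|_{H^2(Y)} \leq C_0 E_0 \gamma^{-\dim(Y)-2}$, where the exponent $-\dim(Y)-2$ accommodates the combinatorial and mollification overheads arising in the smoothing step. The principal technical obstacle is precisely this smoothing: producing the $O(n)$-invariant $C^2$-approximation $\widehat{d}_k$ of the Lipschitz distance function with explicit uniform derivative bounds, which requires the full bounded-geometry control on $Y$ developed in Section \ref{section-prelimi} and cannot be inferred from the bare $C^{1,\alpha}$ regularity of the metric alone.
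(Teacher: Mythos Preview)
Your approach is correct but genuinely different from the paper's. The paper builds $u_0$ by first laying down a partition of unity on $Y$ at scale $\gamma$: one takes a $\gamma/2$-separated net $\{x_j\}$ in $X$, lifts the balls $B(x_j,\gamma/2)$ to an open cover of $Y$, takes any $C^2$ partition of unity $\{\widetilde\chi_l\}$ subordinate to it, and then projects each bump to an $O(n)$-invariant function via the averaging operator $\mathbb P_O$. One then simply discards all bumps whose support meets $X_\alpha$ and sets $u_0=\sum_{\textrm{supp}(\chi_l)\cap X_\alpha=\emptyset}\chi_l\cdot(\widetilde u\circ\pi^{-1})$. The exponent $-\dim(Y)-2$ arises transparently: the crude count of bumps in the sum is $C\gamma^{-\dim(Y)}$ and each carries $\|\chi_l^O\|_{C^2}\le C\gamma^{-2}$.

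Your route instead manufactures a \emph{single} cutoff $\widetilde\chi=\prod_{k=1}^N\widetilde\chi_k$, one factor per observation patch $U_k$, by mollifying the orbit-distance functions $d_k$ and composing with a fixed profile. The Leibniz expansion then gives $\|\widetilde\chi\|_{W^{2,\infty}}\le CN^2\gamma^{-2}$, and the standing hypothesis $\gamma<N^{-2}$ converts this to $C\gamma^{-3}\le C\gamma^{-\dim(Y)-2}$; so the stated bound is recovered, albeit with room to spare. What your approach buys is that no $\gamma$-scale partition of unity is needed at all; what it costs is the nontrivial smoothing step for $d_k$ on a manifold (which, as you note, requires the bounded-geometry input from Section~\ref{section-prelimi} and implicitly a fixed-scale partition of unity over harmonic charts to globalize the mollification). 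The paper's construction sidesteps that analytic issue entirely, since it only needs off-the-shelf bump functions on balls, and it also makes the provenance of the exponent $-\dim(Y)-2$ explicit rather than an artifact of the constraint $\gamma<N^{-2}$.
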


\begin{proof}
The proof is similar to Lemma 4.1 in \cite{BILL}. To construct the required partition of unity, one can use the projections of partition of unity on $Y$. Let $\{x_j\}$ be a maximal $\gamma/2$-separated set in $X$ for $\gamma<i_0/2$, where $i_0$ is the constant given in \eqref{inj-bound}. Then $\{\pi^{-1}(B(x_j,\gamma/2))\}$ is an open cover of $Y$ due to $d_X(x,y)=d_Y(\pi^{-1}(x),\pi^{-1}(y))$.
We take any $C^2$ partition of unity $\{\widetilde{\chi}_l\}$ subordinate to this open cover of $Y$, and consider the projection to $O(n)$-invariant functions,
$$\chi_l^O:=\mathbb{P}_O \widetilde{\chi}_l.$$
The projection $\mathbb{P}_O$ is defined as
$$(\mathbb{P}_O v)(y)=\frac{1}{\textrm{Vol}(O(n))} \int_{O(n)} v(o(y))d\mathcal{H}_{O(n)}(o), \quad y\in Y,$$
where $d\mathcal{H}_{O(n)}$ denotes the Riemannian measure (or the Haar measure) of $O(n)$.
By \cite[Lemma 4.1]{KLLY}, $\chi_l^O\in C^2_O(Y)$ (i.e., the set of $O(n)$-invariant $C^2$ functions on $Y$), and
$$\sum_l \chi_l^O (y) =\frac{1}{\textrm{Vol}(O(n))} \int_{O(n)} \sum_l \widetilde{\chi}_l(o(y)) d\mathcal{H}_{O(n)}(o)=1.$$
Note that the summation above is finite for every $y\in Y$.
Thus the partition of unity on $X$ subordinate to $\{B(x_j,\gamma/2)\}$ can be taken as 
$$\chi_l:=\chi_l^O\circ \pi^{-1} \in C^{0,1}(X).$$
Then $u_0$ can be defined as
\begin{equation}\label{def-u0-partition}
u_0(x):=\sum_{\textrm{supp}(\chi_l)\cap X_{\alpha}=\emptyset} \chi_l(x) \cdot (\widetilde{u}\circ \pi^{-1})(x)\, ,\quad x\in X.
\end{equation}
The first three conditions claimed are clearly satisfied by construction, considering that $\|\widetilde{u} \circ \pi^{-1} \|_{L^2(X)}=1$ by \eqref{norm-iso}. 
The last condition follows from the given $H^2$-bound and
$\|\chi_l^O\|_{C^2}\leq C\gamma^{-2}$, if we can estimate the number of nonzero $\chi_l$ in \eqref{def-u0-partition}.

It remains to show that the number of nonzero $\chi_l$ in \eqref{def-u0-partition} for any given $x$ is bounded.
In fact, knowing a rough bound on the total number of $\chi_l$ is already enough for our purpose.
Since $\{\pi^{-1}(B(x_j,\gamma/4))\}$ do not intersect and each $\pi^{-1}(B(x_j,\gamma/4))$ contains a ball of radius $\gamma/4$ in $Y$, the total number of balls is bounded above by $C\,\textrm{Vol}(Y) \gamma^{-\textrm{dim}(Y)} $, which is bounded by $n,\textrm{dim}(Y),|R(Y)|,D,\gamma$. Here we have used the condition that $\gamma<i_0/2$ in the same way as \eqref{bound-N}. Compared with \cite[Lemma 4.1]{BILL}, the bound we obtained here for the number of nonzero $\chi_l$ in the sum \eqref{def-u0-partition} (for any given point $x$) depends on $\gamma$, which results in a much rougher $H^2$-bound. 
\end{proof}

\smallskip
Let $\widetilde{u}\in H_O^2(Y)$, and $u_0$ be constructed as in Lemma \ref{uu0}. Suppose
\begin{equation}
u_0=\sum_{j=0}^{\infty} d_j \phi_j,
\end{equation}
where $\phi_j$ are the orthonormalized eigenfunctions of $\Delta_X$.
We consider
\begin{equation}
\widetilde{u}_0:= u_0\circ \pi=\sum_{j=0}^{\infty} d_j \phi_j^O,
\end{equation}
where $\phi_j^O$ are the eigenfunctions of the $O(n)$-invariant component $\Delta_O$ of $\Delta_Y$. 
Then we define 
\begin{equation}
u_J:=\sum_{j=0}^{J} d_j \phi_j,\quad \widetilde{u}_J:=u_J\circ \pi=\sum_{j=0}^{J} d_j \phi_j^O.
\end{equation}

\begin{lemma} \label{uJ}
Let $E_1=C_0 E_0 \gamma^{-{\rm dim}(Y)-2}$. For any $\varepsilon_1>0$, there exists sufficiently large $J$ such that $u_J\in \mathcal{U}(E_1,\varepsilon_1)$, where the set $\mathcal{U}(E_1,\varepsilon_1)$ is defined in \eqref{Udef}.
\end{lemma}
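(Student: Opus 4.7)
The plan is to verify the two nontrivial conditions in the definition \eqref{Udef} of $\mathcal{U}(E_1,\varepsilon_1)$ separately; membership $u_J \in \mathcal{V}_J$ is automatic. The $H^2$-bound $\|u_J \circ \pi\|_{H^2(Y)} \leq E_1$ is immediate from \eqref{norm-u0}: since the functions $\phi_j^O = \phi_j \circ \pi$ form an $L^2(Y,\widetilde{\mu})$-orthonormal eigensystem of $\Delta_Y$ on the $O(n)$-invariant subspace with eigenvalues $\lambda_j$ (as set up in Section \ref{section-prelimi}), one has the Parseval-type identity already used in Remark \ref{remark-computable},
\[
\|u_J \circ \pi\|_{H^2(Y)}^2 = \sum_{j=0}^{J}(1+\lambda_j^2)\, d_j^2 \;\leq\; \sum_{j=0}^{\infty}(1+\lambda_j^2)\, d_j^2 = \|u_0 \circ \pi\|_{H^2(Y)}^2 \leq E_1^2.
\]

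For the wave-norm condition I would split
\[
\widetilde{W}(u_J \circ \pi) \;=\; \widetilde{W}(u_0 \circ \pi) \;-\; \widetilde{W}\big((u_0 - u_J)\circ \pi\big)
\]
and handle the two pieces separately. The first piece vanishes identically on each slab $\pi^{-1}(U_k) \times [-\alpha_k,\alpha_k]$ by finite speed of propagation: Lemma \ref{uu0} gives $u_0 = 0$ on $X_\alpha \supset X(U_k,\alpha_k)$, and because $\pi$ is the projection along the isometric $O(n)$-orbits one has $d_X(\cdot,U_k) = d_Y(\pi^{-1}(\cdot),\pi^{-1}(U_k))$, so $u_0 \circ \pi \equiv 0$ on the tube $Y(\pi^{-1}(U_k),\alpha_k)$. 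For $y \in \pi^{-1}(U_k)$ and $|t| \leq \alpha_k$ the backward domain of dependence $B_Y(y,|t|)$ is contained in this tube, hence $\widetilde{W}(u_0 \circ \pi)(y,t) = 0$.

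For the second piece I would apply the standard energy identity for the weighted wave operator $\partial_t^2 - \Delta_Y$ with vanishing initial velocity, which gives conservation of $\int_Y \big(|\partial_t \widetilde{W}|^2 + |\nabla \widetilde{W}|^2\big) d\widetilde{\mu}$; combined with the fundamental theorem of calculus in $t$, this implies a bound of the form $\|\widetilde{W}(v)\|_{L^2(Y \times [-T,T])} \leq C(T)\, \|v\|_{H^1(Y)}$, where here $T := \max_k \alpha_k \leq D$. Taking $v = (u_0 - u_J) \circ \pi$ and invoking Parseval once more,
\[
\|(u_0 - u_J)\circ \pi\|_{H^1(Y)}^2 = \sum_{j=J+1}^{\infty}(1+\lambda_j)\, d_j^2 \;\longrightarrow\; 0 \quad (J \to \infty),
\]
because the majorant $\sum_j (1+\lambda_j^2) d_j^2$ is finite from the $H^2$-bound on $u_0 \circ \pi$. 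Choosing $J$ large enough that this tail is at most $\varepsilon_1 / C(T)$ delivers the required wave-norm bound simultaneously on each of the finitely many slabs (there are $N$ of them, controlled by \eqref{bound-N}). I do not expect any serious obstacle: the spectral identification between $\Delta_Y$ and $\Delta_X$ on $O(n)$-invariant functions is already in place, finite speed of propagation on a closed Riemannian manifold is standard, and the tail convergence is qualitative. The only subtlety is that the threshold $J = J(\varepsilon_1)$ inherits a dependence on $u_0$ (and hence on $\gamma$ and $E_0$) through \eqref{norm-u0}, but the lemma only asserts the existence of such a $J$ and makes no quantitative claim about it.
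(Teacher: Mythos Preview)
Your proposal is correct and follows essentially the same approach as the paper's proof: both obtain the $H^2$-bound from \eqref{norm-u0}, then handle the wave-norm condition by combining finite speed of propagation (using that $u_0$ vanishes on $X_\alpha$ and the identification $Y(\pi^{-1}(U_k),\alpha_k)=\pi^{-1}(X(U_k,\alpha_k))$) with an energy estimate for the tail $\widetilde{W}\big((u_0-u_J)\circ\pi\big)$. The paper cites \cite[Lemma~4.2]{BILL} for the latter steps and also uses the occasion to prove the domain-of-influence identity $Y(\pi^{-1}(U),\tau)=\pi^{-1}(X(U,\tau))$ carefully, which you assert (correctly) from the orbit-distance formula; this identity is reused later in Proposition~\ref{projection-Y}, so it is worth stating as a standalone fact.
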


\begin{proof}
The first norm condition in \eqref{Udef} is satisfied due to \eqref{norm-u0}.
The second norm condition is a consequence of Lemma \ref{uu0}, finite speed propagation of the wave equation on $Y$, and the energy estimates, see \cite[Lemma 4.2]{BILL}. 
Due to Theorem \ref{thm-Y} and $d_X(x,y)=d_Y(\pi^{-1}(x),\pi^{-1}(y))$, the domains of influence on $X$ and $Y$ match. More precisely, define
\begin{equation}
Y(\widetilde{U},\tau):= \{y\in Y: d_Y(y,\widetilde{U}) <\tau\}.
\end{equation}
Then
\begin{equation}\label{domain-influence-XY}
Y(\pi^{-1}(U),\tau)=\pi^{-1}\big( X(U,\tau) \big).
\end{equation}

We prove \eqref{domain-influence-XY} as follows. Let $y\in Y$ such that $d_Y(y, \pi^{-1}(U))<\tau$. Let $x=\pi(y)$. Then $d_Y(\pi^{-1}(x), \pi^{-1}(U))<\tau$ by definition, which yields that $d_X(x,U)<\tau$. This shows that $Y(\pi^{-1}(U),\tau) \subset \pi^{-1}\big( X(U,\tau) \big)$. 
On the other hand, let $z=\pi^{-1}(x)$ such that $d_X(x,U)<\tau$, which gives $d_Y(\pi^{-1}(x), \pi^{-1}(U))<\tau$. Thus there exists $z_0\in \pi^{-1}(x)$ such that $d_Y(z_0, \pi^{-1}(U))<\tau$. 
Since $X=Y/O(n)$, there exists $o\in O(n)$ such that $o(z)=z_0$.
Since $O(n)$ acts isometrically on $Y$, we have
$d_Y(z,\pi^{-1}(U))=d_Y(z,o^{-1}(\pi^{-1}(U)))<\tau$. 
This proves the other direction $\pi^{-1}\big( X(U,\tau) \big) \subset Y(\pi^{-1}(U),\tau)$.
\end{proof}

\begin{proposition}\label{projection-Y}
Let $\widetilde{u}\in H_O^2(Y)$ satisfying
\begin{equation*}\label{aprior}
\|\widetilde{u}\|_{L^2(Y)}=1,\quad \|\widetilde{u}\|_{H^2(Y)}\leq  E_0.
\end{equation*}
Set $u=\widetilde{u}\circ \pi^{-1} \in L^2(X)$.
Let $\alpha=(\alpha_1,\cdots,\alpha_N)$, $\alpha_k\in [\eta, D]$ be given, and $X_{\alpha}$ be defined in \eqref{Xalpha}. 
Then for any $\varepsilon>0$, there exists sufficiently large $J$,
such that by only knowing the first $J$ interior spectral data $\{\lambda_j,\phi_j|_{B(p,r_0)}\}_{j=0}^J$ of $X$ and the first $J$ Fourier coefficients $\{a_j\}_{j=0}^J$ of $u$, we can find $\{b_j\}_{j=0}^{J}$ and $u^a=\sum_{j=0}^{J}b_j \phi_j$, such that
$$\|u^a-\chi_{_{X_{\alpha}}}u\|_{L^2(X,\mu)} <\varepsilon,$$
where $\chi$ denotes the characteristic function.
\end{proposition}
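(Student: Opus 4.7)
The plan is to approximate $\chi_{X_\alpha} u$ by
$$u^a := u_J^{(1)} - w_\ast, \qquad u_J^{(1)} := \sum_{j=0}^J a_j \phi_j,$$
where $u_J^{(1)}$ is the directly computable Fourier truncation of $u$, and $w_\ast \in \mathcal{V}_J$ is a minimizer of $\|w - u_J^{(1)}\|_{L^2(X)}^2$ over the feasible set $\mathcal{U}(E_1,\varepsilon_1)$ from \eqref{Udef}. The heuristic is that $w_\ast$ plays the role of the truncation $u_J$ of the cutoff $u_0$ produced by Lemma~\ref{uu0}, which itself approximates $\chi_{X_\alpha^c} u$. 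Three parameters $\gamma, \varepsilon_1, J$ will be tuned in the order $\gamma \to \varepsilon_1 \to J$.

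First I would apply Lemma~\ref{uu0} with a small $\gamma>0$ (to be fixed later), producing $u_0 \in H^2(X^{reg})$ with $u_0|_{X_\alpha}=0$, $u_0|_{X_{\alpha+\gamma}^c}=u$, and $\|u_0\circ\pi\|_{H^2(Y)}\leq E_1 := C_0 E_0 \gamma^{-\dim(Y)-2}$. Writing $u_0 = \sum_j d_j \phi_j$ and $u_J := \sum_{j=0}^J d_j \phi_j$, Lemma~\ref{uJ} ensures that $u_J \in \mathcal{U}(E_1,\varepsilon_1)$ once $J$ exceeds some threshold, certifying that the feasible set is non-empty. By Remark~\ref{remark-computable} the set $\mathcal{U}(E_1,\varepsilon_1)$ is closed, convex, and bounded in $\mathcal{V}_J$, and both its defining inequalities and the objective $\|w-u_J^{(1)}\|_{L^2(X)}^2$ are computable from the interior spectral data together with $\{a_j\}_{j=0}^J$; hence a minimizer $w_\ast$ exists and can be located from the data. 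Taking $u_J$ as a trial element gives
$$M^2 := \|w_\ast - u_J^{(1)}\|_{L^2(X)}^2 \leq \|u_J - u_J^{(1)}\|_{L^2(X)}^2 \leq \|u_0 - u\|_{L^2(X)}^2 \leq \|u\|_{L^2(X_{\alpha+\gamma})}^2,$$
where the last inequality follows from the pointwise bound $|u_0 - u| \leq |u|\chi_{X_{\alpha+\gamma}}$ inherent in the partition-of-unity construction of Lemma~\ref{uu0}.

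Next, since $w_\ast \in \mathcal{V}_J$, its lift $w_\ast \circ \pi$ is $O(n)$-invariant, so Proposition~\ref{uc-whole} applied on $Y$ with the observation bound $\|\widetilde W(w_\ast \circ \pi)\|_{L^2(\pi^{-1}(U_k)\times[-\alpha_k,\alpha_k])} \leq \varepsilon_1$ from \eqref{Udef}, combined with the domain-of-influence identity \eqref{domain-influence-XY} and the isometry \eqref{norm-iso}, yields
$$\|w_\ast\|_{L^2(X_\alpha)} = \|w_\ast \circ \pi\|_{L^2(\pi^{-1}(X_\alpha))} \leq \delta(E_1,\varepsilon_1),$$
with $\delta \to 0$ as $\varepsilon_1 \to 0$ for fixed $E_1$. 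Splitting the error via $X_\alpha$ versus $X_\alpha^c$,
$$\|u^a - \chi_{X_\alpha} u\|_{L^2(X)}^2 = \|u_J^{(1)} - w_\ast - u\|_{L^2(X_\alpha)}^2 + \|u_J^{(1)} - w_\ast\|_{L^2(X_\alpha^c)}^2,$$
I would control the $X_\alpha$-term by the Fourier tail $\|u - u_J^{(1)}\|_{L^2(X)}$ plus $\delta$, and the $X_\alpha^c$-term by the Pythagorean identity $\|u_J^{(1)} - w_\ast\|_{L^2(X_\alpha^c)}^2 = M^2 - \|u_J^{(1)} - w_\ast\|_{L^2(X_\alpha)}^2$ together with the reverse-triangle lower bound $\|u_J^{(1)} - w_\ast\|_{L^2(X_\alpha)} \geq \|u\|_{L^2(X_\alpha)} - \|u - u_J^{(1)}\|_{L^2(X)} - \delta$. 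After algebra this produces a bound by $\|u\|_{L^2(X_{\alpha+\gamma}\setminus X_\alpha)}^2 + 2E_0(\text{tail}+\delta)$, with a trivial fallback $\leq M^2$ whenever $\|u\|_{L^2(X_\alpha)}$ is itself below $\text{tail}+\delta$.

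With these pieces in hand, I would tune $\gamma$ small so that the shell contribution $\|u\|_{L^2(X_{\alpha+\gamma}\setminus X_\alpha)}^2$ is below $\varepsilon^2/10$, using absolute continuity of the integral and the fact that $\mu(\partial X_\alpha)=0$; this fixes $E_1$. Then choose $\varepsilon_1$ small enough that $\delta(E_1,\varepsilon_1)$ contributes below a fixed fraction of $\varepsilon$, and finally take $J$ large enough that Lemma~\ref{uJ} applies and the $H^2$-Fourier tail of $u$ is small. The main obstacle is the very weak (logarithmic) stability in Proposition~\ref{uc-whole}: since $E_1$ blows up polynomially as $\gamma \to 0$, one must push $\varepsilon_1$ down at an essentially doubly-exponential rate, and this cascade is what will ultimately produce the triple-logarithmic rate in Theorem~\ref{thm-GH}. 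For the qualitative existence statement of the proposition, only the convergence $\delta \to 0$ is needed, and the argument above yields exactly that.
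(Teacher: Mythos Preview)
Your approach is essentially the paper's: minimize over $\mathcal{U}(E_1,\varepsilon_1)$, use the truncated cutoff $u_J$ as a competitor for an upper bound, use quantitative unique continuation for smallness on $X_\alpha$, and combine. Your objective $\|w-u_J^{(1)}\|_{L^2}^2$ and the paper's $\|w-u\|_{L^2}^2$ have the same minimizer by orthogonality of $\mathcal{V}_J$ to the Fourier tail, so that difference is cosmetic; your Pythagorean splitting is a repackaging of the paper's upper/lower sandwich.

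There is one genuine gap. Proposition~\ref{uc-whole} requires an $H^1$ observation bound $\|u\|_{H^1(V\times[-T,T])}\le \varepsilon_0$, but the constraint in \eqref{Udef} that you feed in is only an $L^2$ bound $\|\widetilde W(w_\ast\circ\pi)\|_{L^2(\pi^{-1}(U_k)\times[-\alpha_k,\alpha_k])}\le \varepsilon_1$. The paper bridges this with the interpolation
\[
\|\widetilde W(w\circ\pi)\|_{H^1}^2 \le \|\widetilde W(w\circ\pi)\|_{L^2}\,\|\widetilde W(w\circ\pi)\|_{H^2} \le C(D)\,\varepsilon_1 E_1,
\]
using the $H^2$ bound from $\mathcal{U}$ and an energy estimate. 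Without this step your $\delta(E_1,\varepsilon_1)$ is not actually delivered by Proposition~\ref{uc-whole}. A second, smaller point: your shell estimate via ``$\mu(\partial X_\alpha)=0$ and absolute continuity'' is qualitatively fine but not proved; the paper instead lifts to $Y$ and uses the quantitative bound $\mu(X_{\alpha+\gamma}\setminus X_\alpha)\le CN\gamma$ from \cite[Proposition~3.10]{BILL}, which also feeds directly into the explicit dependence needed later for Theorem~\ref{thm-GH}.
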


\begin{proof}

We consider the following minimization problem in $\mathcal{U}$ defined in (\ref{Udef}).
Let $u_{min}\in\mathcal{U}$ be the solution of the minimization problem
\begin{equation}\label{minimization}
\|u_{min}-u\|_{L^2(X)}=\min_{w\in \mathcal{U}(E_1,\varepsilon_1)} \|w-u\|_{L^2(X)},
\end{equation}
with parameters $E_1,\varepsilon_1$ to be determined.
Since $w\in\mathcal{U}$, the norm conditions in \eqref{Udef} yield the following by interpolation and energy estimate:
\begin{eqnarray*}
\|\widetilde{W}(w \circ \pi)\|^2_{H^1(\pi^{-1}(U_{k})\times [-\alpha_k,\alpha_k])} &\leq& \|\widetilde{W}(w \circ \pi)\|_{L^2(\pi^{-1}(U_{k})\times [-\alpha_k,\alpha_k])} \|\widetilde{W}(w \circ \pi)\|_{H^2(Y\times [-\alpha_k,\alpha_k])} \\
&\leq& \varepsilon_1 \cdot C(D) \|w\circ \pi\|_{H^2(Y)} \leq C(D) E_1 \varepsilon_1.
\end{eqnarray*}
Denote $\widetilde{U}_k=\pi^{-1}(U_k)$.
Then we apply our quantitative unique continuation result (Proposition \ref{uc-whole}), in view of the bounded geometric parameters \eqref{diam-Y}-\eqref{inj-bound} for $(Y,\widetilde{\mu}) \in \overline{{\frak F}{\frak M}{\frak M}}(n,\Lambda,\LK,D)$ and Lemma \ref{volumebound-Y}.
By Lemma \ref{bound-Y-Lip}, Remark \ref{norm-equiv}, \eqref{domain-influence-XY} and \eqref{norm-iso}, we have
$$\|w\|_{L^2(X(U_k,\alpha_k))} = \|w\circ \pi\|_{L^2(Y(\widetilde{U}_k,\alpha_k))}< \varepsilon_2=\varepsilon_2(h,E_1,\eta,\gamma,\varepsilon_1),$$ 
for all $k=1,\cdots,N$.
Hence,
$$\|w\|_{L^2(X_{\alpha})} < N\varepsilon_2.$$
Then for any $w\in \mathcal{U}$ and in particular for $w=u_{min}$,
\begin{eqnarray} \label{lower}
\|w-u\|_{L^2(X)}^2 &=&  \|w-u\|_{L^2(X_{\alpha})}^2+  \|w-u\|_{L^2(X_{\alpha}^c)}^2 \nonumber \\
&>& \|u\|^2_{L^2(X_{\alpha})}-2N\varepsilon_2 +  \|w-u\|^2_{L^2(X_{\alpha}^c)}.
\end{eqnarray}

On the other hand the following estimate holds for $u_{J}$:
\begin{eqnarray*}
\|u_{J}-u\|_{L^2(X)}^2 &\leq& (\|u_{J}-u_0\|_{L^2(X)} + \|u_0-u\|_{L^2(X)})^2 \\
&\leq& \|u_{J}-u_0\|^2_{L^2(X)}+4\|u_{J}-u_0\|_{L^2(X)}  + \|u_0-u\|^2_{L^2(X)} \\
&\leq & C \lambda_{J}^{-\frac{1}{2}}\gamma^{-\textrm{dim}(Y)-2}+ \|u\|^2_{L^2(X_{\alpha})}+ \|u_0-u\|^2_{L^2(X_{\alpha+\gamma}-X_{\alpha})} ,
\end{eqnarray*}
where
$$\|u_0-u\|_{L^2(X_{\alpha+\gamma}-X_{\alpha})}\leq \|u\|_{L^2(X_{\alpha+\gamma}-X_{\alpha})} < C \gamma^{\frac{1}{2(n+1)}}.$$
The first inequality above is due to the construction of $u_0$, cf. \eqref{def-u0-partition}.
The last inequality above is due to \eqref{norm-iso}, Sobolev embedding (on $Y$) and
\begin{eqnarray*}
\mu(X_{\alpha+\gamma}-X_{\alpha}) &=& \widetilde{\mu}(\pi^{-1}(X_{\alpha+\gamma})-\pi^{-1}(X_{\alpha})) \\
&=& \frac{\rho_Y}{\textrm{Vol(Y)}} \textrm{Vol}(\pi^{-1}(X_{\alpha+\gamma})-\pi^{-1}(X_{\alpha})) \\
&=& \frac{\rho_Y}{\textrm{Vol(Y)}} \textrm{Vol}(Y_{\alpha+\gamma}-Y_{\alpha}) < C N \gamma <C \gamma^{1/2},
\end{eqnarray*}
where we have used \cite[Proposition 3.10]{BILL}, \eqref{norm-iso}, \eqref{domain-influence-XY}, the upper bound for $\rho_Y$, the lower bound for $\textrm{Vol}(Y)$ due to Lemma \ref{volumebound-Y} and $\gamma<N^{-2}$. 
In the above we denote the difference of sets by $A-B:=A\cap B^c$ for two sets $A,B$.
The domain $Y_{\alpha}$ is defined by
\begin{equation}\label{Yalpha}
Y_{\alpha}:=\bigcup_{k=1}^N Y(\pi^{-1}(U_k), \alpha_k)=\bigcup_{k=1}^N \big\{ y \in Y: d_Y(y,\pi^{-1}(U_k)) < \alpha_k \big\}.
\end{equation}
The fact that 
\begin{equation}
\textrm{Vol} \Big( Y(\pi^{-1}(U_k),\alpha_k+\gamma)-Y(\pi^{-1}(U_k),\alpha_k) \Big) <C\gamma
\end{equation}
follows from \cite[Proposition 3.10]{BILL}. Note that here the constant $C$ depends on $\eta$ due to the condition $\alpha_k\geq \eta$, the same as the special case of Proposition \ref{uc-whole}.

Hence,
$$\|u_{J}-u\|_{L^2(M)}^2 < C \lambda_{J}^{-\frac{1}{2}}\gamma^{-\textrm{dim}(Y)-2}+ \|u\|^2_{L^2(X_{\alpha})}+ C \gamma^{\frac{1}{n+1}}.$$
For sufficiently large $J$, we have $u_{J}\in \mathcal{U}(E_1,\varepsilon_1)$ by Lemma \ref{uJ} with $E_1=C_0 E_0 \gamma^{-\textrm{dim}(Y)-2}$. This indicates that the minimizer $u_{min}$ also satisfies
\begin{equation} \label{upper}
\|u_{min}-u\|_{L^2(X)}^2 < C \lambda_{J}^{-\frac{1}{2}}\gamma^{-\textrm{dim}(Y)-2}+ \|u\|^2_{L^2(X_{\alpha})}+ C \gamma^{\frac{1}{n+1}}.
\end{equation}

Combining the two inequalities (\ref{lower}) and (\ref{upper}), we have
$$\|u_{min}-u\|^2_{L^2(X_{\alpha}^c)} < 2N\varepsilon_2 +C(\Lambda) \lambda_{J}^{-\frac{1}{2}}\gamma^{-\textrm{dim}(Y)-2}+ C\gamma^{\frac{1}{n+1}}.$$
The fact that $\|u_{min}\|_{L^2(X_{\alpha})}< N\varepsilon_2$ implies that
\begin{eqnarray*}
\|\chi_{X_{\alpha}}u-(u-u_{min})\|^2_{L^2(X)}&=&\|u_{min}-\chi_{X_{\alpha}^c}u\|^2_{L^2(X)} \\
&=& \|u_{min}-\chi_{X_{\alpha}^c}u\|^2_{L^2(X_{\alpha}^c)}+\|u_{min}\|^2_{L^2(X_{\alpha})} \\
&<& 2N\varepsilon_2 +C(\Lambda) \lambda_{J}^{-\frac{1}{2}}\gamma^{-\textrm{dim}(Y)-2}+C\gamma^{\frac{1}{n+1}} +N^2\varepsilon_2^2 .
\end{eqnarray*}

Now observe that the Fourier coefficients of $u_{min}$ is solvable given interior spectral data because it is equivalent to a polynomial minimization problem in a bounded domain in $\mathbb{R}^J$, see Remark \ref{remark-computable}. Suppose we have found a minimizer $u_{min}=\sum_{j=0}^{J} c_j \phi_j$. Since the first $J$ Fourier coefficients of $u$ are given as $a_j$, we can replace the function $u-u_{min}$ in the last inequality by $\sum_{j=0}^J a_j \phi_j-u_{min}$ and the error in $L^2$-norm is controlled by $\lambda_{J}^{-1/2}$. Hence by the Cauchy-Schwarz inequality, we obtain
\begin{equation}\label{ualast}
\big\|\chi_{X_{\alpha}}u-\sum_{j=0}^{J}(a_j-c_j)\phi_j \big\|^2_{L^2(X)} < 4N\varepsilon_2+2N^2\varepsilon_2^2 +C(\Lambda) \lambda_{J}^{-\frac{1}{2}}\gamma^{-\textrm{dim}(Y)-2}+ C\gamma^{\frac{1}{n+1}},
\end{equation}
which makes $u^a :=\sum_{j=0}^{J} b_j \phi_j$ with $b_j=a_j-c_j$ our desired function. 

For choices of parameters, making each term of the right-hand side of \eqref{ualast} small gives choices of $\varepsilon_2$, $\gamma$, $\lambda_J$ in this order, where $\eta$ and hence $N$ are fixed. The choice of $\gamma$ gives $E_1$, and together with $\varepsilon_2$ gives choices of $h$, then $\varepsilon_1$ by Proposition \ref{uc-whole}, which at last determines $J$ by Lemma \ref{uJ}.
\end{proof}

We apply Proposition \ref{projection-Y} to $\widetilde{u}$ being constant function $1$ (i.e., the first eigenfunction $\phi_0=1$ since $\mu(X)=1$), 
and thus we can approximate any $\mu (X_{\alpha})$ with explicit estimates.

\begin{lemma} \label{measure-appro}
Let $\alpha=(\alpha_1,\cdots,\alpha_N)$, $\alpha_k\in [\eta, D]$ be given, and $X_{\alpha}$ be defined in \eqref{Xalpha}. 
Then for any $\varepsilon>0$, there exists sufficiently large $J$, such that by only knowing the first $J$ interior spectral data $\{\lambda_j,\phi_j|_{B(p,r_0)}\}_{j=0}^J$ of $X$, we can compute a number $\mu^a (X_{\alpha})$ satisfying 
$$\big|\mu^a (X_{\alpha})-\mu (X_{\alpha}) \big|<\varepsilon.$$
\end{lemma}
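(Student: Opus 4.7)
The plan is to specialize Proposition \ref{projection-Y} to the constant function and then integrate. Take $\widetilde{u}\equiv 1$ on $Y$. This function is $O(n)$-invariant, and since the norms on $Y$ are taken with respect to the normalized measure $\widetilde{\mu}$ (Remark \ref{norm-equiv}), we have $\|\widetilde{u}\|_{L^2(Y)}=1$ and $\|\widetilde{u}\|_{H^2(Y)}=1$, so the hypotheses of Proposition \ref{projection-Y} are satisfied with $E_0=1$. The projected function $u=\widetilde{u}\circ\pi^{-1}\equiv 1$ on $X$. Since $\mu(X)=1$ and $X$ is connected, the $L^2(X,\mu)$-normalized ground state satisfies $\phi_0\equiv 1$, so the Fourier coefficients of $u$ are simply $a_0=1$ and $a_j=0$ for $j\geq 1$; these are explicitly known without further data.

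Next, given any $\varepsilon>0$, apply Proposition \ref{projection-Y} with tolerance $\varepsilon$ to obtain a sufficiently large $J$ and coefficients $\{b_j\}_{j=0}^J$, computable from the spectral data $\{\lambda_j,\phi_j|_{B(p,r_0)}\}_{j=0}^J$ via the polynomial minimization described in Remark \ref{remark-computable}, such that
$$u^a=\sum_{j=0}^{J} b_j\phi_j \quad \text{satisfies} \quad \|u^a-\chi_{_{X_\alpha}} u\|_{L^2(X,\mu)}<\varepsilon.$$
Since $u\equiv 1$ on $X$, the function $\chi_{_{X_\alpha}}u$ is precisely the indicator $\chi_{_{X_\alpha}}$, whose $\mu$-integral equals $\mu(X_\alpha)$. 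This motivates defining the computable approximation
$$\mu^a(X_\alpha) := \int_X u^a \, d\mu = \sum_{j=0}^{J} b_j \int_X \phi_j \, d\mu = b_0,$$
where we used orthonormality of $\{\phi_j\}$ against $\phi_0\equiv 1$ to collapse the sum.

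Finally, the desired estimate follows by Cauchy--Schwarz together with $\mu(X)=1$:
$$|\mu^a(X_\alpha)-\mu(X_\alpha)|=\bigg|\int_X (u^a - \chi_{_{X_\alpha}})\, d\mu\bigg| \leq \|u^a-\chi_{_{X_\alpha}}\|_{L^2(X,\mu)}\cdot \mu(X)^{1/2} < \varepsilon.$$
There is no genuine obstacle: all quantitative difficulty (the stable unique continuation, the partition-of-unity construction on $Y$, the choice of parameters $\varepsilon_2,\gamma,\lambda_J,h,\varepsilon_1$) has already been absorbed into Proposition \ref{projection-Y}. The only point to verify carefully is that $\mu^a(X_\alpha)=b_0$ is truly computable from the spectral data alone, which holds because the minimization problem defining $\{b_j\}$ is finite-dimensional and its constraints (the $\mathcal{U}(E_1,\varepsilon_1)$ norm bounds) are computable as explained in Remark \ref{remark-computable}, and the known Fourier coefficients $a_0=1,a_j=0$ of the constant function require no additional input.
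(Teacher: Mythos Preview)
Your proof is correct and follows exactly the approach indicated in the paper: apply Proposition \ref{projection-Y} to the constant function $\widetilde{u}\equiv 1$ (the first eigenfunction $\phi_0=1$ since $\mu(X)=1$), then integrate. You have simply spelled out the details that the paper leaves implicit in its one-sentence remark preceding the lemma.
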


\section{Reconstruction of interior distance functions}
\label{sec-recon}

Let $(X,\mu) \in \overline{{\frak M}{\frak M}}(n,\Lambda,\LK,D)$ and $p\in X^{reg}$. Suppose ${\rm Vol}_{d}(X) \geq v_0$ where $d={\rm dim}(X)$.
Given an open subset $U\subset X$, the interior distance functions $r_x: U\to \mathbb{R}$ corresponding to $x\in X$ are defined by $r_x (z)=d(x,z),\, z\in U$. Denote $\mathcal{R}_U(X)=\{r_x: x\in X\}$, see e.g. \cite{KKL} for basic properties. 
Assume that the finite interior spectral data for the weighted Laplacian $\Delta_X$ are given on a ball $B(p,r_0) \subset X^{reg}$.
In this subsection, we consider interior distance functions on $B(p,r_0/2)$, denoted by $B$ in short.
Let $0<\eta< \min\{i_0/2,r_0/4\}$ be such that $\eta<\inf_{x\in B(p,r_0/2)}$ {\rm inj}(x), and $\{p_i\}_{i=1}^N$ be an $\eta/2$-net in $B=B(p,r_0/2)$, where the total number $N$ is bounded by \eqref{bound-N}.
Let $\{U_i\}_{i=1}^N$ be disjoint open neighborhoods of $p_i$ satisfying \eqref{partition}.

Let $\beta=(\beta_1,\cdots,\beta_N),\, \beta_i\in \mathbb{N}$ be a multi-index. Define
\begin{equation} \label{def-slice}
X_{\beta}^{\ast}:= \bigcap_{i: \beta_i>0} \Big\{ x\in X: d(x,U_i)\in [\beta_i \eta-2\eta, \beta_i\eta) \Big\}.
\end{equation}

\begin{lemma} \label{volume-slice}
Under the assumptions of Lemma \ref{measure-appro}, we have
$$\big|\mu^a (X_{\beta}^{\ast})-\mu (X_{\beta}^{\ast}) \big|<2^{N} \varepsilon.$$
\end{lemma}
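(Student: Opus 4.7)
The plan is to express $\chi_{X_\beta^\ast}$ as a signed sum of $2^{|I|}$ indicators of sets of the form $X \setminus X_\alpha$ for multi-indices $\alpha$ to which Lemma \ref{measure-appro} applies, and then bound the total error by the triangle inequality.

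Let $I = \{i : \beta_i > 0\}$, so $|I| \leq N$. For each $i \in I$, I would set
\[
B_i^- := \{x \in X : d(x,U_i) \geq \beta_i \eta - 2\eta\}, \qquad B_i^+ := \{x \in X : d(x,U_i) \geq \beta_i \eta\},
\]
with the convention $B_i^- = X$ when $\beta_i \eta - 2\eta \leq 0$. The $i$-th factor in the definition \eqref{def-slice} of $X_\beta^\ast$ equals $B_i^- \setminus B_i^+$, and since $B_i^+ \subset B_i^-$ we have $\chi_{B_i^- \setminus B_i^+} = \chi_{B_i^-} - \chi_{B_i^+}$. Multiplying over $i \in I$ gives
\[
\chi_{X_\beta^\ast} \,=\, \prod_{i \in I} \bigl(\chi_{B_i^-} - \chi_{B_i^+}\bigr) \,=\, \sum_{S \subset I} (-1)^{|S|}\, \chi_{C_S}, \qquad C_S := \bigcap_{i \in S} B_i^+ \,\cap\, \bigcap_{i \in I \setminus S} B_i^-.
\]

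Each $C_S$ is a finite intersection of sets of the form $\{d(\cdot,U_k) \geq \alpha_k^S\}$, hence $C_S = X \setminus X_{\alpha^S}$, where $X_{\alpha^S}$ is the union \eqref{Xalpha} with $\alpha_k^S = \beta_k \eta$ for $k \in S$, $\alpha_k^S = \max(\beta_k \eta - 2\eta,\,0)$ for $k \in I \setminus S$, and $\alpha_k^S = 0$ for $k \notin I$ (these terms drop out of the union, since $X(U_k,0) = \emptyset$). Boundary cases are harmless: any coordinate $\alpha_k^S > D$ forces $X_{\alpha^S} = X$ and $\mu(C_S) = 0$ exactly, while coordinates equal to $0$ are simply removed from the union. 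All remaining meaningful coordinates of $\alpha^S$ lie in $[\eta,D]$, so Lemma \ref{measure-appro} provides, for sufficiently large $J$, approximations $\mu^a(X_{\alpha^S})$ with $|\mu^a(X_{\alpha^S}) - \mu(X_{\alpha^S})| < \varepsilon$.

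Setting $\mu^a(X_\beta^\ast) := \sum_{S \subset I} (-1)^{|S|}\bigl(1 - \mu^a(X_{\alpha^S})\bigr)$, the triangle inequality yields
\[
|\mu^a(X_\beta^\ast) - \mu(X_\beta^\ast)| \,\leq\, \sum_{S \subset I} |\mu^a(X_{\alpha^S}) - \mu(X_{\alpha^S})| \,<\, 2^{|I|}\varepsilon \,\leq\, 2^N \varepsilon.
\]
Since only finitely many distinct multi-indices $\alpha^S$ arise, taking $J$ to be the maximum of the values supplied by Lemma \ref{measure-appro} for each of them suffices. The main point is the combinatorial identity expressing an intersection of annular slices as an alternating sum over complements of $X_{\alpha}$; no analytic estimate beyond Lemma \ref{measure-appro} is needed, and the bookkeeping of boundary cases is the only mildly technical step.
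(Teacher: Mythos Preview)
Your argument is correct and is precisely the inclusion-exclusion computation the paper has in mind; indeed, the paper states Lemma \ref{volume-slice} without proof, treating it as an immediate consequence of Lemma \ref{measure-appro}, and your expansion of $\chi_{X_\beta^\ast}$ into $2^{|I|}\le 2^N$ signed indicators of complements of sets $X_{\alpha^S}$ is the natural way to make this explicit. One minor remark: Lemma \ref{measure-appro} is literally stated for multi-indices with all $\alpha_k\in[\eta,D]$, whereas some of your $\alpha_k^S$ vanish; you handle this correctly by observing that the corresponding $X(U_k,0)=\emptyset$ simply drops from the union, so one applies the lemma (or rather the machinery of Proposition \ref{projection-Y} behind it) to the reduced collection of $U_k$'s---this is routine but worth saying explicitly, as you do.
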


\begin{definition} \label{def-Rstar}
Let $\eta\in (0,i_0/2)$. We choose all multi-indices $\beta$ with $\beta_i\in \mathbb{N}$, $\beta_i \in (0, D/\eta]$ for all $i$ such that 
\begin{equation}
\mu^a (X_{\beta}^{\ast}) > c_{\ast} \eta^{\dim(Y)}, 
\end{equation}
where $c_{\ast}$ is a uniform constant  depending on $n,\Lambda,D, \textrm{dim}(X),v_0$. 
For all such multi-indices $\beta$, define piecewise constant functions
\begin{equation}
r_{\beta}(z) := \beta_i \eta, \;\textrm{ if }z\in U_i. 
\end{equation}
Denote by $\mathcal{R}^{\ast}$ the set of all such functions $r_{\beta}$.
\end{definition}

\begin{lemma} \label{distance-recon}
For any $\eta>0$, there exists a constant $J_0$ such that for any $J>J_0$, one can construct a finite set $\mathcal{R}^{\ast}$ from the first $J$ interior spectral data $\{\lambda_j,\phi_j|_{B(p,r_0)}\}_{j=0}^J$ of $X$ such that
$$d_H\big(\mathcal{R}^{\ast},\mathcal{R}_B(X) \big) <4\eta,$$
where $B=B(p,r_0/2)$ and $d_H$ is the Hausdorff distance in $L^{\infty}(B)$.
\end{lemma}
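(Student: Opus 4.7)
The plan is to verify both directions of the Hausdorff-distance inequality, relying on Lemma \ref{volume-slice} to pass between the computable $\mu^a$ and the true measure $\mu$, and on the uniform geometry of $Y$ to obtain a lower volume bound for the slices $X_\beta^*$. The finiteness of $\mathcal{R}^*$ is guaranteed by the fact that admissible multi-indices satisfy $\beta_i \in \{1,\ldots,\lceil D/\eta\rceil\}$, so there are at most $\lceil D/\eta\rceil^N$ candidate functions.

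For the approximation of $r_x \in \mathcal{R}_B(X)$ by some $r_\beta \in \mathcal{R}^*$, I would, given $x \in X$, choose for each $i$ the unique integer $\beta_i \geq 1$ with $d(x,U_i) \in [\beta_i\eta - \tfrac{3}{2}\eta,\, \beta_i\eta - \tfrac{1}{2}\eta)$. The triangle inequality then implies $B(x,\eta/2) \subset X_\beta^*$. To verify $r_\beta \in \mathcal{R}^*$, I plan to establish a uniform lower bound $\mu(X_\beta^*) \geq c'\eta^{\dim Y}$ by lifting to $Y$: using $\pi_*\widetilde{\mu}=\mu$ together with the inclusion $B_Y(y_0,\eta/2) \subset \pi^{-1}(B(x,\eta/2))$ for any $y_0 \in \pi^{-1}(x)$, and then invoking Bishop--Gromov on $Y$, whose injectivity radius and sectional curvature are controlled by \eqref{inj-bound} and Lemma \ref{sec-bound-Y}, combined with Lemma \ref{bound-Y-Lip} and Lemma \ref{volumebound-Y}. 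Setting $c_* := c'/2$ and applying Lemma \ref{volume-slice} with $\varepsilon < c'\eta^{\dim Y}/2^{N+1}$ for a suitable $J$ then yields $\mu^a(X_\beta^*) > c_*\eta^{\dim Y}$, i.e., $r_\beta \in \mathcal{R}^*$. The $L^\infty$-estimate $\|r_x - r_\beta\|_{L^\infty(B)} \leq \tfrac{3}{2}\eta$ follows from the interval containment together with $\mathrm{diam}(U_i) \leq \eta$.

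For the reverse direction (every $r_\beta \in \mathcal{R}^*$ is close to some $r_x$), the same quantitative control of $\mu^a$ via Lemma \ref{volume-slice} ensures that the condition $\mu^a(X_\beta^*) > c_*\eta^{\dim Y}$ forces $\mu(X_\beta^*) > 0$, so $X_\beta^* \neq \emptyset$. Any $x \in X_\beta^*$ satisfies $d(x,z) \in [d(x,U_i),\, d(x,U_i) + \eta] \subset [\beta_i\eta - 2\eta,\, \beta_i\eta + \eta)$ for every $z \in U_i$, yielding $\|r_x - r_\beta\|_{L^\infty(B)} \leq 2\eta$. Combining both directions gives $d_H(\mathcal{R}^*, \mathcal{R}_B(X)) \leq 2\eta < 4\eta$; the choice of $J_0$ is determined by requiring that Lemma \ref{volume-slice} deliver $\varepsilon$-accuracy uniformly over the finitely many admissible multi-indices simultaneously.

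The main obstacle will be the uniform lower bound $\mu(X_\beta^*) \gtrsim \eta^{\dim Y}$: this cannot be derived directly on $X$ since the weighted density $\rho_X$ may vanish at non-orbifold singularities, and must instead be extracted by passing to the frame-bundle lift $Y$, where $C^2$-regularity of the metric, uniformly bounded sectional curvature, and a positive injectivity radius are all available. Everything else is a matter of choosing the correct interval decomposition so that $B(x,\eta/2)$ sits comfortably inside $X_\beta^*$ rather than merely intersecting it, and making the parameter $J$ work uniformly over the finite collection of multi-indices.
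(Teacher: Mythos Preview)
Your proposal is correct and follows essentially the same route as the paper's proof: both directions of the Hausdorff estimate are handled exactly as you outline, the key lower bound $\mu(X_\beta^*)\gtrsim \eta^{\dim Y}$ is obtained by lifting to $Y$ via $\pi_*\widetilde\mu=\mu$ and using the uniform injectivity radius and density bounds there, and the choice $\varepsilon=2^{-N}\varepsilon_*$ in Lemma \ref{volume-slice} matches the paper's. Your interval placement is slightly more careful (yielding $2\eta$ rather than the paper's $3\eta$), and the volume lower bound on $Y$ comes from G\"unther's inequality rather than Bishop--Gromov proper, but these are cosmetic; the argument is the same.
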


\begin{proof}
For any $x\in X$, it is clear that there exists $\beta_i\in \mathbb{N}, \,\beta_i>0$ such that 
$$B(x,\eta/2) \subset X_{\beta}^{\ast}.$$
It is clear that such $r_\beta$ is an approximation of $r_x$.
Indeed, by definition \eqref{def-slice}, the fact that $x\in X_{\beta}^{\ast}$ gives $|d(x,U_i)-\beta_i\eta|<2\eta$ for all $i$.
Since $\textrm{diam}(U_i)\leq \eta$, this shows for all $i$,
$$|d(x,z)-\beta_i\eta|<3\eta,\;\; \forall\, z\in U_i.$$

Furthermore, since $\pi^{-1}(B(x,\eta/2))$ contains a ball of radius $\eta/2$ in $Y$, 
\begin{eqnarray} \label{volume-smallball}
\mu (B(x,\eta/2)) = \widetilde{\mu} (\pi^{-1}(B(x,\eta/2))) &\geq& \widetilde{\mu} (B_Y(\eta/2)) \nonumber \\
&=& \frac{\rho_Y}{{\rm Vol}(Y)} {\rm Vol}(B_Y(\eta/2)) \nonumber \\
&\geq& c \eta^{\dim(Y)},
\end{eqnarray}
where the last inequality used Lemma \ref{bound-Y-Lip}.
We choose 
$\varepsilon_{\ast}=c \eta^{\dim(Y)}/2$ and $\varepsilon=2^{-N} \varepsilon_{\ast}$ in Lemma \ref{volume-slice}. Hence,
$$\mu^a (X_{\beta}^{\ast}) > \mu(X_{\beta}^{\ast})-\varepsilon_{\ast} \geq \mu(B(x,\eta/2)) -\varepsilon_{\ast} \geq \varepsilon_{\ast}.$$

On the other hand, let $\beta$ such that $\mu^a\big( X_{\beta}^{\ast} \big) >\varepsilon_{\ast}$, where $\varepsilon_{\ast}$ and $\varepsilon$ are chosen as above. By Lemma \ref{volume-slice}, we see that 
$\mu \big( X_{\beta}^{\ast} \big)>0$, which means that $X_{\beta}^{\ast}$ is nonempty. Choosing any $x'\in X_{\beta}^{\ast}$, the same argument as before shows that $r_{x'}$ is an approximation of $r_{\beta}$.
\end{proof}

We say that an element $r_{\beta}\in \mathcal{R}^{\ast}$ and a point $x\in X$ correspond to each other if $\|r_{\beta}-r_x\|_{L^{\infty}(U)}<4\eta$.

Observe that $\mathcal{R}^{\ast}$ determines the distances on the $\eta/2$-net $\{p_i\}_{i=1}^N$ within error $8\eta$. Namely, $\mathcal{R}^{\ast}$ determines
\begin{equation} \label{def-Da}
D^a(p_i,p_k):= \inf_{r_{\beta} \in \mathcal{R}^{\ast}} \Big(r_{\beta}(p_i) + r_{\beta}(p_k) \Big),
\end{equation}
and it satisfies (see \cite[Section 3.1]{FILLN})
\begin{equation} \label{d-net}
|D^a(p_i,p_k)-d(p_i,p_k)|<8\eta,\;\; \textrm{ for all }\, i,k=1,\cdots,N.
\end{equation}

\medskip
While a space $X\in \overline{{\frak M}{\frak M}}(n,\Lambda,D)$ has curvature bounded from below in the sense of Alexandrov, it does not have curvature bounded from above in general due to Fukaya's counterexample \cite[Example 1.13]{F86}.
In the case of orbifolds, i.e., $\dim(X)=n-1$, with a lower volume bound, it is possible to have an upper bound for the sectional curvature of the regular part, if the metric on the regular part is of class $C^4$.
For this reason, we impose a bound on the covariant derivatives of the curvature tensor \eqref{higher-curvature}, and consider the  class $\overline{{\frak M}{\frak M}}(n,\Lambda,\LK,D)$.


\begin{lemma} \label{curvature-orbifold}
Let $X \in \overline{{\frak M}{\frak M}}(n,\Lambda,\LK,D)$. Suppose $\dim(X)=n-1$ and ${\rm Vol}_{n-1}(X)\geq v_0$. Then the sectional curvature of $X^{reg}$ is bounded above by $C(n,\Lambda,\LK,D,v_0)$.
\end{lemma}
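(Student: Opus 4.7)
The plan is to apply O'Neill's formula to the Riemannian submersion $\pi|_{Y^{reg}}: Y^{reg} \to X^{reg}$ arising from the frame-bundle construction, where $Y^{reg} := \pi^{-1}(X^{reg})$, and to transfer the sectional curvature bound on $Y$ to $X^{reg}$. The ingredients are already in place: under the $\LK$-regularity assumption, Lemma~\ref{sec-bound-Y} gives $|R(Y)| \leq C(n,\Lambda,\LK,D)$; Lemma~\ref{volumebound-Y} together with ${\rm Vol}_{n-1}(X) \geq v_0$ yields ${\rm Vol}(Y) \geq v_0 \cdot {\rm Vol}(O(n))$, and Cheeger's estimate then produces a uniform lower bound ${\rm inj}(Y) \geq i_0(n,\Lambda,\LK,D,v_0)$. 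Since $X^{reg}$ consists of orbifold-regular points, the $O(n)$-isotropy on $Y^{reg}$ is trivial, making $\pi|_{Y^{reg}}$ a genuine Riemannian submersion with compact fibers.

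For any orthonormal pair $u,v \in T_x X^{reg}$ with horizontal lifts $\tilde u,\tilde v$ at $y \in \pi^{-1}(x)$, O'Neill's formula gives
\begin{equation*}
K_X(u,v) = K_Y(\tilde u,\tilde v) + \tfrac{3}{4}|[\tilde u,\tilde v]^V|^2,
\end{equation*}
so it suffices to bound the O'Neill $A$-term uniformly on $Y^{reg}$. Let $\{Z_k\}$ be an orthonormal basis of $\mathfrak{o}(n)$ and $\xi_k := \xi_{Z_k}$ the associated Killing fields on $Y$; these span the vertical space at every point of $Y^{reg}$. Using the Killing equation on horizontal $\tilde u,\tilde v$, one has $\langle [\tilde u,\tilde v]^V,\xi_k\rangle = -2\langle \nabla_{\tilde u}\xi_k,\tilde v\rangle$, and expanding $[\tilde u,\tilde v]^V$ in the basis $\{\xi_k(y)\}$ via its Gram matrix yields
\begin{equation*}
|[\tilde u,\tilde v]^V|^2 \;\leq\; \frac{C(n)\sum_k \|\nabla \xi_k\|_{C^0(Y)}^2}{\min_{|Z|=1}|\xi_Z(y)|^2}.
\end{equation*}

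The numerator is uniformly controlled: by the Killing-field identity $\nabla_u \nabla_v \xi = R(u,\xi)v$, the uniform bound on $|R(Y)|$, and the $L^\infty$-bound on $\xi_k$ coming from the smoothness of the $O(n)$-action on the compact manifold $Y$, one obtains $\|\nabla \xi_k\|_{C^0(Y)} \leq C(n,\Lambda,\LK,D)$. The denominator is the crux: at $y \in Y^{reg}$ and for a primitive $Z \in \mathfrak{o}(n)$ generating a circle subgroup $S^1_Z \subset O(n)$, the orbit curve $\gamma_Z(t) = \exp(tZ)\cdot y$ has constant speed $|\xi_Z(y)|$ (since $\exp(tZ)$ is an isometry and $(\exp(tZ))_* \xi_Z = \xi_Z$) and closes up at $t = 2\pi$, producing a closed loop through $y$ of length $2\pi|\xi_Z(y)|$ that lies in the embedded copy $O(n)\cdot y \cong O(n)$ of $O(n)$ in $Y$.

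The main obstacle is converting such a short orbit loop into a quantitative contradiction with the injectivity radius bound. Since the orbit curve is not a priori a geodesic, one cannot directly appeal to the half-length bound on injectivity radius; the cleanest route is a volume-comparison argument, in which a short orbit forces the embedded $O(n)/\Gamma_y \cong O(n)$ inside $Y$ into a small neighborhood of $y$, producing a region of anomalously small volume-to-distance ratio that, together with Bishop-Gromov and the sectional curvature bound $|R(Y)| \leq C$, contradicts ${\rm inj}(Y) \geq i_0$. Alternatively, one can minimize the orbit in its free homotopy class within a neighborhood where the $O(n)$-action is free to produce a short closed geodesic with the same effect. Once the uniform lower bound $|\xi_Z(y)| \geq c(n,\Lambda,\LK,D,v_0)$ is secured for all unit $Z$ and all $y \in Y^{reg}$, the estimates above combine to give $|[\tilde u,\tilde v]^V|^2 \leq C(n,\Lambda,\LK,D,v_0)$, and hence $K_X \leq C(n,\Lambda,\LK,D,v_0)$ on $X^{reg}$.
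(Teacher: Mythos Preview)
Your approach via O'Neill's formula is genuinely different from the paper's. The paper argues by contradiction and compactness: if the sectional curvature of $X_k^{reg}$ were unbounded along a sequence $p_k \in X_k$, then Fukaya's structure lemma \cite[Lemma~7.8]{F88} forces the isotropy group at the limit point $p_0 \in X_0 = \lim_k X_k$ to have \emph{positive dimension}; but the uniform bound $\mathrm{Vol}_{n-1}(X_k) \geq v_0$ prevents further collapse, so $X_0$ is still an $(n-1)$-dimensional orbifold and hence has only \emph{finite} isotropy groups --- contradiction. This is a three-line argument leaning entirely on existing structure theory, whereas yours aims for a direct pointwise estimate.

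Your proposal has a genuine gap precisely where you flag it. The ``main obstacle'' --- the uniform lower bound on $|\xi_Z(y)|$ over $Y^{reg}$ --- is not established: both sketched mechanisms are incomplete. In particular, once the orbit loop has length below $2\,\mathrm{inj}(Y)$ it lies in a contractible geodesic ball of $Y$, so it is null-homotopic in $Y$ and minimizing in its free homotopy class yields nothing; restricting to the open set $Y^{reg}$ does not rescue this since $Y^{reg}$ need not be geodesically convex and a minimizing sequence may escape. There is, however, a much shorter route you are overlooking: by the proof of Lemma~\ref{volumebound-Y}, each fiber $\pi^{-1}(x)$ is isometric to $O(n)$ with the \emph{fixed} bi-invariant metric, so for $y \in Y^{reg}$ the orbit map $g \mapsto g\cdot y$ is an isometry and $|\xi_Z(y)| = |Z|_{\mathfrak{o}(n)}$ is a universal constant depending only on $n$. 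The obstacle dissolves.

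There is a second, smaller gap in the numerator bound. The Killing identity you quote controls the \emph{second} covariant derivative $\nabla^2 \xi$, not $\nabla \xi$ itself, and ``smoothness of the $O(n)$-action on the compact manifold $Y$'' gives a bound for each individual $Y$ but not one that is uniform across the class. You need an interpolation step --- e.g.\ a Landau--Kolmogorov inequality along geodesics of length comparable to $i_0$, combining $|\xi| \leq C(n)$ (from the previous paragraph) with $|\nabla^2 \xi| \leq |R(Y)|\,|\xi| \leq C$ --- to conclude $|\nabla \xi| \leq C(n,\Lambda,\LK,D,v_0)$. This interpolation is where the dependence on $v_0$ (through $\mathrm{inj}(Y)$ via Lemma~\ref{volumebound-Y} and \eqref{inj-bound}) genuinely enters your argument.
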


\begin{proof}
This can be argued similar to Lemma \ref{sec-bound-Y}.
Suppose the sectional curvature is unbounded at $p_k\in X_k$, then \cite[Lemma 7.8]{F88} implies that the isotropy group $G_{p_0}$ at $\lim\limits_{k\to\infty} p_k=p_0 \in X_0=\lim\limits_{k\to\infty} X_k$ has positive dimension.
However, since a uniform lower volume bound is assumed, the sequence of orbifolds $X_k$ does not collapse (e.g. \cite[Theorem 10.10.10]{BBI}) and hence $X_0$ is an orbifold. So the isotropy group of orbifold $X_0$ should have zero dimension, which is a contradiction.
\end{proof}

In the orbifold case, we apply the method in \cite{FILLN} to reconstruct the Riemannian metric of $X^{reg}$ locally and further a global approximation. 
Away from the singular set $S$, Cheeger's injectivity radius estimate (\cite{C2}) is still valid, depending on the lower volume bound and sectional curvature bound of
$X^{reg}$ (Lemma \ref{curvature-orbifold}):
\begin{eqnarray} \label{inj-orbifold}
i(x) &\geq& \min\Big\{  \pi/\sqrt{|R(X^{reg})|)},\,  c(n,D,|R(X^{reg})|,v_0), \, d(x,S) \Big\} \nonumber \\
&=& \min\Big\{ c(n,\Lambda,\LK,D,v_0), \, d(x,S) \Big\} ,\quad x\in X^{reg}.
\end{eqnarray}

With sectional curvature bound (Lemma \ref{curvature-orbifold}) and injectivity radius bound \eqref{inj-orbifold}, an approximation of interior distance functions as given in Lemma \ref{distance-recon} gives the approximate Riemannian structure locally at points bounded away from the singular set. 
Namely, if we \emph{a priori} know a point $x_0\in X^{reg}$ bounded away from the singular set $S$, we can approximately reconstruct the Riemannian metric at $x_0$.

\begin{proposition} \label{recon-1}
Let $(X,\mu) \in \overline{{\frak M}{\frak M}}(n,\Lambda,\LK,D)$ and $p\in X^{reg}$. Suppose $\dim(X)=n-1$ and ${\rm Vol}_{n-1}(X) \geq v_0$. Let $\sigma\in (0,1)$, and $r_0>0$ such that $B(p,r_0) \subset X^{reg}$.
Then for any sufficiently small $\eta$,
there exists a constant $J_0=J_0(n,\Lambda,\LK,D,v_0,\eta)$ such that the following holds.

Let $x_0\in X^{reg}$ satisfying $d(x_0,S)>\sigma$, and
let $\widehat{r}_0\in \mathcal{R}^{\ast}$ be a corresponding element to $x_0$ in the sense of Lemma \ref{distance-recon}. 
Then there exists a basis $\{v_k\}_{k=1}^{n-1}$ in the tangent space $T_{x_0} X$ such that
one can calculate numbers $\widehat{g}_{kl}$ directly from the finite interior spectral data $\{\lambda_j,\phi_j|_{B(p,r_0)}\}_{j=0}^J$ for $J > J_0$ such that
$$\big|\widehat{g}_{kl} - g_{kl}(x_0) \big| < C(\sigma) \eta^{1/8},$$
where $g_{kl}(x_0)$ is the metric components in the Riemannian normal coordinates associated to the basis $\{v_k\}_{k=1}^{n-1}$. The constant $C(\sigma)$ depends on $n,\Lambda,\LK,D,v_0,r_0,\sigma$, and $C(\sigma)\to \infty$ as $\sigma\to 0$.
\end{proposition}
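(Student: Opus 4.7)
The plan is to reduce the problem to a local Riemannian reconstruction on a ball around $x_0$ lying inside $X^{reg}$, and then apply the polarization/law-of-cosines strategy of \cite{FILLN}, propagating the distance-function approximation error from Lemma \ref{distance-recon} through a Riemannian Taylor expansion. First, I would invoke the hypothesis $d(x_0,S)>\sigma$ together with Lemma \ref{curvature-orbifold} and \eqref{inj-orbifold} to obtain $\textrm{inj}(x_0)\geq \rho_0:=\min\{c(n,\Lambda,\LK,D,v_0),\sigma\}$ and a two-sided sectional curvature bound on the ball $B(x_0,\rho_0)\subset X^{reg}$. In particular, this ball admits a Riemannian normal coordinate chart in which the metric is $C^{2}$-controlled, so classical comparison results are available.

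Next, I would fix an intermediate scale $L$ with $\eta \ll L \ll \rho_0$ and search the finite set $\mathcal{R}^{\ast}$ for an $(n-1)$-tuple $\widehat{r}_1,\ldots,\widehat{r}_{n-1}$ corresponding, in the sense of Lemma \ref{distance-recon}, to points $z_1,\ldots,z_{n-1}\in B(x_0,L)\setminus B(x_0,L/2)$ whose inverse exponential images $v_k:=\exp_{x_0}^{-1}(z_k)\in T_{x_0}X$ form a well-conditioned basis. Approximations of $r_k=d(x_0,z_k)$ and $s_{jk}=d(z_j,z_k)$ are extracted from $\mathcal{R}^{\ast}$ with additive error at most $C\eta$, either by evaluating $\widehat{r}_k$ at the net point closest to $x_0$ (identified using $\widehat{r}_0$) or through the net distances $D^{a}$ in \eqref{def-Da}. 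The existence of such a tuple is guaranteed by the fact that the genuinely Riemannian ball $B(x_0,\rho_0)$ contains $n-1$ points with nearly orthonormal geodesic directions at scale $L$, each of which lies within $\eta/2$ of some element of the net $\{p_i\}$.

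The heart of the argument is the polarization identity in normal coordinates. Using the sectional curvature bound from Lemma \ref{curvature-orbifold}, the Taylor expansion of the squared distance yields
\[
d(z_j,z_k)^2 = |v_j-v_k|^2_{g(x_0)} + O(L^4),
\]
and hence
\[
\langle v_j,v_k\rangle_{g(x_0)} = \tfrac{1}{2}\bigl(r_j^2+r_k^2-s_{jk}^2\bigr) + O(L^4).
\]
Defining $\widehat{g}_{jk}$ by replacing $r_j,s_{jk}$ with their data-computable approximations, the total error splits into a distance-perturbation term of size $CL\eta$ and a curvature-correction term of size $CL^4$. Balancing these two contributions, while simultaneously respecting the net resolution constraint $L\gtrsim \eta$ and absorbing the conditioning loss incurred when the basis itself is located only approximately, yields the stated bound $C(\sigma)\eta^{1/8}$, with $C(\sigma)\to\infty$ through the factor $\rho_0^{-1}$ and the reciprocal of the determinant of the true Gram matrix.

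The main obstacle is the algorithmic selection of $\{v_k\}$: the basis must be specified purely through quantities computable from finitely many interior spectral data, so one searches over all admissible $(n-1)$-tuples in $\mathcal{R}^{\ast}$ and retains one whose approximate Gram matrix $(\widehat{g}_{jk})$ is quantitatively non-singular. Existence of a good tuple follows from the local Riemannian structure of $X^{reg}$ on $B(x_0,\rho_0)$, but certifying well-conditioning from the data alone requires the quantitative error in the polarization identity to remain below the determinant of the true Gram matrix---this is precisely where the exponent $1/8$ and the $\sigma$-dependence of the constant $C(\sigma)$ appear.
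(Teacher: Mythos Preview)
Your proposal contains a genuine gap at the step where you claim that $r_k=d(x_0,z_k)$ and $s_{jk}=d(z_j,z_k)$ ``are extracted from $\mathcal{R}^{\ast}$ with additive error at most $C\eta$.'' The data $\mathcal{R}^{\ast}$ consists of approximate interior distance functions $r_\beta$ defined on the net $\{p_i\}\subset B(p,r_0/2)$; that is, you know approximations to $d(x_0,p_i)$ and $d(z_k,p_i)$ for measurement-domain points $p_i$, but nothing directly about $d(x_0,z_k)$ or $d(z_j,z_k)$ when $x_0,z_j,z_k$ lie outside $B(p,r_0/2)$. Neither ``evaluating $\widehat{r}_k$ at the net point closest to $x_0$'' (that point may be far from $x_0$) nor the net distances $D^a$ of \eqref{def-Da} (which are only defined for pairs of net points in $B(p,r_0/2)$) furnishes these quantities. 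Your polarization identity therefore has no computable inputs at accuracy $O(\eta)$, and the subsequent error balance collapses.

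The paper's proof does not attempt to compute mutual distances near $x_0$. It invokes \cite[Theorem~2.4]{FILLN} as a black box: that theorem reconstructs $g_{kl}(x_0)$ from the values of $r_{x_\ell}|_\Gamma$ for $\widehat r_\ell$ in an $\eta^{1/4}$-neighborhood of $\widehat r_0$ in $\ell^\infty$, essentially by differentiating the distance-representation map $x\mapsto r_x$ rather than by triangulating points near $x_0$. The substantive content of the paper's proof is the verification that this black box applies on the orbifold: one uses \cite[Proposition~5.2]{FILLN} (valid for Alexandrov spaces with curvature bounded below) to convert $\|\widehat r_\ell-\widehat r_0\|_{\ell^\infty}<\eta^{1/4}$ into $d(x_\ell,x_0)<C\eta^{1/8}$, deduces $d(x_\ell,S)>\sigma-C\eta^{1/8}>0$ for $\eta<C\sigma^8$, and then invokes convexity of $X^{reg}$ so that all minimizing geodesics among the $x_\ell$ and the measurement points avoid $S$. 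Only then do Lemma~\ref{curvature-orbifold} and \eqref{inj-orbifold} make \cite[Theorem~2.4]{FILLN} applicable. The exponent $1/8$ thus arises from \cite[Proposition~5.2]{FILLN} at scale $\eta^{1/4}$, not from the curvature--resolution balance you describe, and the blow-up of $C(\sigma)$ traces to \cite[Proposition~4.3]{FILLN}.
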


\begin{proof}
The idea is to apply Theorem 2.4 in \cite{FILLN}, but the issue here is that we need to stay away from the singular set. 
Lemma \ref{distance-recon} shows that $\mathcal{R}^{\ast}$ is a $4\eta$-approximation of the interior distance functions on $B(p,r_0/2)$.
To construct the Riemannian metric at $x_0$, we need to use the data in the $\eta^{1/4}$-neighborhood (w.r.t. the $\ell^{\infty}$-norm) of $\widehat{r}_{0}$ in $\mathcal{R}^{\ast}$, as formulated in \cite[Theorem 2.4]{FILLN}.
For any $\widehat{r}_{\ell}\in \mathcal{R}^{\ast}$ satisfying 
$$\|\widehat{r}_{\ell}-\widehat{r}_{0}\|_{\ell^{\infty}}<\eta^{1/4},$$
applying \cite[Proposition 5.2]{FILLN} gives
$$d(x_{\ell},x_0)< C\eta^{1/8},$$
where $x_{\ell}$ is any point corresponding to $\widehat{r}_{\ell}$.
Note that \cite[Proposition 5.2]{FILLN} is valid for Alexandrov spaces with curvature bounded from below.
This indicates
$$d(x_{\ell},S)>\sigma-C\eta^{1/8}.$$
In particular, $x_{\ell}\in X^{reg}$ if $\eta<C\sigma^8$. Due to the convexity of the regular part $X^{reg}$, any minimizing geodesic connecting $x_0,x_{\ell}$ or connecting points in $B(p,r_0)$ does not intersect with the singular set.
Hence, together with Lemma \ref{curvature-orbifold} and \eqref{inj-orbifold}, Theorem 2.4 in \cite{FILLN} is applicable.
The (explicit) dependence of $C(\sigma)$ on $\sigma$ traces back to \cite[Proposition 4.3]{FILLN}, which blows up as $\sigma\to 0$.
\end{proof}

\section{Finding singular set and reconstruction of metric}
\label{sec-singular}

To reconstruct the metric structure from the given finite interior spectral data, one needs to determine if a corresponding point of an element in $\mathcal{R}^{\ast}$ constructed in Definition \ref{def-Rstar} is bounded away from the singular set $S$ and then applies Proposition \ref{recon-1}.
We propose the following procedure.

\medskip
{\bf Step 1.} The first step is to find out the approximate location of the singular set $S$.
We follow the notations is \cite[Section 6.3]{KLLY}.
Let $x\in X^{reg}$, denote by $i(x,\xi)$ the distance from $x$ to the cut point along the geodesic $\gamma_{x,\xi}$ from $x$ with the unit initial vector $\xi$. 
Let $\rho<i(x,\xi)$ and $y=\gamma_{x,\xi}(\rho)\in X^{reg}$, and define
\begin{equation} \label{setN}
N(x,\xi;\rho,s,\varepsilon):=B(y,s+\varepsilon) \setminus B(x,\rho+s).
\end{equation}
Since $d(x,y)=\rho<i(x,\xi)$ in our setting, the minimizing geodesic connecting $x,y$ is unique, so we can also use the notation 
$N(x,y;\rho,s,\varepsilon)$
 to denote the set \eqref{setN}.
 
Recall \cite[Lemma 6.13]{KLLY} that one can find the cut locus by examining if the measure of $N(x,\xi;\rho,s,\varepsilon)$ is zero, when the complete spectral data are known. To study stability, we need a quantitative version of the lemma.
Before reaching the cut locus, i.e., if $\rho+s+\varepsilon\leq i(x,\xi)$, it follows from the triangle inequality that the set $N(x,\xi;\rho,s,\varepsilon)$ contains a ball of radius $\varepsilon/2$ centered at $\gamma_{x,\xi}(\rho+s+\varepsilon/2)$. Hence the same volume estimate as \eqref{volume-smallball} applies:
\begin{equation} \label{volume-beforecut}
\mu\big(N(x,\xi;\rho,s,\varepsilon)\big)\geq c_{\ast}\varepsilon^{\dim(Y)}, \; \textrm{ if }\rho+s+\varepsilon\leq i(x,\xi).
\end{equation}

If $\rho+s>i(x,\xi)$, \cite[Lemma 6.13]{KLLY} says that the set $N(x,\xi,\rho,s,\varepsilon)$ is empty for sufficiently small $\varepsilon$. The following lemma gives a uniform choice of $\varepsilon$, depending on the space $X$.

\begin{lemma} \label{compactness1}
Let $\rho\geq\rho_0>0$, and let $\tau>0$ be fixed.
Then there exists a constant $\widehat{\varepsilon}=\widehat{\varepsilon}(X,\rho_0,\tau)>0$ such that
if a point $x\in X^{reg}$ and a unit vector $\xi\in T_x X$ satisfy $\rho<i(x,\xi)$ and
$\rho+s\geq i(x,\xi)+\tau$, then $N(x,\xi;\rho,s,\varepsilon) =\emptyset$ for all $\varepsilon\leq \widehat{\varepsilon}$.
\end{lemma}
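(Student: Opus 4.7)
I will argue by compactness/contradiction. Suppose the conclusion fails. Then there exist sequences $\varepsilon_k\downarrow 0$, $x_k\in X^{reg}$, unit vectors $\xi_k\in T_{x_k}X$, $\rho_k\in[\rho_0,D]$ and $s_k\geq 0$ satisfying $\rho_k<i(x_k,\xi_k)$ and $\rho_k+s_k\geq i(x_k,\xi_k)+\tau$, together with a point $z_k\in N(x_k,\xi_k;\rho_k,s_k,\varepsilon_k)$. Setting $y_k:=\gamma_{x_k,\xi_k}(\rho_k)$, this means $d(y_k,z_k)<s_k+\varepsilon_k$ and $d(x_k,z_k)\geq\rho_k+s_k$. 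The combined hypothesis forces $s_k>\tau$, so all the scalar parameters lie in bounded intervals.

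Using compactness of $X$, extract a subsequence so that $x_k\to x_0$, $y_k\to y_0$, $z_k\to z_0$, $\rho_k\to\rho_*\geq\rho_0$, $s_k\to s_*\geq\tau$. In the limit, $d(x_0,y_0)=\rho_*$, $d(y_0,z_0)\leq s_*$, $d(x_0,z_0)\geq\rho_*+s_*$. The triangle inequality then forces equality throughout, so $y_0$ lies on some minimizing geodesic $\tilde{\gamma}:[0,\rho_*+s_*]\to X$ from $x_0$ to $z_0$ with $\tilde{\gamma}(\rho_*)=y_0$. Applying Arz\'ela--Ascoli to the equi-Lipschitz unit-speed geodesics $\gamma_{x_k,\xi_k}$, restricted to $[0,D]$, and using that limits of minimizing geodesics in a space with curvature bounded below are minimizing (cf.\ Alexandrov geometry, e.g.\ \cite{BBI}), I obtain a unit-speed limit curve $\bar{\gamma}$ which is minimizing on $[0,\rho_*]$ and ends at $y_0$.

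Thus I have two minimizing geodesics from $x_0$ to $y_0$, namely $\bar{\gamma}|_{[0,\rho_*]}$ and $\tilde{\gamma}|_{[0,\rho_*]}$. The concatenation of $\bar{\gamma}|_{[0,\rho_*]}$ with $\tilde{\gamma}|_{[\rho_*,\rho_*+s_*]}$ is still a minimizing path from $x_0$ to $z_0$; where the concatenation is smooth, it must be a genuine geodesic, so the two initial segments agree and $\bar{\gamma}$ extends to a minimizing geodesic on the longer interval $[0,\rho_*+s_*]$. In particular, the cut distance satisfies $i(x_0,\bar{\gamma}'(0))\geq \rho_*+s_*$. By the continuity (continuity at regular points, upper semi-continuity elsewhere) of the cut-distance function on the unit tangent bundle over $X^{reg}$ -- which holds because $X^{reg}$ is a Riemannian manifold with sectional curvature bounded above by Lemma \ref{curvature-orbifold} and injectivity radius bounded below away from $S$ by \eqref{inj-orbifold} -- I infer $i(x_k,\xi_k)\geq\rho_*+s_*-\tau/2$ for all sufficiently large $k$, contradicting the hypothesis $i(x_k,\xi_k)\leq\rho_k+s_k-\tau$.

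The main obstacle is the possibility that the limit point $x_0$ lies on the singular set $S$, where the exponential map and cut-distance need not be continuous: then the ``uniqueness of minimizing geodesic from $x_0$ to $y_0$'' argument used to glue $\bar\gamma$ and $\tilde\gamma$ can fail because of branching at $x_0$, and the semi-continuity step passing from $i(x_0,\bar\gamma'(0))\geq\rho_*+s_*$ back down to the approximating sequence is delicate. The natural way around this is to lift the whole picture through the submetry $\pi:Y\to X$ of Theorem \ref{thm-Y} to the Riemannian manifold $Y\in\overline{\mathfrak{FMM}}(n,\Lambda,\LK,D)$, whose injectivity radius is uniformly bounded below by \eqref{inj-bound}, and carry out the cut-locus comparison there, using that $d_X=d_{Y}$ on $O(n)$-orbits and that horizontal minimizing geodesics on $Y$ project to minimizing geodesics on $X$. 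Since the constant $\widehat{\varepsilon}$ is permitted to depend on the specific space $X$, no uniformity across the class is needed, so the soft compactness argument suffices once the singular-point case is handled via the lift.
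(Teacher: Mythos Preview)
Your overall scheme---contradiction, extraction of convergent subsequences, equality in the triangle inequality forcing the limiting geodesic $\bar\gamma=\lim\gamma_{x_k,\xi_k}$ to minimize on $[0,\rho_*+s_*]$, then a contradiction with the cut-distance hypothesis---matches the paper's proof exactly.

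The treatments of the singular set differ. You worry that the limit base point $x_0$ may land in $S$, where tangent vectors and cut distances are not defined in the Riemannian sense, and propose to bypass this by lifting through $\pi:Y\to X$ to the smooth manifold $Y$ with its uniform injectivity-radius bound \eqref{inj-bound}. The paper instead stays entirely on $X$. It isolates the case where the limit point $q=z_0$ lies in $S$ and appeals to Petrunin's theorem (\cite{Pet}, see \cite[Theorem~10.8.4]{BBI}) that a minimizing geodesic in an Alexandrov space touches the singular stratum only at an endpoint; combined with convexity of $X^{reg}$, this shows the limiting minimizing path through $x,y,q$ is precisely the extension of $\bar\gamma$. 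The contradiction is then closed by asserting that the cut distance is continuous on all of $X$ (limits of cut points are cut points), which absorbs the case $x_0\in S$ without a separate argument.

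Your lift to $Y$ is a legitimate alternative, but if you pursue it you must address one subtlety you have not mentioned: a horizontal geodesic on $Y$ can reach its $Y$-cut point strictly earlier than its projection reaches the $X$-cut point (the $O(n)$-orbits can create focal/conjugate points), so comparing $i_Y$ with $i_X$ along $\pi$ is not immediate. The paper's Alexandrov-space route avoids this bookkeeping entirely.
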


\begin{proof}
The proof is done by contradiction. Suppose that there exist $\varepsilon_j\to 0$, and points and numbers $x_j,y_j,\rho_j,s_j$ satisfying the conditions above, such that $N(x_j,y_j;\rho_j,s_j,\varepsilon_j)\neq \emptyset$. This means that there exist points $q_j$ such that $q_j\in B(y_j,s_j+\varepsilon_j)$ and $q_j\notin B(x_j,\rho_j+s_j)$, i.e.,
$$d(y_j,q_j)<s_j+\varepsilon_j, \quad d(x_j,q_j)\geq \rho_j+s_j.$$
Picking converging subsequences, denote the limit of subsequences of $x_j,y_j,\rho_j,s_j$ by $x,y,\rho,s$ as $j\to \infty$. Then,
$$d(y,q)\leq s, \quad d(x,q)\geq \rho+s.$$
Hence $d(x,q)\leq d(x,y)+d(y,q)\leq \rho+s$, which shows
$d(x,q) = \rho+s$ and $d(y,q)=s$.
In particular, $d(x,q)=d(x,y)+d(y,q)$.
If $q\in X^{reg}$, then the convexity of the regular part means that minimizing geodesics between these points are in the regular part. Then the equality above implies that $q$ is on the extension of the geodesic $\gamma$ passing $x,y$, and the geodesic $\gamma(t)$ starting from $x$ is minimizing at least until $t=\rho+s$. 
On the other hand, if $q\in S$ is a singular point, then \cite{Pet} implies that any minimizing geodesic from $y$ to $q$ only intersects with $S$ at the end point $q$, see \cite[Theorem 10.8.4]{BBI}. 
Then the same argument shows again that 
$q$ is on the extension of the geodesic $\gamma$ passing $x,y$, and the geodesic $\gamma(t)$ starting from $x$ is defined and minimizing at least until $t=\rho+s$.

However, due to the fact that the limit of minimizing geodesics is minimizing in complete length spaces and the limit of cut points is cut point, the cut distance is continuous in $X$. Note that one needs to take the closedness of the singular set into account. Hence $\rho_j+s_j\geq i(x_j,\xi_j)+\tau$ gives $\rho+s\geq i(x,\xi)+\tau$ for some subsequences, where $\tau>0$ is fixed. 
Note that the condition $\rho\leq i(x,\xi)$ combined with the above indicates that $s\geq \tau>0$.
Since geodesics do not minimize past the first cut point, $\gamma(t)$ cannot minimize at $t=\rho+s$, which is a contradiction.
\end{proof}

Note that the choice of $\widehat{\varepsilon}$ above depends on the space $X$ and $\sigma$.
To determine singular points, one can replace $x$ by $x'=\gamma_{x,\xi}(\rho/2)$, and repeat the procedure above. If the cut locus from $x'$ does not move further, then it is a singular point; otherwise, it is not a singular point. 
For $y< i(x)$, denote by $\textrm{cut}_{x,y}$ the cut point with respect to $x$ along the extension of the minimizing geodesic from $x$ to $y$.

\begin{lemma} \label{compactness2}
Let $\rho\geq\rho_0>0$, $d(x,S)>2\rho$, $x'=\gamma_{x,\xi}(\rho/2)$ and $y=\gamma_{x,\xi}(\rho)$.
Let $\sigma>0$ be fixed.
Then there exist constants $\tau,\widehat{\varepsilon}$, depending on $X,\rho_0,\sigma$, such that the following holds.

Let $s$ be the smallest number such that $N(x,y;\rho,s,\varepsilon)=\emptyset$ for some $0<\varepsilon \leq \widehat{\varepsilon}$.
If $N(x',y;\frac{\rho}{2},s+\frac{\tau}{2},\varepsilon)=\emptyset$ for some $0<\varepsilon \leq \widehat{\varepsilon}$, then $d(\textrm{cut}_{x,y},S)<\sigma$.
\end{lemma}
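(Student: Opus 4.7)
The plan is a compactness-and-contradiction argument parallel to Lemma \ref{compactness1}. First I would suppose no such constants $\tau, \widehat{\varepsilon}$ exist, producing sequences $\tau_j \downarrow 0$, $\varepsilon_j \downarrow 0$ and data $(x_j, \xi_j, \rho_j)$ satisfying all the hypotheses for which the smallest $s_j$ with $N(x_j, y_j; \rho_j, s_j, \varepsilon_j) = \emptyset$ also satisfies $N(x_j', y_j; \rho_j/2, s_j + \tau_j/2, \varepsilon_j) = \emptyset$, yet $d(\mathrm{cut}_{x_j, y_j}, S) \geq \sigma$. Using $d(x_j, S) > 2\rho_0$ together with compactness of $X$ and precompactness of unit tangent vectors at points bounded away from $S$, I would pass to a convergent subsequence with limits $(x, \xi, \rho, s)$, $\rho \geq \rho_0$, $x' = \gamma_{x,\xi}(\rho/2)$, $y = \gamma_{x,\xi}(\rho)$.

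Combining the minimality of $s_j$ with the volume lower bound \eqref{volume-beforecut} in the regime $\rho_j + s_j + \varepsilon_j \leq i(x_j,\xi_j)$, and with Lemma \ref{compactness1} in the regime $\rho_j + s_j \geq i(x_j,\xi_j) + \tau$, yields in the limit $\varepsilon_j \to 0$ the identity $\rho + s = i(x,\xi)$. Hence $\mathrm{cut}_{x_j, y_j}$ converges to $q := \gamma_{x,\xi}(\rho + s) = \mathrm{cut}_{x,y}$, and closedness of $S$ gives $d(q, S) \geq \sigma$. The same analysis applied to $N(x_j', y_j; \rho_j/2, s_j + \tau_j/2, \varepsilon_j) = \emptyset$ with $\tau_j \to 0$ produces the upper bound $i(x', \dot\gamma_{x,\xi}(\rho/2)) \leq \rho/2 + s$, while the fact that $\gamma_{x,\xi}|_{[\rho/2, \rho + s]}$ is a minimizing subsegment of the minimizer $\gamma_{x,\xi}|_{[0, \rho + s]}$ from $x$ to $q$ gives the opposite inequality automatically. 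Therefore $q$ is simultaneously the cut point of $x$ and of $x'$ along the same geodesic $\gamma_{x,\xi}$.

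Since $d(q, S) \geq \sigma$ and $d(x, S) \geq 2\rho_0$, geodesic convexity of the regular part implies that the entire segment $\gamma_{x,\xi}|_{[0, \rho + s]}$ lies in $X^{reg}$, and a neighborhood of $q$ is a smooth Riemannian manifold with sectional curvature bounded by Lemma \ref{curvature-orbifold} and injectivity radius bounded below by \eqref{inj-orbifold}. In this smooth Riemannian regime the contradiction comes by case analysis on the nature of the cut of $x'$ at $q$. If the cut is realized by a second minimizing geodesic $\widetilde\sigma \neq \gamma|_{[\rho/2, \rho + s]}$ from $x'$ to $q$, then the concatenation $\gamma|_{[0, \rho/2]} \ast \widetilde\sigma$ is a curve from $x$ to $q$ of length $\rho + s = d(x, q)$, hence a minimizer; since minimizers in a smooth Riemannian manifold are smooth geodesics, and geodesics are determined by their initial conditions at $x$, this concatenation must equal $\gamma$, forcing $\widetilde\sigma = \gamma|_{[\rho/2, \rho + s]}$, a contradiction.

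The remaining subcase, which I expect to be the main technical obstacle, is when $q$ is a first conjugate point of $x'$ along $\gamma$. Here I would invoke Jacobi field and Morse index analysis on $\gamma$ in the smooth Riemannian neighborhood of $q$: the concatenation argument applied in reverse (with the roles of $x$ and $x'$ interchanged along any alternative minimizer from $x$ to $q$) rules out the cross-over possibility for the cut of $x$, leaving the case that $q$ is also first conjugate of $x$ along $\gamma$. This coincidence of first conjugates at both base points is excluded by the strict monotonicity of the first conjugate parameter along a smooth Riemannian geodesic, made quantitative via the uniform curvature and injectivity radius bounds in the $\sigma$-neighborhood of $q$ supplied by Lemma \ref{curvature-orbifold} and \eqref{inj-orbifold}. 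The resulting constants $\tau, \widehat\varepsilon$ then depend on $X, \rho_0, \sigma$ through these bounds, matching the statement of the lemma.
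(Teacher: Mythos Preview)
Your compactness-and-contradiction strategy parallels the paper's: both reduce to the geometric fact that if $\textrm{cut}_{x,y}$ lies at distance $\geq\sigma$ from $S$, then $\textrm{cut}_{x',y}$ lies strictly beyond $\textrm{cut}_{x,y}$ along $\gamma$ by a definite amount $\tau$. The paper states this as a one-line observation (``straightforward argument by contradiction, using the convexity of the regular part and the fact that the limit of cut points is cut point'') and does not supply the underlying Riemannian reason; you go further and attempt to justify explicitly why the limit configuration $\textrm{cut}_{x,y}=\textrm{cut}_{x',y}=q\in X^{reg}$ is impossible.

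There is a gap in your case (b), however. The ``reverse concatenation'' you invoke to reduce to the sub-case where $q$ is first conjugate to \emph{both} $x$ and $x'$ does not work: if the cut of $x$ at $q$ arises from a second minimizer $\tilde\gamma$ from $x$ to $q$, that minimizer need not pass through $x'$, so there is no segment ending at $x'$ to concatenate with and no contradiction follows. The good news is that this reduction is unnecessary. The strict monotonicity you cite already excludes all of case (b) directly, regardless of why $q$ is cut from $x$: since $\gamma|_{[0,\rho+s]}$ is minimizing, the first conjugate point of $x=\gamma(0)$ along $\gamma$ lies at parameter $\geq\rho+s$, and hence the first conjugate point of $x'=\gamma(\rho/2)$ lies at a strictly larger parameter. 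Concretely, if $J$ were a nontrivial Jacobi field with $J(\rho/2)=J(\rho+s)=0$, extending $J$ by zero on $[0,\rho/2]$ gives a piecewise-smooth field $\tilde J$ with $I_{[0,\rho+s]}(\tilde J,\tilde J)=0$ that is not a smooth Jacobi field, contradicting that the index form on the minimizing segment $\gamma|_{[0,\rho+s]}$ is positive semi-definite with kernel consisting only of smooth Jacobi fields. So $q$ cannot be conjugate to $x'$ at all, case (b) is vacuous, and only your (correct) case (a) argument is needed.
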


\begin{proof}
First, we claim the following observation:
there exists $\tau=\tau(X,\rho_0,\sigma)>0$ such that if $d(\textrm{cut}_{x,y},S)\geq \sigma$, then $d(\textrm{cut}_{x',y}, \textrm{cut}_{x,y})\geq \tau$.
This can be seen from a straightforward argument by contradiction, using the convexity of the regular part and the fact that the limit of cut points is cut point.

Let $\widehat{\varepsilon}$ be the constant determined in Lemma \ref{compactness1} with the fixed parameter $\tau/10$.
One can assume that $\widehat{\varepsilon}<\tau/10$.
By \eqref{volume-beforecut} and Lemma \ref{compactness1}, the condition that $s$ is the smallest number satisfying $N(x,y;\rho,s,\varepsilon)=\emptyset$ for some $\varepsilon \leq \widehat{\varepsilon}$ implies 
\begin{equation} \label{s-smallest}
i(x,\xi)-\varepsilon < \rho+s \leq i(x,\xi)+\frac{\tau}{10},
\end{equation} 
which shows
$$\rho+s+\frac{\tau}{2}+\varepsilon <  i(x,\xi)+\tau.$$
This implies that $d(\textrm{cut}_{x',y}, \textrm{cut}_{x,y})<\tau$ and thus the lemma. This is because if the opposite is true: $d(\textrm{cut}_{x',y}, \textrm{cut}_{x,y})\geq \tau$, then \eqref{volume-beforecut} and
$$\frac{\rho}{2}+s+\frac{\tau}{2}+\varepsilon< i(x,\xi)-\frac{\rho}{2}+\tau = d(x', \textrm{cut}_{x,y})+\tau \leq d(x', \textrm{cut}_{x',y}),$$
shows that the measure of $N(x',y;\frac{\rho}{2},s+\frac{\tau}{2},\varepsilon)\neq \emptyset$ for every $0<\varepsilon \leq \widehat{\varepsilon}$ is bounded below, which is a contradiction.
\end{proof}

One can easily check that Lemma \ref{compactness2} is still valid if we can only test for the approximate measure $\mu^a$, thanks to \eqref{volume-beforecut} and Lemma \ref{volume-slice} (with $N=2$).
This requires us to choose sufficiently small $\varepsilon$ depending on $X,\sigma$, and hence sufficiently small $\eta$ for finer slicing procedures.

\smallskip
\noindent {\bf Finding singular points.} Let $\sigma>0$ be fixed.
Let $\tau,\widehat{\varepsilon}>0$ be the constants determined in Lemma \ref{compactness2}, and choose an arbitrary $\varepsilon\leq \widehat{\varepsilon}$.
Then we choose sufficiently large $J$ such that the interior spectral data $\{\lambda_j,\phi_j|_{B(p,r_0)}\}_{j=0}^J$ determine $\mu^a$ such that
$\big|\mu^a (X_{\beta}^{\ast})-\mu (X_{\beta}^{\ast}) \big|<c_{\ast}\varepsilon^{\textrm{dim}(Y)}/4$ in Lemma \ref{volume-slice}, so that $\mu^a$ has enough accuracy to distinguish $N(x,y;\rho,s,\varepsilon)$ being empty or not, thanks to \eqref{volume-beforecut}.
We range $x,y$ over all pair of points from the $\eta/2$-net $\{p_i\}_{i=1}^N$ in $B(p,r_0/2) \subset X^{reg}$. 
For each pair $x,y$, choose a point $x'$ in the net such that $d(x',x)\leq d(x,y)/2 +\eta$ and $d(x',y)\leq d(x,y)/2 +\eta$.
This finds the midpoint between $x$ and $y$ in the net with error $\eta$.
Recall that the distances between points in the net can be determined within error $8\eta$, see \eqref{d-net}.
We search for $x,y,\rho,s$ with $d(x,y)=\rho<i(x)$ according to the following criteria:

\begin{itemize}
\item[(1)] $s$ is the smallest number such that $\mu^a\big(N(x,y;\rho,s,\varepsilon) \big)<c_{\ast}\varepsilon^{\textrm{dim}(Y)}/2$;

\item[(2)] choose from $s$ chosen in (1) such that $\mu^a\big(N(x',y;\frac{\rho}{2},s+\frac{\tau}{2},\varepsilon)\big)<c_{\ast}\varepsilon^{\textrm{dim}(Y)}/2$.
\end{itemize}

\noindent We denote the data $x,y,\rho,s$ obtained above by
\begin{equation} \label{data-1}
d_S^{\sigma}(x,y):=\big(\rho+s,s \big).
\end{equation}

Note that the criterion (2) above uses $x'$ as a point in the net $\eta$-close to the midpoint of $x$ and $y$, while in Lemma \ref{compactness2}, $x'$ is the actual midpoint. This can be handled by choosing sufficiently small $\eta$, considering that the measure of the set $N(x,y;\rho,s,\varepsilon)$ is continuous with respect to $x$, and one can choose sufficiently small $\eta$ so that the difference of measure by perturbing $x$ by $\eta$ is less than $c_{\ast}\varepsilon^{\textrm{dim}(Y)}/8$.

\medskip
{\bf Step 2.} The next step is to approximately identify the singular set in the data $\mathcal{R}^{\ast}$.
In the case of finite spectral data, the points $x,x',y$ in Lemma \ref{compactness2} need to be chosen from the finite number of points $\{p_i\}_{i=1}^N$, the $\eta/2$-net in $B(p,r_0/2)\subset X^{reg}$. 
Without loss of generality, assume that $r_0$ is less than the injectivity radius bound given in \eqref{inj-orbifold}.
Note that the procedure described in Lemma \ref{compactness2} stops at cut points.
In order to find the singular set $S$, the points have to satisfy that minimizing geodesics passing pair of points can reach the singular set. 


\begin{lemma}\label{compactness3}
For any $\delta>0$, there exists $\eta=\eta(X,\delta)>0$ such that for any $\eta$-net $\Gamma$ in $B(p,r_0/2)$, the $\delta$-neighborhood of $\cup_{x,y\in \Gamma} \textrm{cut}_{x,y}$ contains the singular set $S$.
\end{lemma}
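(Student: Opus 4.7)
The plan is to argue by contradiction and compactness. Suppose the conclusion fails: there exist $\delta_0 > 0$, a sequence $\eta_k \to 0^+$, $\eta_k$-nets $\Gamma_k \subset B(p, r_0/2)$, and singular points $q_k \in S$ such that $d(q_k, \bigcup_{x,y \in \Gamma_k} \textrm{cut}_{x,y}) \geq \delta_0$ for every $k$. By compactness of $S$, extract a subsequence along which $q_k \to q \in S$.

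The heart of the argument is the following existence claim: for every $q \in S$ there exist $x_0, y_0 \in B(p, r_0/2) \cap X^{reg}$, with $d(x_0, y_0)$ below the injectivity radius bound from \eqref{inj-orbifold}, such that $\textrm{cut}_{x_0, y_0} = q$. To prove it, take the minimizing geodesic $\sigma: [0, L] \to X$ from $p$ to $q$ with $L = d(p,q) > 0$. The Petrunin-type barrier property \cite[Theorem 10.8.4]{BBI} invoked in Lemma \ref{compactness1} gives $\sigma((0, L)) \subset X^{reg}$. For small $\varepsilon > 0$ set $x_0 := \sigma(\varepsilon)$ and $y_0 := \sigma(2\varepsilon)$; both lie in $B(p, r_0/2) \cap X^{reg}$, and the geodesic from $x_0$ through $y_0$ coincides with $\sigma|_{[\varepsilon, L]}$. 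Being a subarc of a minimizer, this arc is minimizing, so $i(x_0, \sigma'(\varepsilon)) \geq L - \varepsilon = d(x_0, q)$. Conversely, any minimizing extension past $q$ would yield a minimizing geodesic emanating from the regular point $x_0$ and passing through the singular point $q$ in its interior, contradicting the same barrier property. Hence $i(x_0, \sigma'(\varepsilon)) = d(x_0, q)$ and therefore $\textrm{cut}_{x_0, y_0} = q$.

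With the claim in hand, pick $x_k, y_k \in \Gamma_k$ with $d(x_k, x_0), d(y_k, y_0) \leq \eta_k$. For $k$ large, $x_k, y_k \in X^{reg}$ and $d(x_k, y_k)$ lies below the injectivity radius at $x_k$ by \eqref{inj-orbifold}, so the geodesic $\gamma_k$ from $x_k$ through $y_k$ is well-defined and converges in $C^0$ to $\sigma$. A standard semicontinuity argument for the cut locus yields $\textrm{cut}_{x_k, y_k} \to q$: the lower bound on the limit cut distance follows because limits of minimizers are minimizers, while the upper bound is prevented from exceeding $d(x_0, q)$ by the barrier property at $q$. Combined with $q_k \to q$, this forces $d(q_k, \textrm{cut}_{x_k, y_k}) \to 0 < \delta_0$, a contradiction.

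The principal obstacle is the existence claim, which hinges on the Petrunin-type barrier property in its strong form: minimizing geodesics emanating from $X^{reg}$ cannot traverse $S$ in their interior regardless of whether the far endpoint is regular or singular. This mild extension of the form explicitly used in Lemma \ref{compactness1} is what forces the cut distance along any direction pointing at a singular point $q$ to equal exactly $d(x_0, q)$, and it also supplies the upper semicontinuity of cut points needed in the final convergence step.
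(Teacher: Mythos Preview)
Your proof is correct and follows essentially the same approach as the paper's: a contradiction/compactness argument resting on two facts, namely that a minimizing geodesic from a regular point to a singular point $q$ has $q$ as its cut point (the barrier property), and that cut points vary continuously. The only organizational difference is that the paper picks $x_j\in\Gamma_j\cap B(p,r_0/4)$ first, passes the geodesics $[x_j z_j]$ to a limit $[xz]$, and then chooses $y$ on that limit geodesic, whereas you first extract the limit $q\in S$, build $x_0,y_0$ on the geodesic $[pq]$, and then approximate by net points; the substance is the same, and your version is somewhat more explicit about why $\textrm{cut}_{x_0,y_0}=q$.
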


\begin{proof}
The proof is a straightforward argument by contradiction. If not, assume that there exist $\delta>0$, and $\eta_j\to 0$, $\eta_j$-nets $\Gamma_j$ in $B(p,r_0/2)$ such that there exist $z_j\in S$ such that for any $q_j\in \cup_{x,y\in \Gamma_j} \textrm{cut}_{x,y}$, one has $d(z_j, q_j)>\delta$. 
Pick any point $x_j\in \Gamma_j \cap B(p,r_0/4)$, and consider the minimizing geodesics $[x_j z_j]$ from $x_j$ to $z_j$.
Then one can find converging subsequences $x_j\to x$, $z_j\to z\in S$, and converging minimizing geodesics $[x_j z_j] \to [xz]$. 
Pick any $y\in B(p,r_0/2)-B(p,3r_0/8)$ so that $y\in [xz]$, and thus $\textrm{cut}_{x,y}=z$.
Since $\Gamma_j$ are $\eta_j$-nets with $\eta_j\to 0$, there exist $y_j\in \Gamma_j$ such that $y_j\to y$. Since the limit of cut points is cut point, then for sufficient large $j$, $d(z_j,\textrm{cut}_{x_j,y_j})$ is small than any given $\delta$, which is a contradiction.
\end{proof}

\begin{lemma} \label{compactness4}
Let $\tau>0$ be fixed. Then there exist constants $\eta,\widehat{\varepsilon}$, depending on $X,\tau$, such that the following holds.

Let $\Gamma$ be any $\eta$-net in $B(p,r_0/2)\subset X^{reg}$.
For any $z\in S$, there exist $x,y\in \Gamma$ such that $d(\textrm{cut}_{x,y},z)<\tau$.
Moreover, let $s$ be the smallest number such that $N(x,y;\rho,s,\varepsilon)=\emptyset$ for some $0<\varepsilon \leq \widehat{\varepsilon}$.
Then $N(x',y;\frac{\rho}{2},s+\frac{\tau}{2},\varepsilon)=\emptyset$ for all $\varepsilon \leq \widehat{\varepsilon}$.
\end{lemma}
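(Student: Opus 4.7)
\medskip
\noindent\textbf{Proof plan for Lemma \ref{compactness4}.}
The strategy is to deduce the first assertion directly from Lemma \ref{compactness3}, and to establish the ``moreover'' part by a compactness/limit argument in the spirit of Lemmas \ref{compactness1} and \ref{compactness2}. For the first assertion, I would apply Lemma \ref{compactness3} with $\delta=\tau$ and, rather than using only the abstract existence of a good pair, I would invoke the explicit construction from its proof: fix $x_0 \in \Gamma \cap B(p,r_0/4)$, take the minimizing geodesic $[x_0 z]$, and let $y$ be a net point within $\eta$ of the intersection of $[x_0 z]$ with $B(p,r_0/2)\setminus B(p,3r_0/8)$. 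In the limit $\eta\to 0$, $y$ lies on $[x_0 z]$ and hence $\mathrm{cut}_{x_0,y}=z$; this rigidity of the construction is what drives the second assertion.

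For the ``moreover'' part, suppose by contradiction that no pair $(\eta,\widehat{\varepsilon})$ works. Then there exist sequences $\eta_k,\widehat{\varepsilon}_k \to 0$, $\eta_k$-nets $\Gamma_k$, singular points $z_k \in S$, and corresponding pairs $x_k,y_k \in \Gamma_k$ chosen by the above construction (so $d(\mathrm{cut}_{x_k,y_k},z_k)\to 0$) such that the smallest $s_k$ with $N(x_k,y_k;\rho_k,s_k,\varepsilon) = \emptyset$ for some $\varepsilon \leq \widehat{\varepsilon}_k$ admits some $\varepsilon_k \leq \widehat{\varepsilon}_k$ with a point $q_k$ satisfying $d(y_k,q_k) < s_k + \tau/2 + \varepsilon_k$ and $d(x'_k, q_k) \geq \rho_k/2 + s_k + \tau/2$. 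Extract converging subsequences $x_k\to x$, $y_k\to y$, $z_k\to z\in S$ (using closedness of $S$), $x'_k\to x'$, $q_k\to q^*$, $\rho_k\to\rho$, $s_k\to s$, $\varepsilon_k\to 0$. By continuity of cut distance (as used in Lemma \ref{compactness2}) we have $\mathrm{cut}_{x,y}=z$, and by the same analysis as in \eqref{s-smallest} the minimality of $s_k$ with $\widehat{\varepsilon}_k\to 0$ forces $\rho+s = i(x,\xi) = d(x,z)$, hence $\rho/2 + s + \tau/2 = d(x',z) + \tau/2$.

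The passage to the limit of the nonemptiness condition gives $d(y,q^*)\leq s+\tau/2$ and $d(x',q^*)\geq \rho/2+s+\tau/2$. Combining with the triangle inequality $d(x',q^*)\leq d(x',y) + d(y,q^*) = \rho/2 + d(y,q^*)$ forces equality throughout, so that $y$ lies on the minimizing geodesic $[x',q^*]$ of length $d(x',z)+\tau/2$. Since $x'\in X^{reg}$ and the geodesic has an interior point $y$ contained in the regular part, uniqueness of minimizing extensions at regular points (valid within the injectivity radius bound \eqref{inj-orbifold}, which is uniformly positive at $x'$ by the construction) implies that $[x',q^*]$ coincides with the extension of $[x',y]$ through $z$. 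Thus $z \in S$ is an interior point of a minimizing geodesic from the regular point $x'$, contradicting \cite[Theorem 10.8.4]{BBI} as invoked in the proof of Lemma \ref{compactness2}.

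The main obstacles are of a technical nature: one must verify that $\mathrm{cut}_{x_k,y_k}\to z$ under the construction, that the smallest $s_k$ indeed converges to $i(x,\xi)-\rho$ (this uses the lower volume bound \eqref{volume-beforecut} together with Lemma \ref{compactness1} applied along the sequence, with the allowed parameter $\tau'_k\to 0$ as $\widehat{\varepsilon}_k\to 0$), and that the uniqueness-of-minimizing-extension argument at the regular midpoint is applicable. The last point uses that $d(x',z)$ stays bounded (by $\rho/2$ plus the fixed reach) and that $x'$ remains uniformly away from $S$ in the limit, which can be ensured by strengthening the selection of $x_0$ in the initial construction so that its limit lies in the interior of $B(p,r_0/4)$.
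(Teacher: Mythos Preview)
Your approach is essentially correct but takes a genuinely different route from the paper. The paper does not run a single limiting contradiction argument; instead it first isolates the key geometric fact as a standalone claim: \emph{there exists $\delta=\delta(X,\tau)>0$ such that whenever $d(\mathrm{cut}_{x,y},S)<\delta$, one has $d(\mathrm{cut}_{x',y},\mathrm{cut}_{x,y})<\tau/10$}. This is the quantitative statement that moving the basepoint to the midpoint cannot push the cut point far when the cut point is already near $S$. With this in hand, the paper applies Lemma~\ref{compactness3} with parameter $\delta$ (not $\tau$) to find $x,y\in\Gamma$ with $d(\mathrm{cut}_{x,y},z)<\delta<\tau$, deduces $d(\mathrm{cut}_{x',y},\mathrm{cut}_{x,y})<\tau/10$, and then finishes by a direct inequality: from \eqref{s-smallest} one has $\rho+s>i(x,\xi)-\varepsilon$, hence
\[
\tfrac{\rho}{2}+s+\tfrac{\tau}{2}>d(x',\mathrm{cut}_{x,y})-\varepsilon+\tfrac{\tau}{2}>d(x',\mathrm{cut}_{x',y})+\tfrac{\tau}{10},
\]
and Lemma~\ref{compactness1} (with fixed parameter $\tau/10$) gives $N(x',y;\tfrac{\rho}{2},s+\tfrac{\tau}{2},\varepsilon)=\emptyset$ for all $\varepsilon\le\widehat{\varepsilon}$.

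Your argument reaches the same conclusion by taking everything to a limit at once and invoking the fact that a singular point cannot be an interior point of a minimizing geodesic. This is morally the same geometry, but packaged differently: the paper's intermediate claim about $\mathrm{cut}_{x',y}$ being close to $\mathrm{cut}_{x,y}$ near $S$ encodes exactly the obstruction you exploit at the limit. What the paper's route buys is modularity and brevity---no need to control the convergence $\rho_k+s_k\to i(x,\xi)$ or to justify uniqueness of geodesic continuation at $y$ in the limit. What your route buys is that it avoids stating a separate auxiliary lemma. Two technical points in your outline deserve care: (i) the convergence $\rho_k+s_k\to i(x,\xi)$ requires combining \eqref{volume-beforecut} with Lemma~\ref{compactness1} applied for \emph{each} fixed $\tau'>0$ (not with a sequence $\tau'_k\to 0$), which already yields $\rho_k+s_k\le i(x_k,\xi_k)+\tau'$ for every $\tau'$; and (ii) the passage from equality in the triangle inequality to ``$z$ is an interior point of $[x',q^*]$'' needs the observation that the concatenated minimizing path is a geodesic, hence smooth at the regular point $y$, forcing $[y,q^*]$ to start in the direction of $z$ and therefore to reach $z$ at arclength $s<s+\tau/2$.
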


\begin{proof}
Let $\tau>0$ be fixed. There exists $\delta=\delta(X,\tau)>0$ such that for any $x,y\in X$ satisfying $d(\textrm{cut}_{x,y},S)<\delta$, then $d(\textrm{cut}_{x',y}, \textrm{cut}_{x,y})<\tau/10$.
One can assume that $\delta<\tau$.
Then by Lemma \ref{compactness3}, there exists $\eta=\eta(X,\delta)>0$ such that the $\delta$-neighborhood of $\cup_{x,y\in \Gamma} \textrm{cut}_{x,y}$ contains $S$, where $\Gamma$ is any $\eta$-net in $B(p,r_0/2)$.
This means that for any $z\in S$, there exists $x,y\in \Gamma$ such that $d(\textrm{cut}_{x,y},z)<\delta<\tau$.
Hence we have $d(\textrm{cut}_{x',y}, \textrm{cut}_{x,y})<\tau/10$.
Recall from \eqref{s-smallest} that $\rho+s>i(x,\xi)-\varepsilon$. Hence, for all $\varepsilon \leq \widehat{\varepsilon}<\tau/10$,
$$\frac{\rho}{2}+s+\frac{\tau}{2}>i(x,\xi)-\varepsilon-\frac{\rho}{2}+\frac{\tau}{2} =d(x', \textrm{cut}_{x,y})-\varepsilon+\frac{\tau}{2}>d(x', \textrm{cut}_{x',y})+\frac{\tau}{10},$$
where $\widehat{\varepsilon}$ is the constant determined in Lemma \ref{compactness1} with the fixed parameter $\tau/10$.
Thus, Lemma \ref{compactness1} shows that $N(x',y;\frac{\rho}{2},s+\frac{\tau}{2},\varepsilon)=\emptyset$ for all $\varepsilon \leq \widehat{\varepsilon}$.
\end{proof}

Let $\sigma\in (0,1)$ be fixed. Setting the fixed parameter $\tau$ in Lemma \ref{compactness4} to be the constant determined in Lemma \ref{compactness2}, one determines a constant $\eta$.
One can assume $\tau,10\eta\leq \sigma$ without loss of generality.
Let $\Gamma=\{p_i\}_{i=1}^N$ be any $\eta/2$-net in $B(p,r_0/2)\subset X^{reg}$.
With the data $d_S^{\sigma}$ obtained in Step 1 \eqref{data-1}, we search for all multi-indices $\beta \in \mathcal{R}^{\ast}$ evaluated on all pair of points $x,y\in \Gamma$ in the data $d_S^{\sigma}$ such that 
\begin{equation} \label{criteria-S}
\Big\|\big(r_{\beta}(x),r_{\beta}(y) \big)-d_S^{\sigma}(x,y) \Big\|_{\ell^{\infty}} <3\sigma.
\end{equation}
Denote by $S^{\ast}$ all such multi-indices in $\mathcal{R}^{\ast}$ according to \eqref{criteria-S}.
Here we abuse the notation in Definition \ref{def-Rstar} also denoting the set of multi-indices by $\mathcal{R}^{\ast}$.
We shall show that $S^{\ast}$ has the following property.

\smallskip
$\bullet$ \emph{Claim that Step 1 and \eqref{criteria-S} find $S^{\ast}\subset \mathcal{R}^{\ast}$ and $\widehat{S}\subset X$ such that 
\begin{equation} \label{eq-findingS}
d_H(S, \widehat{S})<C_s\sigma^{1/2},\quad \widehat{S}:= \bigcup_{\beta\in S^{\ast}}X_{\beta}^{\ast},
\end{equation}
for some constant $C_s>1$, where $X_{\beta}^{\ast}$ is defined in \eqref{def-slice}.}

\smallskip
We prove this claim as follows.
For any $z\in S$, there exists $x,y\in\Gamma$ such that they generate admissible data according to criteria \eqref{data-1} due to Lemma \ref{compactness4}. Moreover, we have $d(\textrm{cut}_{x,y},z)<\tau\leq\sigma$.
Recall \eqref{s-smallest} that $|\rho+s-i(x,\xi)|<\tau$ where $(\rho+s,s)=d_S^{\sigma}(x,y)$, which implies 
\begin{equation} \label{data-to-cut}
|\rho+s-d(x,\textrm{cut}_{x,y})|<\tau, \quad |s-d(y,\textrm{cut}_{x,y})|<\tau.
\end{equation}
Hence by the triangle inequality,
$$|\rho+s-d(x,z)|<2\sigma, \quad |s-d(y,z)|<2\sigma.$$
By definition \eqref{def-slice}, one can find a multi-index $\beta\in \mathcal{R}^{\ast}$ such that $z\in X_{\beta}^{\ast}$, i.e., $\|r_z-r_{\beta}\|_{L^{\infty}(\Gamma)}<4\eta$, and thus this multi-index $\beta$ satisfies \eqref{criteria-S}. This proves one direction of the claim \eqref{eq-findingS}.

For the other direction of the claim, take any point $z'\in X_{\beta}^{\ast}$ for some $\beta\in S^{\ast}$, i.e., satisfying $\|r_{z'}-r_{\beta}\|_{L^{\infty}(\Gamma)}<4\eta$.
By definition of $S^{\ast}$, there exist $p_1,p_2\in \Gamma$ in the data $d_S^{\sigma}$ such that \eqref{criteria-S} is satisfied.
Then by triangle inequality,
$$\Big\|\big(r_{z'}(p_1),r_{z'}(p_2) \big)-d_S^{\sigma}(p_1,p_2) \Big\|_{\ell^{\infty}} <3\sigma+4\eta.$$
Since $d(p_1,\textrm{cut}_{p_1,p_2})=d(p_1,p_2)+d(p_2,\textrm{cut}_{p_1,p_2})$, combining with \eqref{data-to-cut}, we have
\begin{equation}
\big|d(z',p_1)-d(z',p_2)-d(p_1,p_2) \big|<9\sigma,
\end{equation}
where we have used $\eta\leq \sigma/10$.
Thus, applying \cite[Lemma 5.1]{FILLN} gives
\begin{eqnarray*}
d(z',\textrm{cut}_{p_1,p_2}) \leq |d(\textrm{cut}_{p_1,p_2},p_2)-d(z',p_2)|+C \sigma^{1/2} < C\sigma^{1/2}+4\sigma+4\eta.
\end{eqnarray*}
On the other hand, by Step 1 criteria \eqref{data-1} and Lemma \ref{compactness2}, there exists $z\in S$ such that $d(z, \textrm{cut}_{p_1,p_2})<\sigma$. Thus, we have
$$d(z',z)\leq d(z',\textrm{cut}_{p_1,p_2})+d(z,\textrm{cut}_{p_1,p_2}) < C\sigma^{1/2}+5\sigma+4\eta,$$
which concludes the proof of the claim \eqref{eq-findingS}.

\medskip
{\bf Step 3.} We search for all $\beta\in \mathcal{R}^{\ast}$ such that
\begin{equation} \label{criteria-3}
\eta\cdot \|\beta-\beta_S\|_{\ell^{\infty}}> 3C_s \sigma^{1/2}, \;\textrm{ for all }\beta_S\in S^{\ast},
\end{equation}
where $S^{\ast}$ is chosen in Step 2 according to \eqref{criteria-S}.
Denote by $\Gamma=\{p_i\}_{i=1}^N$ the $\eta/2$-net in $B(p,r_0/2)$.
For any point $x_0\in X_{\beta}^{\ast}$ corresponding to such a multi-index $\beta$, and for any $z'\in X_{\beta_S}^{\ast}$ for an arbitrary $\beta_S\in S^{\ast}$, we know
\begin{eqnarray*}
\|r_{x_0}-r_{z'}\|_{{\ell}^{\infty}(\Gamma)} &\geq& \|r_{\beta}-r_{\beta_S}\|_{\ell^{\infty}}-\|r_{x_0}-r_{\beta}\|_{\ell^{\infty}}-\|r_{z'}-r_{\beta_S}\|_{\ell^{\infty}} \\
&>& \eta\cdot \|\beta-\beta_S\|_{\ell^{\infty}}-8\eta > 3C_s \sigma^{1/2}-8\eta.
\end{eqnarray*}
This implies that there exists a point in $\Gamma$, say $p_1\in \Gamma$, such that 
$$|r_{x_0}(p_1)-r_{z'}(p_1)|>3C_s \sigma^{1/2}-8\eta,$$
which yields $d(x_0,z')>3C_s \sigma^{1/2}-8\eta$ by the triangle inequality.
Since $z'$ is chosen arbitrarily in $\widehat{S}$, it follows from \eqref{eq-findingS} that 
$$d(x_0,S)\geq d(x_0,\widehat{S})-C_s\sigma^{1/2} > 3C_s \sigma^{1/2}-8\eta-C_s\sigma^{1/2} >\sigma^{1/2},$$
which gives a lower bound away from the singular set $S$. Thus, we can apply Proposition \ref{recon-1} to recover the Riemannian metric locally near $x_0$. Note that the constant in Proposition \ref{recon-1} depends on $\sigma$, and goes to infinity as $\sigma\to 0$.


\medskip
{\bf Step 4.} The last step is to show that the set of points not corresponding to multi-indices picked up in Step 3 is small in $\sigma$.
Namely, for all other multi-indices $\beta$ not picked up in Step 3, there exists $\beta_S\in S^{\ast}$ such that 
\begin{equation}
\eta\cdot \|\beta-\beta_S\|_{\ell^{\infty}}\leq 3C_s\sigma^{1/2}.
\end{equation}
This means that for any $x\in X_{\beta}^{\ast}$ and any $z\in X_{\beta_S}^{\ast}\setminus S \subset \widehat{S}$, we have for all $i=1,\cdots,N$,
$$|d(x,p_i)-d(z,p_i)|<3C_s\sigma^{1/2}+8\eta,$$
where $\{p_i\}_{i=1}^N$ is the $\eta/2$-net in $B(p,r_0/2)$.
Take any point in $\{p_i\}_{i=1}^N$, say $q_1$, and consider the minimizing geodesic $[q_1 z]$. Since $\{p_i\}$ is an $\eta/2$-net, one can find another point in $\{p_i\}$, say $q_2$, such that 
$$|d(z,q_1)-d(z,q_2)-d(q_1,q_2)|<\eta.$$ Then the triangle inequality shows
$$|d(x,q_1)-d(x,q_2)-d(q_1,q_2)|<6C_s\sigma^{1/2}+17\eta.$$
Thus, applying \cite[Lemma 5.1]{FILLN} gives $d(x,z)<C\sigma^{1/4}$.
Then by \eqref{eq-findingS} we have
$$d(x,S)\leq d(x,z)+d(z,S) < C\sigma^{1/4}+C_s\sigma^{1/2}.$$


\medskip
From the constructions in Step 1-4 above, we arrive at the following result.

\begin{theorem} \label{thm-local}
Let $(X,\mu) \in \overline{{\frak M}{\frak M}}(n,\Lambda,\LK,D)$ and $p\in X^{reg}$. Suppose $\dim(X)=n-1$ and ${\rm Vol}_{n-1}(X) \geq v_0$.
Let $\sigma\in (0,1)$, and $U\subset X^{reg}$ be an open subset containing a ball $B(p,r_0)$.
Then for sufficiently small $\eta$, there exists a constant $J_0$ such that the following holds.

\smallskip
(1) The finite interior spectral data $\{\lambda_j,\phi_j|_U\}_{j=0}^J$ for $J > J_0$ determines a $C_0\eta^{1/2}$-net $\{x_i\}_{i=1}^I$ in $X$, where the constant $C_0$ depends on $n,\Lambda,D$.

\smallskip
(2) The finite interior spectral data also determines a subset of the net, say $\{x_i\}_{i=1}^{I_0}$ for some $I_0<I$, with the following property:
there exists a basis $\{v_k\}_{k=1}^{n-1}$ in the tangent space $T_{x_i} X$ such that
one can calculate numbers $\widehat{g}_{kl}$ directly from the finite interior spectral data $\{\lambda_j,\phi_j|_U\}_{j=0}^J$ for $J > J_0$ such that
$$\big|\widehat{g}_{kl} - g_{kl}(x_i) \big| < C'_1(\sigma) \eta^{1/8}, \quad \textrm{for }\, i=1,\cdots,I_0,$$
where $g_{kl}(x_i)$ is the metric components at $x_i$ in the Riemannian normal coordinates associated to the basis $\{v_k\}_{k=1}^{n-1}$.
The index $I_0$ depends on $\sigma,\eta$.
The constant $C'_1(\sigma)$ depends on $n,\Lambda,\LK,D,v_0,r_0,\sigma$, and $C'_1(\sigma)\to \infty$ as $\sigma\to 0$.

\smallskip
(3) The remaining part $\{x_i\}_{i=I_0+1}^{I}$ of the net satisfies $d(x_i,S)<C_3\sigma^{1/4}$ for $i=I_0+1,\cdots,I$, where the constant $C_3$ depends on $n,\Lambda,D$.
\end{theorem}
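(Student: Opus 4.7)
The plan is to assemble Steps 1--4 of this section together with Lemma \ref{distance-recon} and Proposition \ref{recon-1}, choosing all quantitative parameters in a compatible order. The statement itself is largely a synthesis of what was produced in Steps 1--4; what remains is to package the representatives, pass the $L^\infty$-closeness of interior distance functions to metric closeness, and verify the dependencies.

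First, I would invoke Lemma \ref{distance-recon} on the finite interior spectral data $\{\lambda_j,\phi_j|_U\}_{j=0}^J$ to construct the finite set $\mathcal{R}^*$ of piecewise constant distance functions $\{r_\beta\}$, a $4\eta$-approximation of $\mathcal{R}_B(X)$ in Hausdorff distance in $L^\infty(B)$. Since $\mu^a(X_\beta^*) > c_*\eta^{\dim(Y)}/2$ forces $\mu(X_\beta^*) > 0$ for each $\beta \in \mathcal{R}^*$ by Lemma \ref{volume-slice}, each slice is nonempty, and I pick one representative $x_i \in X_\beta^*$ per index, yielding a finite collection $\{x_i\}_{i=1}^I$. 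Given any $y \in X$, there is some $\beta$ with $y \in X_\beta^*$, hence $\|r_y - r_{x_i}\|_{L^\infty(B)} < 8\eta$ for the corresponding representative. Applying \cite[Proposition 5.2]{FILLN}, valid since $X$ has curvature bounded below in the sense of Alexandrov, converts this into the metric bound $d(y,x_i) < C_0\eta^{1/2}$, giving (1).

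Second, I would execute the ``finding singular points'' procedure to compute $d_S^\sigma$ from \eqref{data-1}, using the measure approximation from Lemma \ref{volume-slice} with $J$ large enough to resolve slices of measure of order $c_*\varepsilon^{\dim(Y)}$, where $\varepsilon \le \widehat{\varepsilon}$ is fixed through Lemmas \ref{compactness1}--\ref{compactness4} with a chosen $\tau$. Step 2 then extracts $S^* \subset \mathcal{R}^*$ via \eqref{criteria-S}, and the claim \eqref{eq-findingS} guarantees $d_H(S,\widehat{S}) < C_s\sigma^{1/2}$ with $\widehat{S} = \bigcup_{\beta\in S^*} X_\beta^*$. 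Relabel so that the multi-indices selected in Step 3 via criterion \eqref{criteria-3} correspond to the representatives $\{x_i\}_{i=1}^{I_0}$; the triangle-inequality argument in Step 3 shows $d(x_i,S) > \sigma^{1/2}$, so Proposition \ref{recon-1} applies (with parameter $\sigma^{1/2}$ in place of $\sigma$) to yield the coefficients $\widehat{g}_{kl}$ and the error bound in (2). For the remaining indices $i \ge I_0 + 1$, Step 4 combined with \cite[Lemma 5.1]{FILLN} gives $d(x_i,\widehat{S}) < C\sigma^{1/4}$, and with $d_H(S,\widehat{S}) < C_s\sigma^{1/2} \le C\sigma^{1/4}$ this establishes (3).

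The main difficulty will be threading the parameters $\eta, \varepsilon, \tau, J$ consistently: $\eta$ must be small enough both for the $L^\infty$-to-metric conversion via \cite[Proposition 5.2]{FILLN} to produce $\eta^{1/2}$ net quality and for the approximate midpoint chosen from the finite net in Step 1 to satisfy the hypotheses of Lemma \ref{compactness2} within tolerance $c_*\varepsilon^{\dim(Y)}/8$; the parameters $\varepsilon, \tau$ are then dictated by Lemmas \ref{compactness1}--\ref{compactness4} in terms of $X$ and $\sigma$; and $J$ must be chosen large enough so that Lemma \ref{volume-slice} resolves slices accurately enough to detect emptiness of $N(x,\xi;\rho,s,\varepsilon)$ throughout the procedure, while also satisfying the hypotheses of Proposition \ref{recon-1}. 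Each individual bound has already been established in the preceding lemmas, so the remaining task is essentially the bookkeeping of constants and their dependencies.
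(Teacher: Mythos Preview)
Your proposal is correct and follows essentially the same approach as the paper: both assemble Lemma~\ref{distance-recon} with \cite[Proposition~5.2]{FILLN} for part~(1), invoke Steps~1--4 together with Proposition~\ref{recon-1} for part~(2), and cite Step~4 for part~(3). Your exposition is in fact more detailed on parameter threading than the paper's own proof, which treats the theorem as a short synthesis of the preceding constructions.
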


\begin{proof}
By Proposition \ref{distance-recon}, the finite interior spectral data for sufficiently large $J$ determine a $4\eta$-approximation $\mathcal{R}^{\ast}$ of the interior distance functions. 
We consider the set of points $\{x_i\}_{i=1}^I$ in $X$ corresponding to the elements in $\mathcal{R}^{\ast}$.
Then applying \cite[Proposition 5.2(2)]{FILLN} yields that $\{x_i\}_{i=1}^I$ is a $C_0\eta^{1/2}$-net in $X$. Note that the Proposition 5.2(2) in \cite{FILLN} applies to Alexandrov spaces with curvature bounded from below. 
This proves the first statement.
The second statement follows from our constructions in Step 1-4 above and Proposition \ref{recon-1}, where the subset $\{x_i\}_{i=1}^{I_0}$ of the net corresponds to elements in $\mathcal{R}^{\ast}$ picked up in Step 3 \eqref{criteria-3}.
The third statement is proved in Step 4.
\end{proof}

\begin{theorem} \label{thm-global}
Let $(X,\mu) \in \overline{{\frak M}{\frak M}}(n,\Lambda,\LK,D)$ and $p\in X^{reg}$. Suppose $\dim(X)=n-1$ and ${\rm Vol}_{n-1}(X) \geq v_0$.
Let $\sigma\in (0,1)$, and $U\subset X^{reg}$ be an open subset containing a ball $B(p,r_0)$.
Then for sufficiently small $\eta$, there exists a constant $J_0$ such that the following holds.

The finite interior spectral data $\{\lambda_j,\phi_j|_U\}_{j=0}^J$ for $J > J_0$ determines a subset $\{x_i\}_{i=1}^{I_0}$ of a $C_0\eta^{1/2}$-net $\{x_i\}_{i=1}^{I}$ in $X$ and numbers $\widehat{d}_{i,i'}$ for $i,i'\in \{1,\cdots, I_0\}$, such that
$$\big|\widehat{d}_{i,i'}-d(x_i,x_{i'})\big|< C_1(X,\sigma) \eta^{1/8}, \quad \textrm{for }\, i,i'=1,\cdots,I_0.$$
Moreover, the remaining part $\{x_i\}_{i=I_0+1}^{I}$ of the net is in the $C_3\sigma^{1/4}$-neighborhood of the singular set of $X$.
\end{theorem}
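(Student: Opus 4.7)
The plan is to use Theorem \ref{thm-local} as a local building block and stitch its approximate Riemannian metrics into a weighted graph whose shortest-path distances approximate the intrinsic distances on $X$. First, I would apply Theorem \ref{thm-local} to obtain the $C_0\eta^{1/2}$-net $\{x_i\}_{i=1}^{I}$, the subset $\{x_i\}_{i=1}^{I_0}$ satisfying $d(x_i,S)>\sigma^{1/2}$, and the approximate metric components $\widehat{g}_{kl}$ in Riemannian normal coordinates at each $x_i$ with $i\leq I_0$. The remaining part $\{x_i\}_{i=I_0+1}^{I}$ already lies in the $C_3\sigma^{1/4}$-neighborhood of $S$ by Theorem \ref{thm-local}(3), so only the approximate distances $\widehat{d}_{i,i'}$ for $i,i'\leq I_0$ need to be constructed and estimated.

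Next, for each pair of nearby net points $x_i,x_{i'}$ with $i,i'\leq I_0$ at distance below a fixed fraction of the injectivity radius bound at $x_i$ (which is bounded below by a positive function of $\sigma$ through \eqref{inj-orbifold} and Lemma \ref{curvature-orbifold}), I would define an edge weight $\widehat{w}_{i,i'}:=\sqrt{\widehat{g}_{kl} v^k v^l}$, where $v$ is the normal-coordinate vector at $x_i$ representing $x_{i'}$, read off from the interior-distance data via the procedure of Theorem \ref{thm-local}(2). I would then define $\widehat{d}_{i,i'}$ as the shortest-path distance in the resulting weighted graph on $\{x_i\}_{i=1}^{I_0}$.

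For the error analysis, given $x_i,x_{i'}$ with $i,i'\leq I_0$, I would take the minimizing geodesic $\gamma$ between them (which lies in $X^{reg}$ by convexity of the regular part in the length-space sense) and approximate it by a polygon with vertices in $\{x_i\}_{i=1}^{I_0}$ of consecutive separation $O(\eta^{1/2})$. Each short segment has true length $O(\eta^{1/2})$, and the metric-component error $C_1'(\sigma)\eta^{1/8}$ from Theorem \ref{thm-local}(2) translates into a per-segment length error of $O(C_1'(\sigma)\eta^{5/8})$. Summing over $m=O(D/\eta^{1/2})$ segments gives a total error of $O(C_1'(\sigma) D\,\eta^{1/8})$, yielding the upper bound $\widehat{d}_{i,i'}\leq d(x_i,x_{i'})+O(C_1'(\sigma)D\,\eta^{1/8})$. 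For the matching lower bound, any graph path $x_i=x_{i_0},\ldots,x_{i_m}=x_{i'}$ has weight $\sum_k \widehat{w}_{i_k,i_{k+1}}\geq \sum_k d(x_{i_k},x_{i_{k+1}})-O(m C_1'(\sigma)\eta^{5/8})\geq d(x_i,x_{i'})-O(C_1'(\sigma)D\,\eta^{1/8})$ by the triangle inequality.

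The main obstacle is the possibility that $\gamma$ passes within distance $\sigma^{1/2}$ of $S$: net points near such pieces of $\gamma$ may fail to lie in $\{x_i\}_{i=1}^{I_0}$, so one must detour slightly around the $\sigma^{1/2}$-tube of $S$ when assembling the approximating polygon. The length penalty of such a detour is controlled by a compactness argument on how deeply minimizing geodesics in the fixed orbifold $X$ probe toward $S$, in the same spirit as Lemmas \ref{compactness1}--\ref{compactness3}. This is precisely why the constant $C_1(X,\sigma)$ must be allowed to depend on the space $X$ itself and to blow up as $\sigma\to 0$, matching the dependence already inherited from Theorem \ref{thm-local}(2).
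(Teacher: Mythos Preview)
Your overall architecture is the same as the paper's: use Theorem \ref{thm-local} as the local input, build a weighted graph on the good net points, and take shortest-path distances. The local error analysis (per-segment $O(\eta^{5/8})$, $O(D/\eta^{1/2})$ segments, total $O(\eta^{1/8})$) also matches the reference \cite[Section 8]{FILLN} that the paper invokes.

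The substantive difference is in how you handle a minimizing geodesic $\gamma$ between two good points that approaches $S$. You propose to \emph{detour} the approximating polygon around the $\sigma^{1/2}$-tube of $S$ and to control the length penalty by a compactness argument. This is not how the paper proceeds, and as stated it has a gap: the compactness argument only tells you that $\gamma$ itself stays some $\widetilde{\sigma}(X,\sigma)$ away from $S$, and typically $\widetilde{\sigma}\ll\sigma^{1/2}$. A detour forced to stay outside the $\sigma^{1/2}$-tube can then be much longer than $\gamma$, and nothing in your argument bounds that excess in terms of $\eta$. Moreover, even knowing a net point is $\tilde\sigma$-far from $S$ does not place it in your set $\{x_i\}_{i=1}^{I_0}$, since Step~3 only guarantees selection of \emph{some} points with $d(\cdot,S)>\sigma^{1/2}$, not all points with $d(\cdot,S)>\widetilde{\sigma}$.

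The paper's fix avoids detours entirely. First, by convexity of $X^{reg}$ plus compactness, any minimizing geodesic between points with $d(\cdot,S)>\sigma^{1/2}$ stays $\widetilde{\sigma}$-far from $S$. Second --- and this is the step you are missing --- the paper \emph{re-applies} Theorem \ref{thm-local} with the smaller parameter $(\widetilde{\sigma}/2C_3)^4$ in place of $\sigma$. With this choice, part (3) forces every unselected net point to lie within $\widetilde{\sigma}/2$ of $S$, so every net point along $\gamma$ is now selected and carries a valid local metric reconstruction. The polygon can then follow $\gamma$ directly, with no detour and no uncontrolled length penalty; the price is that $C_1$ now depends on $\widetilde{\sigma}$, hence on $X$ and $\sigma$, exactly as stated. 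Replacing your detour paragraph with this two-step argument (compactness to get $\widetilde{\sigma}$, then parameter refinement) closes the gap.
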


\begin{proof}
Theorem \ref{thm-local} gives an approximation of the Riemannian metric in normal coordinates near $\{x_i\}_{i=1}^{I_0}$, where $\{x_i\}_{i=1}^{I_0}$ is the subset of the net in $X$ satisfying $d(x_i,S)>\sigma^{1/2}$ determined in Step 3. Furthermore, since $x_i$ $(i=1,\cdots,I_0)$ is bounded away from the singular set, given an element $\widehat{r}\in \mathcal{R}^{\ast}$ close to the element corresponding to $x_i$ (w.r.t. the $\ell^{\infty}$-norm), following the method in \cite{FILLN} also gives an approximation of the coordinate of $\widehat{r}$. Hence one obtains an approximation of $d(x_{i_1},x_{i_2})$ when $x_{i_1},x_{i_2}$ are close to each other, see \cite[Corollary 2.5]{FILLN}.

For global approximations, the issue is that a shortest path between two points $x,y\in X^{reg}$ may come close to the singular set $S$.
Suppose that $x,y\in X^{reg}$ satisfy $d(x,S)>\sigma^{1/2}$ and $d(y,S)>\sigma^{1/2}$.
Observe that there exists a constant $\widetilde{\sigma}=\widetilde{\sigma}(X,\sigma)$ such that any shortest path between $x,y$ is bounded away from $S$ by $\widetilde{\sigma}$. This follows from a straightforward compactness argument in the space $X$, using the convexity of the regular part.
Now applying Theorem \ref{thm-local} while replacing the fixed parameter $\sigma$ there by $(\widetilde{\sigma}/2C_3)^4$, one sees from Theorem \ref{thm-local}(3) that near any shortest path between $x,y$, there exist points in the net (depending on $\widetilde{\sigma}$) at which one can construct the Riemannian metric locally as in Theorem \ref{thm-local}(2).
The constant $C_1$ now depends on $\widetilde{\sigma}$ which depends on $X,\sigma$.
Then following the global constructions in \cite[Section 8]{FILLN} gives the estimate for the distances between points in the net bounded away from $S$ by $\sigma^{1/2}$.

The points $x_i$ in the net satisfying $d(x_i,S)>\sigma^{1/2}$ can be determined by the given interior spectral data,
using Step 3 \eqref{criteria-3}. 
However, note that this procedure does not guarantee to find all points $x_i$ satisfying $d(x_i,S)>\sigma^{1/2}$.
The remaining part of the net is in the $C_3\sigma^{1/4}$-neighborhood of the singular set, as argued in Step 4.
\end{proof}

At last, we prove Theorem \ref{thm-GH}.

\begin{proof}[Proof of Theorem \ref{thm-GH}]
This is a direct consequence of Theorem \ref{thm-global}.
The space $X\setminus S_{\sigma,\delta}$ is the $C_0\eta^{1/2}$-neighborhood of $\{x_i\}_{i=1}^{I_0}$, and the finite space is $\widehat{X}=\{x_i\}_{i=1}^{I_0}$.
Using the numbers $\widehat{d}_{i,i'}$ determined in Theorem \ref{thm-global} from the given interior spectral data, one can construct a metric $d_{\widehat{X}}$ on $\widehat{X}$ such that 
$$|d_{\widehat{X}}(x_i,x_{i'})-\widehat{d}_{i,i'}|<2C_1 \eta^{1/8}, \quad \textrm{for }\, i,i'=1,\cdots,I_0.$$
The existence of such a metric $d_{\widehat{X}}$ is guaranteed by Theorem \ref{thm-global}.
As the space $\widehat{X}$ is finite, this is equivalent to finding a solution of a system of linear inequalities with $I_0(I_0-1)/2$ variables.
Then $(\widehat{X},d_{\widehat{X}})$ is a $4C_1\eta^{1/8}$-Gromov-Hausdorff approximation of $X\setminus S_{\sigma,\delta}$, equipped with the restriction of the metric $d$ of $X$ on $X\setminus S_{\sigma,\delta}$.
Moreover, $S_{\sigma,\delta}$ is contained in the $C_0\eta^{1/2}$-neighborhood of the remaining part $\{x_i\}_{i=I_0+1}^{I}$ of the net in $X$, which is at most $C_3\sigma^{1/4}+C_0\eta^{1/2}$ distance away from the singular set.
For the relation between parameters $\eta$ and $\delta$, one needs an explicit computation from the very beginning of the reconstruction.
The three logarithms trace back to the logarithmic and exponential dependence of constant in Theorem \ref{uc-Y}, and to Lemma \ref{distance-recon} where all $N$ points $\{p_k\}_{k=1}^N$ are used in the reconstruction, see Lemma \ref{volume-slice} and \eqref{bound-N}.
One can refer to similar computations in the Appendix of \cite{BILL}.

When we are given the complete interior spectral data $\{\lambda_j,\phi_j|_U\}_{j=0}^\infty$,
where $U\subset X$ is an open and non-empty set, the uniqueness result follows from the fact that the interior spectral data
determines the heat kernel $H(x,y,t)$ for $t>0$, $x,y\in U$, and applying the result of \cite{KLLY} that the  
inverse problem for the heat kernel is uniquely solvable.
\end{proof}


\section{Applications of Gel'fand's problem for collapsing manifolds} \label{sec-applications}

\subsection{Manifold learning}
\label{App: Manifold learning}

Gel'fand's  problem \ref{problem-collapsing} is encountered in manifold learning, also called dimensionality reduction in data science, see e.g. \cite{COLT, FILH0,ISOMAP}.
In manifold learning one is given a point cloud, that is, a finite subset $\{y_j\}_{j=1}^N\subset \R^n$ of 
 $N$ points that are close to a $d$-dimensional submanifold $M_0$ in an $n$-dimensional Euclidean  space, where $n$ is
 much larger than $d$ \cite{FMN}. Then, the goal in learning is to find a manifold $M\subset \R^m$ 
 that is diffeomorphic to $M_0$ so that $d<m<n$. In a more general problem, called the geometric Whitney
 problem \cite{FIKLN}, one is given a possibly discrete metric space $(Y,d_Y)$ (that is, not necessarily a subset of an Euclidian space), and the task is to construct a smooth Riemannian manifold $M$
that is close to $Y$ in the Gromov-Hausdorff sense.
 The manifold learning problem is encountered in situations where a given data set 
 is assumed to have an intrinsic structure close to a smooth manifold.
 An example of the above problems is encountered in
the cryogenic electron-microscopy (Cryo-EM), see \cite{Cryo4,Singer}, that is, a method to find
the structure of large 
molecules or viruses.
This method is applied to a large number of identical  
samples of the  molecules 
frozen in vitreous water.
By applying electron-microscopy to  a thin slice of the ice containing the molecules, 
 one obtains 
a large number of images of the molecule, all taken from unknown directions.
  For example, the images of the COVID-19 viruses (including their  spike proteins)
 were obtained using this technique. The 2017 Nobel Prize in Chemistry was awarded for developing methods for this problem.
 Mathematically, 
each measured image of individual viruses consists of $m\times m$ pixels that correspond to points in $\R^n$ where $n=m^2$.
The number of images of the virus particles, $N$, is quite large and all images are taken of identical objects (e.g. a virus) but
from unknown directions. Mathematically speaking, the unknown directions correspond to
an element in the 3-dimensional rotation group  $SO(3)$. Thus the set of images
can be regarded as a set of $N$ data points $y_1,\dots,y_N$ in a high dimensional Euclidean
space $\R^n$ that lie approximately on an orbit
of the 3-dimensional rotation group  $SO(3)$. The individual images are 
very noisy
and thus the imaging problem corresponds to finding an approximation of a  submanifold $M_0$ in $\R^n$ that
is diffeomorphic to $SO(3)$, when we are given $y_j=x_j+\xi_j$, $j=1,2,\dots,N$,  where $x_j$ are randomly sampled
points on $M_0$ and $\xi_j\sim N(0,\sigma^2I)$ are Gaussian measurement errors.
In addition to Cryo-EM, manifold learning is widely applied in statistics, visualization of data, astronomy  and chemistry, see
\cite{Meila}.

An extensively studied method in manifold learning is the diffusion maps (or the spectral embedding) introduced
 by Coifman and Lafon, see \cite{Coifman2,Coifman1}.
In this method one considers the data points $X=\{x_j\}_{j=1}^N\subset \R^n$, which are sampled from a manifold $M_0$ as the nodes of a graph, and uses Euclidean
distances between these points to
compute a kernel function $k_t:X\times X\to \R$ that approximates the heat kernel of the graph Laplacian
of $X$ at a small time $t$. The values $k_t(x_j,x_{j'})$ can be considered as an
approximation the values of the heat kernel $H(x_j,x_{j'},t)$ of the manifold $M_0$ at the points 
$x_j,x_{j'}\in M_0$ at the time $t$. The values of the kernel $h_t(x_j,x_{j'})=H(x_j,x_{j'},t)$ at the times $t\in \{t_1,t_2,\dots,t_l\}\subset \R_+$
correspond to the pointwise heat data, $PHD$, considered in \cite{KLLY}.
Then, in the diffusion map 
  algorithm one continues by computing the first $J$ eigenfunctions $\phi_j(x)$
of the integral operator having the kernel $h_t(x,x')$, $x,x'\in X$, and uses the eigenfunction evaluation map,
defined as
\begin{eqnarray}\label{eigenmap}
 & &\Phi^{(K,J)}:X\to \R^J,\\ \nonumber
& &\Phi^{(K,J)}(x)=\big(\phi_K(x),\phi_{K+1}(x),\dots,\phi_{K+J-1}(x) \big),
\end{eqnarray}
to construct the set $\Phi^{(K,J)}(X)\subset \R^J$ that  approximates the Riemannian manifold $M_0$. 
Closely related to the present paper and the part I of the paper \cite{KLLY} is the problem of reconstructing an approximation
of the Riemannian manifold $M_0$ in the case when we are given a spatially restricted data. In this case, one has only the local finite interior spectral data,
that is,  $\lambda_j$ and $\phi_j(x_k)$,
where $j\le J$, $k\le K$ and $J,K\in \mathbb Z_+$, or the
local pointwise heat data
$h_{t_\ell}(x_k,x_{k'})=H(x_k,x_{k'},{t_\ell}),$ where  $\ell \le L$ and the sample points $\{x_k: k=1,2,\dots,K\}$ are an $\e$-dense set of points in a possibly small ball $B(x_0,r)$ of $M_0$, not an $\e$-dense set of points in the whole manifold $M_0$. The points in the ball $B(x_0,r)$ can be considered as spatially restricted marker points.

The collapsing of manifolds is encountered in manifold learning when
the data set has several degrees of freedom in different scales \cite{Jones-Marron}. As an motivational example,
one can consider a $d$-dimensional submanifold $M_0\subset \R^n$ that is an embedded image of $N_0\times S^1$
of a map $f:N_0\times S^1\to \R^n$, where $N_0$ is a $(d-1)$-dimensional manifold and 
\beq
f(z,\theta)=f_0(z)+\e \big(\cos\theta\,\nu_1(z)+\sin\theta\,\nu_2(z) \big),\quad z\in N_0,\ \theta \in [0,2\pi],
\eeq
where $f_0:N_0\to \R^n$ is a smooth embedding, and $\nu_1(z)$ and $\nu_2(z)$ are two orthogonal unit
vectors normal to $f_0(N_0)$ at the point $f_0(z)$. The image $f(N_0\times S^1)$ models
a data set that is determined by a $(d-1)$-dimensional variable $z\in N_0$ and an additional 
degree of freedom $\theta\in S^1$ that has a small-scale effect. In the example above on  Cryo-EM,
such  an almost collapsed case is encountered when a large protein (e.g. a virus)
is connected to a small part by a chemical bond that allows a rotation. 
An example of such a bond is the peptide amide linkage between two parts of the molecule, 
see \cite[pp. 29-38]{Bhagavan} for an overview on the degree of freedom in
the 3-dimensional structure of proteins.
In this case, the problem of finding the manifold for the possible EM-images of the molecule means finding a manifold $M$ diffeomorphic to $SO(3)\times S^1$
that is almost collapsed to a lower dimensional manifold.
In data science, the collapsing of manifolds is closely related to multi-scale models
where the intrinsic dimension of the data set is modelled by a function that depends on the
scale (i.e. the size of window) in which the data set is approximated by a smooth structure, see \cite{Wang-Marron}.
Next, we consider a numerical example in the case when we are given the heat kernel data, or distance data, on the whole manifold.


\begin{figure}[h]
\centering 
\hspace{-15mm}
\includegraphics[height=3.5cm]{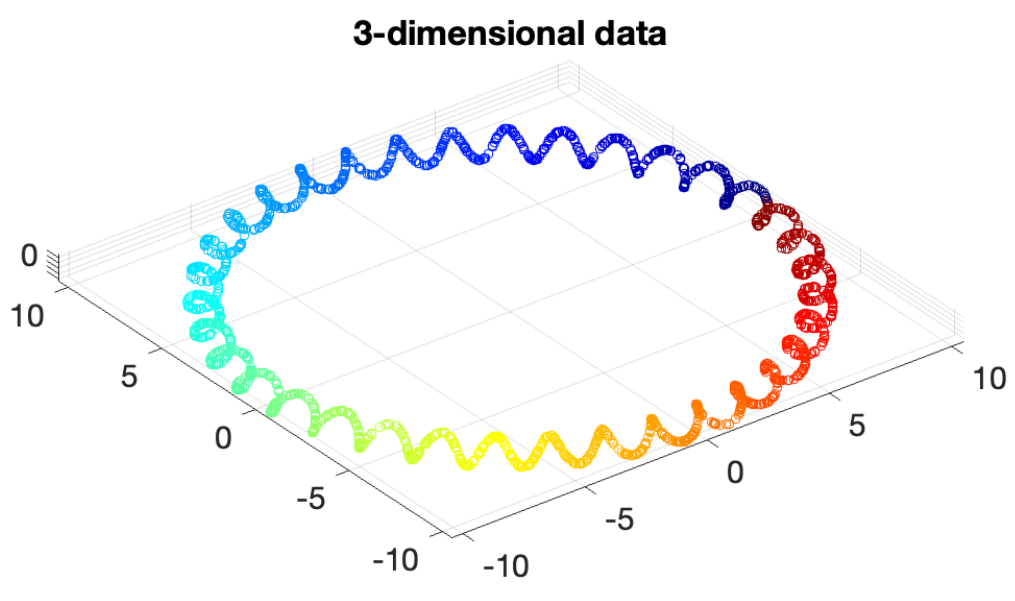}
\includegraphics[height=3.5cm]{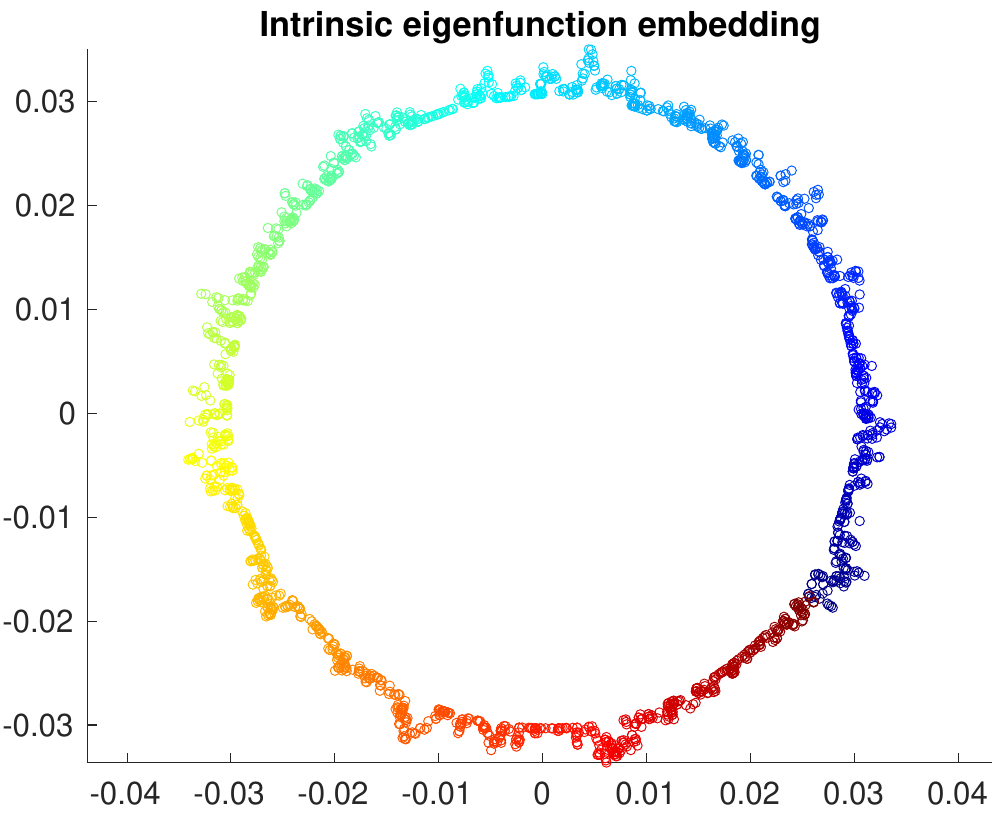}
\includegraphics[height=3.5cm]{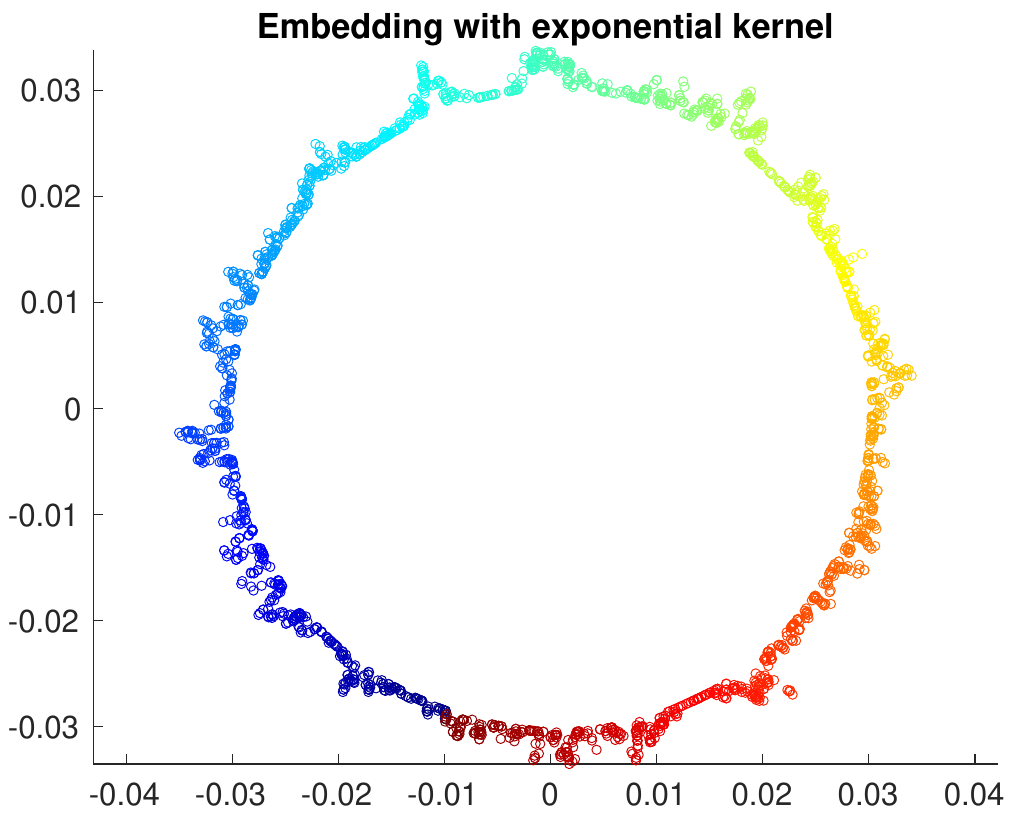}
\hspace{-15mm}
\caption{Applications of Gel'fand's problem in manifold learning. {\bf  Left:}  
Data points $\{x_j\in \R^3:\ j=1,2,\dots, N\}$ with $N=2048$, where $x_j=\gamma^{emb}(\tau_j)$ are randomly sampled points on
a helical curve $\gamma^{emb}\subset M_{R,r}^{emb}$ in a two-dimensional torus $M_{R,r}^{emb}$ having the larger radius $R=10$ and
the smaller radius  $r= \frac 12.$ The induced metric of the embedded manifold $M_{R,r}^{emb}\subset \R^3$
is close to the flat metric of the torus $M_{R,r}=\mathbb S^1_R\times \mathbb S^1_r$.
As $r$ is small, the torus $M_{R,r}$  is `almost collapsed' to the circle $M_R=\mathbb S^1_R$.
{\bf Center:} A low-dimensional representation  $\Phi^{(2,2)}(X)\subset \R^2$ of the manifold $M_{R,r}$, obtained using the
eigenfunctions of the heat kernel $h_t(x_i,x_j)$ that  coincide with the eigenfunctions of the Laplacian. {\bf Right:} A low-dimensional representation  
$\Phi^{(2,2)}(X)\subset \R^2$ of the manifold $M_{R,r}$, obtained using the
eigenfunctions of the integral operator having the exponential kernel $k_t(x_i,x_j)$, see \eqref{eq: Coifman kernel}.
The kernel $k_t(x_i,x_j)$ is an approximation of the heat kernel and can be easily computed using the geodesic distances $d_{M_{R,r}}(x_i,x_j)$ on $M_{R,r}$. In both cases (C1) and (C2),
the image of the 2-dimensional eigenfunction map
$\Phi^{(2,2)}(X)\subset \R^2$ is close to the circle $M_R$.
}

%
%
%
%

\label{fig_embedding}
\end{figure}


\subsubsection{A numerical example on manifold learning with complete
Gel'fand data}

As a simple example, let us consider the case when the 
data are given on the whole manifold, that is, on the ball $B=B_M(x_0,R_0)$
where $R_0>\diam(M)$ so that $B=M$.  In this case, we demonstrate numerically two methods to learn
a manifold, where one method uses the complete Gel'fand data and the other method is based on an approximation of the heat kernel.
The methods developed in this paper for the local Gel'fand data are not yet numerically implemented or analyzed.

We consider  a manifold $M$ that is the torus $M_{R,r}=\mathbb S^1_R\times \mathbb S^1_r$, endowed with the flat product metric, where $\mathbb{S}^1_R$ is the circle of radius $R$. In the numerical test, the larger radius is $R=10$ and the smaller radius is $r=\frac 12$.
Due to visualization purposes, we also consider an embedded
2-dimensional torus
in $\R^3$,  see Figure \ref{fig_embedding} (left), given by
\beq
& &M^{emb}_{R,r}=
\\ \nonumber
&&\Big\{\big((R+r \cos(s_1))\cos(s_2),(R+r\cos(s_1)) \sin(s_2), r\sin(s_1) \big)\in \R^3:\ s_1,s_2\in [0,2\pi] \Big\}.
\eeq
As the ratio $R/r$ is large, the intrinsic metric of the torus $M^{emb}_{R,r}$ inherited from $\R^3$ is relatively close to the flat metric of $M_{R,r}$.

We consider data points $X=\{x_j\in M_{R,r}: j=1,2,\dots,N\}$ with $N= 2048$, where 
$x_j$ are points sampled from the torus $M_{R,r}$. The points $x_j$ could be
sampled randomly on $M_{R,r}$, but to make the visualization of the situation
clearer, we sample $x_j$  randomly on a closed geodesic $\gamma_{x_0,\xi_0}$,
where, in the $(s_1,s_2)$-coordinates, 
$x_0=(0,0)$
and $\xi_0=\frac 1m \p_{s_1}+\p_{s_2}$ with $m=35$. In other words, the sample points are $x_j=\gamma_{x_0,\xi_0}(\tau_j)$, where $\tau_j$ are    independent samples from
the uniform distribution on $[0,2\pi].$ The geodesic $\gamma_{x_0,\xi_0}$ on $M_{R,r}$ is visualized 
in Figure \ref{fig_embedding} (left)
as a helical curve $\gamma^{emb}$ on $M^{emb}_{R,r}$,
$$
\gamma^{emb}(s)=\Big(
(R+r \cos(ms))\cos(s),(R+r\cos(ms))\sin(s), r\sin(ms)\Big)\in \R^3.
$$

 We consider the following two cases:
\begin{enumerate}
\item  [(C1)] (Two-dimensional representation of the manifold using the heat kernel)
We assume that we are given the Gel'fand data, that is, the values of the heat kernel $h_t(x_i,x_j),$ $i,j=1,\dots,N,$
$t=100$.
 We compute a 2-dimensional representation of the manifold using
 the heat kernel of the  manifold. The eigenfunctions of the Laplacian
 operator coincide with the eigenfunctions of the integral operator defined by the heat kernel $h_t(x,y)$,
and we use  these eigenfunctions to represent the manifold $M_{R,r}$  in $\R^2$.

 \item [(C2)] (Two-dimensional representation of the manifold using an exponential kernel)
We assume that we are given the geodesic distances  $d_{M_{R,r}}(x_i,x_j),$ $i,j=1,\dots,N$, on the flat torus $M_{R,r}$.
Using these data we compute the exponenal kernel  $k_t(x_i,x_j)$ defined in \eqref{eq: Coifman kernel},
that is, 
 an approximation of the heat kernel of the  manifold. Using the eigenvectors of the matrix
 associated to  a weighted version of the exponential kernel, we compute
 a 2-dimensional representation of the manifold $M_{R,r}$  in $\R^2$. 
%
\end{enumerate}
Observe that when $r$ is small, the torus $M_{R,r}$
is `almost collapsed' to the limit space that is the circle $M_R=\mathbb S^1_R$. 

\smallskip
In the case (C1) we use a version of the diffusion map algorithm of
 Coifman and Lafon, see \cite{Coifman2,Coifman1}, which we modify below
 so that it uses 
the heat kernel of the manifold.
%
As data, we use the point values of the heat kernel $h_t(x_i,x_j)$ of the Riemannian manifold $(M_{R,r},g)$,
where $g=ds_1^2+ds_2^2$ is the flat product metric of the torus and $t=100$. 
On the flat torus $M_{R,r}$, the values of the heat kernel are computed using the formula 
\beq\label{heat kernel formula}
&&h_t(x_i,x_j)=
\\ \nonumber
& & \sum_{\ell_1,\ell_2\in \Z} \frac 1{4\pi t} \exp
\bigg(-\frac 1{4t} \Big((s_1(x_i)-s_1(x_j)-R\ell_1 )^2+(s_2(x_i)-s_2(x_j)-r\ell_2)^2 \Big)\bigg),
\eeq
where 
$s_1(x)$ and $s_2(x)$ denote the $s_1$ and $s_2$
coordinates of the point $x\in M_{R,r}$. 
(We note that the flat torus was chosen to be our example as its heat kernel can be easily computed 
using formula \eqref{heat kernel formula}.)
First, we define the weighted kernel
\beq
\tilde h_t(x_i,x_j)=\frac 1{h(x_i)}h_t(x_i,x_j),\quad 
h(x_i)=\sum_{j=1}^N h_t(x_i,x_j).
\eeq
Second, 
 we compute the singular value decomposition 
of the matrix $(\tilde h_t(x_i,x_j))_{i,j=1}^N$,
given by $\tilde h_t(x_i,x_j)=\sum_{p=1}^N \omega_p\phi_p(x_i)\psi_p(x_j)$,
where ${\bf v}_p=(\phi_p(x_i))_{i=1}^N$ and ${\bf w}_p=(\psi_p(x_j))_{j=1}^N$ are orthogonal
vectors in $\R^N$, and $\omega_{p+1}\ge \omega_{p}\ge 0$.
We define the map $\Phi^{(K,J)}$, see
\eqref{eigenmap}, using  the right eigenfunctions $\phi_p(x)$,
where $p=K,K+1,\dots, K+J-1$,  in the singular value decomposition. 
Finally, a low-dimensional approximation
of the manifold $M_{R,r}$ is computed by 
the image of the 2-dimensional eigenfunction map
$\Phi^{(2,2)}(X)\subset \R^2$. The result is shown in Figure \ref{fig_embedding} (center).

In the case (C2) we implement a simplified version of the algorithm used above in the case (C1), where
the heat kernel $h_t:X\times X\to \R$ is replaced by an exponential kernel function $k_t:X\times X\to \R$.
We assume that we are given the geodesic distances $d_{M_{R,r}}(x_i,x_j)$, $i,j=1,\dots,N$, and 
we compute the exponential kernel using the formula
 \beq\label{eq: Coifman kernel} 
 k_t(x_i,x_j)=\frac 1{4\pi t}\exp(-\frac {d_{M_{R,r}}(x_i,x_j)^2}{4t})
 \eeq
  with $t=100$. 
  We note that this algorithm is a modified version of the classical diffusion map algorithm where we 
  use the geodesic distances on $M_{R,r}$ instead of  the Euclidean distances on $M^{emb}_{R,r}\subset \R^3$.
  The kernel  $k_t(x_i,x_j)$ can be considered as an  approximation of the heat kernel of the manifold, as due to Varadhan's formula, (\ref{eq: Coifman kernel}) is the leading order asymptotics of the heat kernel as $t\to 0$.
Then, the diffusion map algorithm  is performed by computing the right eigenfunctions $\phi_j(x)$ in the singular value decomposition 
of the matrix $(\tilde k_t(x_i,x_j))_{i,j=1}^N$, where
\bequ
\tilde k_t(x_i,x_j)=\frac 1{k(x_i)}k_t(x_i,x_j),\quad 
k(x_i)=\sum_{j=1}^N k_t(x_i,x_j).
\eequ
 Finally, a low-dimensional approximation
of the manifold $M_{R,r}$ is given by 
the image of the 2-dimensional eigenfunction map
$\Phi^{(2,2)}(X)\subset \R^2$, see Figure \ref{fig_embedding} (right).
In both cases (C1) and (C2), the image of the 2-dimensional eigenfunction map
$\Phi^{(2,2)}(X)\subset \R^2$ in Figure \ref{fig_embedding} is topologically close to a circle, that is, $M_{rep}:=\Phi^{(2,2)}(X)$ is an approximation
of the limit space $M_R$. 

We point out that in the earlier sections of the paper we have studied the local version of
Gel'fand's  problem \ref{problem-collapsing}, where 
the heat kernel $H(x_j,x_{j'},t_\ell)$ are  given at points $x_j$ that do not fill
the whole manifold $M$ but only fill a possibly small metric ball $B=B(x_0,R_0)\subset M$ with $R_0<\diam(M)$, that is, $\{x_j:\ j=1,\dots,N\}\subset B$.
The data missing from the points $x\in M\setminus B(x_0,R_0)$ are compensated by measuring
the heat kernel at several times $t_\ell>0$, or alternatively, measuring
a large number eigenfunctions $\phi_j$ on $B(x_0,R_0)$ and the corresponding eigenvalues,
see Theorem \ref{thm-GH}.

\subsection{Collapsing manifolds in physics}
\label{review-physics}

In modern quantum field theory,
in particular string theory, one often models  the Universe
as a high-dimensional, almost collapsed manifold.
This type of considerations started from the Kaluza-Klein theory in 1921
in which the 5-dimensional Einstein equations are considered on $\R^4\times S^1(\e)$, that is, the
Cartesian product of the standard 4-dimensional space-time with the Minkowski metric and 
a circle $S^1(\e)$ of radius $\e$. As $\e\to 0$, the 5-dimensional Einstein equation yields
a model containing both the Einstein equation and Maxwell's equations.
In this subsection we shortly review this and discuss its relation to the considerations
presented in the main text of this paper.


\begin{figure}[h]
\centering
\includegraphics[height=4.5cm]{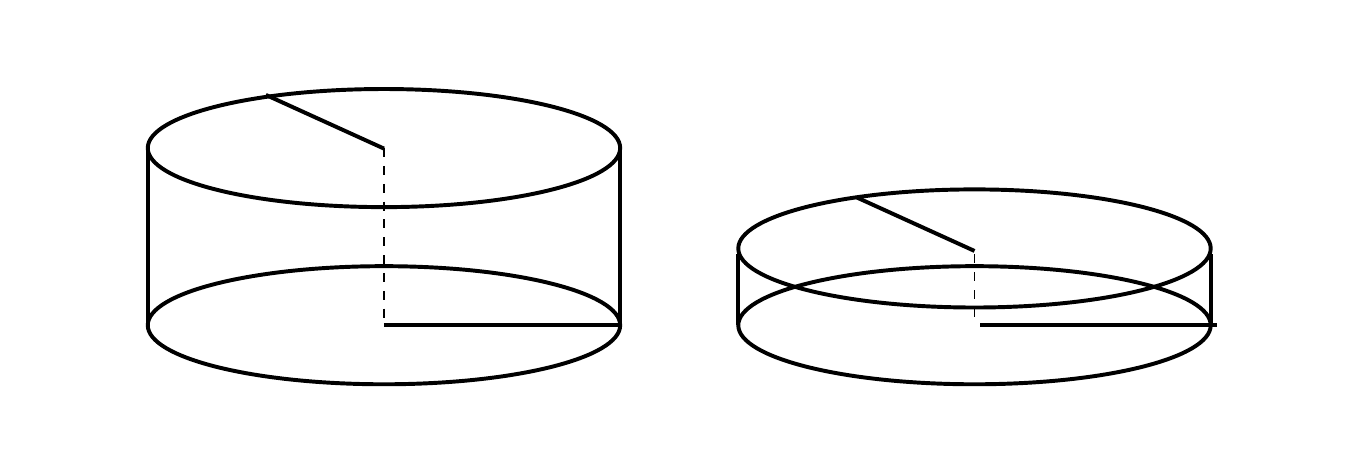}
\includegraphics[height=4.5cm]{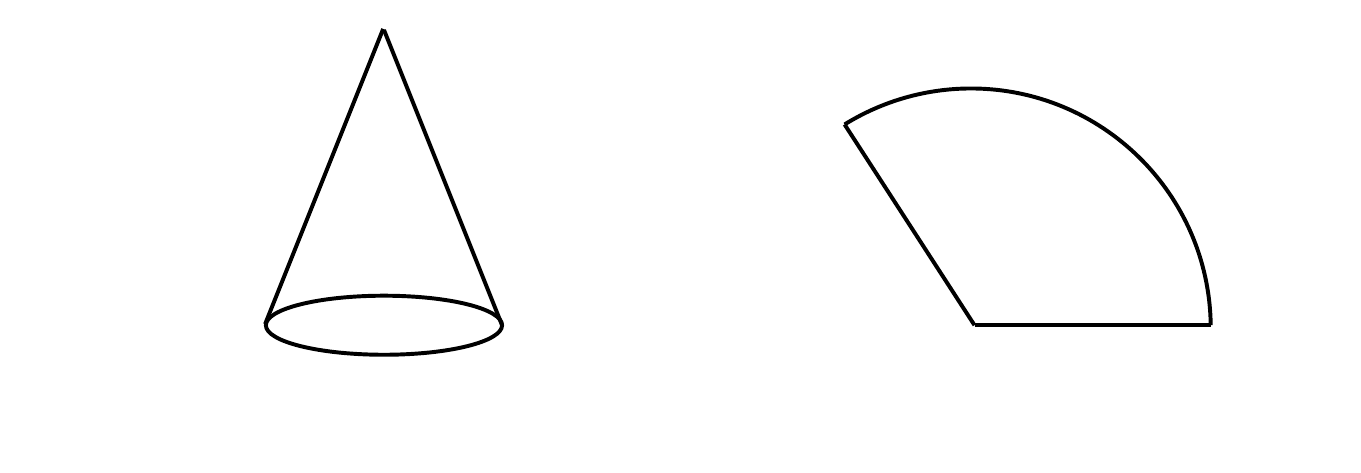}
\vspace{-8mm}
\caption{An example of a product of the 2-dimensional unit disc $D^2\subset \R^2$ with an interval $[0,\epsilon]$, that is, 
a cylinder $D^2\times [0,\epsilon]$. When the top and the bottom of the cylinder are glued together with a  twist of $2\pi/3$ radians,
we obtain a twisted solid torus $M_\epsilon$ (see the top left figure).
As $\epsilon\to 0$, see  the top right figure, this twisted solid torus $M_\epsilon$ collapses
to a 2-dimensional cone (see the bottom left  figure). This cone is a $2\pi/3$ radians sector of a disc, whose radial boundaries are glued together (see the bottom right figure).}
\label{fig_cone}
\end{figure}

First, let us begin with a basic example of collapsing, as illustrated in Figure \ref{fig_cone}.
We start with a product manifold $D^2 \times [0, \epsilon]$, where $D^2$ is the unit disk in $\R^2$ with the Euclidean metric.  Consider the action of the finite group $\Z_m, \,
 m \geq 2$, on $D^2$ by a rotation of angle $2 \pi/m$ around the origin. Then we define $M_{\epsilon}$ by identifying points 
 $({\bf x}, 0) \in D^2 \times \{0\}$ with $(e^{2\pi i/m} \cdot {\bf x}, \epsilon) \in D^2 \times \{\epsilon\}$ in the manifold $D^2 \times [0, \epsilon]$,
 where $e^{2\pi i/m} \cdot {\bf x}$ stands for the rotation by the angle $2 \pi/m$.
 Observe that this gives rise to closed vertical geodesics of length $m \epsilon$ except for the points corresponding to the origin.
 When $\epsilon \to 0$, $M_\epsilon$ collapses to a $2$-dimensional orbifold $X=D^2/\Z_m$, which has conic singular point only at the origin.

\smallskip
In the Kaluza-Klein theory, one starts with a 5-dimensional
manifold $N=\R^4\times S^1$ with the metric $\hat g=\hat g_{jk}$,
$j,k=0,1,2,3,4$, which is a Lorentzian metric of the type $(-,+,+,+,+)$. The "hat"
marks the fact that $\hat g$ is defined on a 5-dimensional manifold.
 We use on $N$ the coordinates $x=(y^0,y^1,y^2,y^3,\theta)$,
where $\theta$ is consider as a variable having values on $[0,2\pi]$.
Let us start with a background metric (or the non-perturbed metric)
$$
\hat {\overline g}_{jk}(x)dx^jdx^k=
\eta_{\nu\mu}(y,\theta)dy^\nu dy^\mu +\e^2d\theta^2,
$$
where $\e>0$ is a small constant and $\nu,\mu,j,k$ are summation indices taking
values $\nu,\mu,j,k \in \{ 0,1,2,3\}$ . Here, we consider
the case when $[\eta_{\nu\mu}(y,\theta)]_{\nu,\mu=0}^3=\diag(-1,1,1,1)$.
Next we consider perturbations of the background metric $\hat {\overline g}$ in the following form
$$
\hat g_{jk}(x)dx^jdx^k=e^{-\sigma/3}
\Big(e^\sigma (d\theta +\kappa A_\mu dy^\mu)^2+g_{\nu\mu}dy^\nu dy^\mu \Big),
$$
where $\kappa>0$ is a constant, $\sigma=\sigma(y,\theta)$ is a function close to the constant $c(\e):=3 \log \e$,
the 1-form $A=A_\mu(y,\theta) dy^\mu$ is small and 
 $g_{\nu\mu}(y,\theta)$ is close to $e^{\sigma(y,\theta)/3}\eta_{\nu\mu}(y,\theta)$.  

Next, assume that $\hat g_{jk}$ satisfies the 5-dimensional Einstein equations
$$
\hbox{Ric}_{jk}(\hat g)-\frac 12(\hat g^{pq}\hbox{Ric}_{pq}(\hat g))\, \hat g_{jk}=0\quad\hbox{ in }\R^4\times S^1,
$$
and write $g=g_{\mu\nu}(y,\theta),$ 
$A=A_{\mu\nu}(y,\theta)$ and $\sigma=\sigma_{\mu\nu}(y,\theta)$ in terms of Fourier series,
\ba
g_{\mu\nu}=\sum_{m=-\infty}^\infty g_{\mu\nu}^{m}(y)e^{im\theta},\quad
A_\mu=\sum_{m=-\infty}^\infty A_{\mu}^{m}(y)e^{im\theta},\quad \sigma=\sum_{m=-\infty}^\infty \sigma^{m}(y)e^{im\theta}.
\ea
Here, the functions $A_{\mu}^{m}(y)$ and $ \sigma^{m}(y)$  correspond to some physical fields. When $\e$ 
is small, the manifold $N$
is almost collapsed in the $S^1$ direction and all these functions
with $m\not=0$ corresponds to physical fields (or particles) of a very high energy which do not
appear in physical observations with a realistic energy. Thus
 one considers only the terms $m=0$ and after suitable approximations
  (see \cite{book1} and \cite[App.\ E]{Wald}) 
one observes that  
 the matrix $g_{\mu\nu}^{0}(y)$, considered as a Lorentzian metric
on $M=\R^4$, the 1-form $A^0(y)=A_{\mu}^0(y)dy^\mu$,
and the scalar function $\phi(y)=\frac 1{\sqrt 3}\sigma^0(y)$ 
satisfy
\beq
\label{KK1} & &\hbox{Ric}_{\mu\nu}(g^0)-\frac 12((g^0)^{pq}\hbox{Ric}_{pq}(g^0))\, g^0_{\mu\nu}=T_{\mu\nu}\quad\hbox{ in }\R^4,\\
\label{KK1b}& &T_{\mu\nu}= \kappa^2 e^{\sqrt 3\, \phi}\big((g^0)^{\a\beta}F_{\a\nu}F_{\beta\mu}-
\frac 14 (g^0)^{\a\beta}(g^0)^{\gamma\delta}F_{\a\gamma}F_{\beta\delta}g^0_{\mu\nu}
\big)
\\ & &\hspace{2cm}\nonumber+\nabla_\mu\phi\, \nabla_\nu\phi-
\frac 12((g^0)^{\a\beta }\nabla_\a\phi\, \nabla_\beta\phi)g_{\mu\nu},\\
\label{KK2} & &d(*F)=0\quad\hbox{ in }\R^4,\\
\label{KK3} & &\square_{g^0}\phi=
\frac 14\kappa^2 e^{\sqrt 3\, \phi}(g^0)^{\a\beta}F_{\a\nu}F_{\beta\mu}
\quad\hbox{ in }\R^4,
\eeq
where $T$ is called the stress energy tensor, $*$ is the Hodge operator with respect to  the metric $g^0_{\mu\nu}(y)$,
$\nabla_\mu\phi=\frac {\p\phi}{\p x^\mu}$ denotes the partial derivative,
$\square_{g^0}$ is the Laplacian 
 with respect to the Lorentzian metric $g^0_{\mu\nu}(y)$
 (i.e.\ the wave operator),
and
$F_{\mu\nu}(y) $ is the exterior derivative of the 1-form
$A^0(y)=A_{\mu}^0(y)dy^\mu$,
$
F=dA^0.
$ 
Physically, if we write $F=E(y)\wedge dy^0+B(y)$, then $E$ corresponds
to the electric field and $B$ the magnetic flux  (see \cite{book2}). 
%
%

In the above, equation  (\ref{KK2}) for  $F=dA^0$ are 
the 4-dimensional formulation of
Maxwell's equations, and (\ref{KK3}) is 
a scalar wave equation corresponding  a mass-less scalar field that interacts
with the $A^0$ field.
Equations (\ref{KK1})-(\ref{KK1b}) are the 4-dimensional Einstein equations in a curved space-time
with stress-energy tensor $T$, which corresponds to the stress-energy of the electromagnetic
field $F$ and scalar field $\phi$. Thus Kaluza-Klein theory unified  the 4-dimensional
Einstein equation and Maxwell's equations. However, as the
scalar wave equation did not correspond to particles observed in physical
experiments, the theory was forgotten for a long time due to
the dawn of quantum mechanics. Later in 1960-1980, it was re-invented in the creation of string theories
when the manifold $S^1$ was replaced by a higher dimensional manifolds, see \cite{book3}.
However,  the Kaluza-Klein theory is still considered as an interesting
simple model close to string theory suitable for testing
ideas.

To consider the relation of the Kaluza-Klein model to the main text in the paper, let
us consider 5-dimensional Einstein equations 
 with some matter model on  manifolds $N_\e=\R\times M_\e$, $\e>0$,
 where $(\{t\}\times M_\e, \,\overline g_\e(\cdot,t))$, $t\in \R$, are compact 4-dimensional Riemannian manifolds.
 Let $\hat {\overline  g}_{\e}=-dt^2+\overline  g_{\e}(\cdot,t)$ be the background
metric on $N_\e$ and  assume that the metric  $\overline g_\e(\cdot,t)$ is independent of the variable
$t$. Moreover, assume that we can make small perturbations to the matter fields
in the domain $\R_+\times M_\e$
that cause the metric to become a small perturbation
 $\hat g_{\e}(\cdot,t;h)=-dt^2+g_{\e}(\cdot,t;h)$ of  the metric  $g_\e(\cdot,t)$,
 where $h>0$ is a small parameter related to the amplitude of the perturbation.
By representing tensors  $g_{\e}(t,x;h)$ for all $h$ at appropriate coordinates (the so-called wave
gauge coordinates), one obtains that
the tensor $\tilde g(t,x)=\p_h \hat  g_{\e}(t,x;h)|_{h=0}$
satisfies the linearized Einstein equations, that is, a wave equation
\beq\nonumber
& &
\hat {\overline \square}\tilde g_{jk}(t,x)+b_{jk}^{lpq}(t,x)\hat {\overline \nabla}_l\tilde g_{pq}(t,x)+
c_{jk}^{pq}(t,x)\tilde g_{pq}(t,x)=\tilde T_{jk}(t,x)\quad\hbox{on }M_\e \times \R,
\\
& &\tilde g_{pq}(t,x)=0\quad\hbox{for $t<t_-$ for some $t_-\in\R$},\label{system 2}
\eeq
see \cite[Ch. 6]{ChBook}, where $\hat {\overline \square}=\square^{\hat{\overline g}}$
is the wave operator, $\hat {\overline \nabla}=\nabla^{\hat{ \overline g}}$ is the 
covariant derivative
with respect to the metric $\hat {\overline g}$, and $\tilde T$ is a source
term corresponding to the perturbation of the stress-energy tensor.
We mention that in realistic physical models,  $\tilde T$ should satisfy
a conservation law but we do not discuss this issue here.

Let us now consider a scalar equation analogous to (\ref{system 2})  for 
a  real-valued function  $U_\e(t,x)$ on $\R\times M_\e,$
\ba
& &
\hat {\overline \square}U_\e(t,x)+B_\e^{\nu}(x){\overline \nabla}_{\nu}U_\e(t,x)+
C_\e(x)U_\e (t,x)=F_\e(t,x)\quad\hbox{on }\R\times M_\e,
\\
& &U_\e(t,x)=0\quad\hbox{for $t<0$}.
\ea
We will apply to the solution $U_\e(t,x)$  the wave-to-heat transformation 
$$
({\cal T}f)(t)=\frac 1 {2\pi}\int_{\R\times \R} e^{-\xi^2t+it'\xi}f(t')\,dt'd\xi
$$
 in the time variable, and denote $u_\e(t,x):=({\cal T}U_\e)(t,x)$ and
  $f_\e(t,x):=({\cal T}F_\e)(t,x)$. 
Then $u_\e$ satisfies the heat equation 
\ba
& &\Big(\frac \p {\p t}+\Delta_{{\overline g}_\e}+B_\e^{\nu}(x)\overline \nabla_{\nu}+C_\e(x) \Big)u(t,x)=f_\e(t,x)\quad\hbox{on }M_\e \times \R_+,\\
& & \;\, u_\e|_{t=0}=0,
\ea
where $\Delta_{{\overline g}_\e}$ is the 3-dimensional (nonnegative definite) Laplace-Beltrami operator
on $(M_\e,{{\overline g}_\e})$. 
Then, if we 
can control the source term $F_\e$ and measure the field $U_\e$
for the wave equation (with a measurement error), we can also produce many sources
$f_\e$ for the heat equation and compute the corresponding fields
$u_\e$. In this paper we have assumed that we are given the values
of the heat kernel, corresponding to measurements with point sources, at the $\delta$-dense points in the
subset $(\delta,\delta^{-1})\times M_\e$ of the space-time with some error.
%
%
Due to the above relation of the heat equation to the wave equation
and the hyperbolic nature of the linearized Einstein equation, the inverse problem for
the pointwise heat data can be considered as a (very much) simplified
version of the question: if the observations of the small perturbations
of physical fields in the subset $\R\times \Omega$ of an almost stationary, almost collapsed
universe $\R\times M_\e$ can be used to find the metric of 
$\R\times M_\e$ in a stable way.
 As $M_\e$ can be considered as a $S^1$-fiber bundle $\pi:M_\e\to M_0$ on a $3$-manifold $M_0$, it is interesting
to ask if the  measurements at the $\delta$-dense subset, where $\delta$ is much larger than $\e$, 
can be used to determine e.g.\ the relative
volume of the
almost collapsed fibers $\pi^{-1}(y)$, $y\in M_0$. Physically,
this means the question if 
 the macroscopic measurements be used to find information on
the possible changes of the parameters of the almost collapsed structures of the universe
in the space-time. 
We emphasize that the questions discussed here in this subsection are
not related to the practical testing of string theory, but more to the philosophical
question: can the properties of the almost collapsed structures in principle be observed
using macroscopic observations, or not.

 
 

\appendix
\section{Remarks on the smoothness in Calabi-Hartman \cite{CH} and Montgomery-Zippin
\cite{MZ}} \label{app:MZ}

The goal of this section is to prove a generalization of Montgomery-Zippin's Theorem \cite{MZ}, Proposition \ref{Pr-MZ}, which is used in part I of the paper \cite{KLLY}.
Let $N$ be either the manifold $M$ or $G \times M$, where $G$ is a Lie group
 of isometries acting on $M$.
We consider Zygmund classes $C^s_*(N)$, $s>0$. To define these spaces,
we cover $N$ by a finite number of coordinate charts, $(U_{a}, \Phi_a)$ with, e.g.,
$\Phi_a(U_a)= Q_{2 r},$ where $Q_r$ is a cube with side $r$,
and assume that 
$$
\bigcup_{a=1}^J \left(\Phi_a^{-1}(Q_r) \right) =N.
$$
Then the definition of the norm in $C^s_*(N)$  is analogous to
the definition of these spaces in Euclidean space, cf. \cite[Section 2.7]{Tri}.

\begin{definition}
Let $\hat s \in \Z_+,\, \tilde s \in (0, 1]$ and $s=\hat s +\tilde s$.
 We say that $f: N \to \R$ is in $C^s_*(N)$, if for
some $\delta <r$,
\bequ \label{Zygmund1}
\|f\|_{C^s_*(N)} = \|f\|_{C(N)}+ \sum_{a=1}^J \left[
\sup_{x \in Q_r} \sup_{|h| < \delta} \sum_{|\beta| \leq \hat s} \frac{1}{|h|^{\tilde s}}
\, \big|(\Delta ^2_h \p^{\beta}f_a)(x) \big| \right] < \infty,
\eequ
where $f_a:=(\varphi_a\,\cdotp f)\circ \Phi_a^{-1}$ and $\varphi_a\in C^\infty_0(U_a)$ are functions
for which $\sum_{a=1}^J \varphi_a(x)=1$, and
$$
(\Delta ^2_h f_a)(x)= f_a(x+h)+f_a(x-h)-2f_a(x).
$$
If condition (\ref{Zygmund1}) is satisfied, it defines the norm of $f$ in 
$C^s_*(N)$. We will also consider maps $F:N\to M$ and denote $F\in C^s_*(N;M)$ if the coordinate representation of $F$ in the local
coordinates are $C^s_*$-smooth.
\end{definition}

Note that even though the norm (\ref{Zygmund1}) depends on the local coordinates used,
the smooth partition of unity $\varphi_a$, and on $\delta$, the resulting norms are
equivalent. Also, when $s \notin \Z_+$, $C^s_*(N)$ coincides with the H\"older spaces $C^{\hat s, \tilde s}(N).$

By \cite[Theorem 2.7.2(2)]{Tri}, the norm involving only terms with the finite differences along the coordinate axis $x^j$
of the partial derivatives  along the same coordinate axis, namely
\bequ \label{Zygmund2}
\|f\|_{C^s_*(N)}^{(1)}=\|f\|_{C(N)}+ \sum_{a=1}^J  \left[
\sup_{x \in Q_r} \sup_{0<\rho < \delta} \sum_{j=1}^n \frac{1}{\rho^{\tilde s}}
\, \left|\left(\Delta ^2_{\rho, j} \frac{\p ^{\hat s}f_a}{(\p x^j)^{\hat s}}\right)(x) \right| \right],
\eequ
is equivalent to (\ref{Zygmund1}), where
$$
(\Delta ^2_{\rho, j} h)(x)= h(x+\rho e_j)+h(x-\rho e_j)-2h(x).
$$

Let  $(M^i,h^i)$, $i=1,2$, be Riemannian manifolds and let $B^i\subset M^i$
be metric balls. We will use estimates presented in  \cite{CH} for a
map $F:M^1\to M^2$ whose restriction to the ball $B^1$  defines 
an isometry $F: (B^1, h^1) \to (B^2, h^2)$. We note that the constants in
these estimates  depend only
on the norms of $h^i, \, i=1,2,$ in the appropriate function classes $C^s_*$ in
some larger balls containing $B^i$ and the radii of these balls.
Thus, if a Lie group $G$ has isometric actions on manifold $M$ 
and $F=F_{g}:M\to M$ is the action of the group element $ g \in G$, then in the case
where $M$ and $G$ are compact, we obtain uniform estimates by covering the manifold and the Lie group
with finite number of balls. In the case where $M$ is compact but $G$  is not, we observe
that the estimates on a finite collection of balls covering $ M$ and one ball ${\cal B}\subset G$ for which $\cup_{g\in G}(g{\cal B})=G$ yield uniform estimates 
 on the space $G\times M$.

Let $(M, h)$ be a compact Riemannian manifold with a metric $h \in C^s_*(M),$ $ s>1,$ i.e., in suitable local coordinates the elements $h_{jk}(x)$ of the metric tensor $h$ are in $C^s_*(M).$
Let $G$ be a Lie  group of transformations acting on $M$ as isometries, i.e., for
any $g \in G$  the action of $g$, denoted by $F_g:M \to M$, is an isometry.
Let us denote
$$
F: G \times M \to M, \quad F(g, x)= F_g(x).
$$
We will use local coordinates $(x^j)_{j=1}^n=(x^1,\dots,x^n)$ of $M$ and 
$(g^\a)_{\a=1}^p=(g^1,\dots,g^p)$  of $G$. Below, we will use  Latin indices $i,k,l$ for 
coordinates on $M$ and Greek indices $\a,\beta,\gamma$ on coordinates on $G$.
Thus e.g.\ for a function $f:G\times M\to \R$ we often denote
  $\p_\a f(g,x)=\frac{\p f}{\p g^\a}(g,x)$ and $\p_j f(g,x)=\frac{\p f}{\p x^j}(g,x)$.

Next, let us assume that
 $h\in C^s_*(M),$ $ s>1$. Our aim is to prove that
 $F:G\times M\to M$ is in $C^{s+1}_*(G\times M;M)$, that is, in local coordinates
 the components $F^m(g,x)$ of $F(g,x)$ are in  $C^{s+1}_*(G\times M)$.
 Let us start with the case when $1 <s \leq 2$.
 Let $g\in G$ be fixed for the moment and denote $F=F_g$. Then $F \in C^2(M;M)$ due to \cite{CH}.
Using local coordinates of $M$ in sufficiently small balls $B_1$ and $B_2$ satisfying  $F(B_1)\subset B_2$, we have by 
 \cite[formula (5.2)]{CH},
\bequ\label{calabi1}
\Gamma^{(1), p}_{i j}(x) \frac{\p F^m}{\p x^p} -\Gamma^{(2), m}_{p q}(F(x))\,
\frac{\p F^p}{\p x^i}\,\frac{\p F^q}{\p x^j}= \frac{\p^2 F^m}{\p x^i \p x^j},
\eequ
where $\Gamma^{(1), p}_{i j}(x)$ and  $\Gamma^{(2), p}_{i j}(x)$ are the Christoffel symbols of the metric $h$ in 
$B_1$ and $B_2$, correspondingly.
As $h \in C^s_*$ and $F \in C^2$, we see easily that all terms in the left side of formula (\ref{calabi1}), except maybe  the term $\Gamma^{(2), m}_{p q}(F(x))$, are in
$C^{s-1}_*$. Next we consider
this term and will show that since 
$\Gamma^{(2), m}_{p q}\in C^{s-1}_*$ and $F \in C^2$, their
composition satisfies 
\bequ \label{2.30.09}
\Gamma^{(2), m}_{p q}\circ F \in C^{s-1}_*(B_1).
\eequ
To show this, observe that since $F \in C^2$, we have
\bequ \label{1.30.09}
\left|F\left(\frac{x+y}{2}\right) -\frac12 F(x)-\frac12 F(y)\right| \leq C|x-y|^2.
\eequ
Let $(s-1)/2< t <s-1$. Then $\Gamma^{(2), m}_{p q}  \in C^t(B_2)$, and for $x,y\in B_1$ we have
$$
\left|\Gamma^{(2), m}_{p q}\left(F(\frac{x+y}{2})\right)- 
\Gamma^{(2), m}_{p q}\left( \frac12 F(x)+\frac12 F(y)\right) \right| \leq c |x-y|^{2t}
\leq c'|x-y|^{s-1}.
$$
Moreover, we see that
\bfo
& &\left|\Gamma^{(2), m}_{p q}(F(x))+\Gamma^{(2), m}_{p q}(F(y))-2\Gamma^{(2), m}_{p q}\left(F(\frac{x+y}{2})\right)\right| \leq
\\ \nonumber
& &\left|\Gamma^{(2), m}_{p q}(F(x))+\Gamma^{(2), m}_{p q}(F(y))-
2\Gamma^{(2), m}_{p q}\left( \frac12 F(x)+\frac12 F(y)\right)
\right|+c'|x-y|^{s-1}
\\ \nonumber
& & \leq c|F(x)-F(y)|^{s-1}+c'|x-y|^{s-1} \leq C|x-y|^{s-1}.
\efo
which proves the claim (\ref{2.30.09}).

By \cite{Hor3}, the space $C^{s-1}_*(M)$  with $s>1$ is an algebra, that is, the pointwise
multiplication satisfies $C^{s-1}_*(M) \cdot C^{s-1}_*(M) \subset C^{s-1}_*(M)$. Thus
it follows from (\ref{calabi1}) that
$\frac{\p^2 F^m}{\p x^i \p x^j} \in C^{s-1}_*(B_1)$. This shows that
$F_g \in C^{s+1}_*(M;M)$ for $g\in G$.

Differentiating further (\ref{calabi1})
with respect to variables $x^j$ and
repeating the above considerations, we see that if $h\in C^{s}_*(M)$ with $ s \in (2, 3]$ then $F_g \in C^{s+1}_*(M,M)$. Iterating this construction, we obtain the following result.

\begin{lemma} \label{C-H}
Let $(M, h)$, $h \in C^s_*(M)$, $s>1$, be a compact 
Riemannian manifold.
Let $G$ be a Lie  group of transformations acting on $M$ as isometries, i.e., for
any $g \in G,$  the corresponding action $F_g:M \to M$ is an isometry.
Then, 
for each $g\in G$  the map $F_g$ is in $C^{s+1}_*(M;M)$, and the norm
 of $F_g$  in $C^{s+1}_*(M;M)$ is
uniformly bounded with respect to $g \in G$.
\end{lemma}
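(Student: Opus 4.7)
The plan is to inductively promote the bootstrap already initiated in the text around identity (\ref{calabi1}) to arbitrary $s>1$, and then to verify that every constant appearing in the successive estimates depends only on $(M,h)$, thereby giving uniformity in $g \in G$. First I would fix a finite atlas $\{(U_a,\Phi_a)\}_{a=1}^J$ adapted to the Zygmund norm (\ref{Zygmund1}), together with a slightly shrunken sub-cover $\{U_a'\}$ with $\overline{U_a'}\subset U_a$, and choose the chart radii so small that for every $g\in G$ and every index $a$ there is some $b=b(g,a)$ with $F_g(U_a')\subset U_b$. Since $F_g$ is an isometry of $(M,h)$ it is $1$-Lipschitz with respect to the intrinsic metric, so such a choice exists uniformly in $g$ by compactness of $M$.

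Write $s=\hat s+\tilde s$ with $\hat s\in\Z_{\geq 1}$ and $\tilde s\in(0,1]$. The base case $\hat s=1$ is precisely the argument given before Lemma \ref{C-H}: Calabi-Hartman \cite{CH} provides $F_g\in C^2$ with norm controlled only by $(M,h)$, and identity (\ref{calabi1}) expresses $\p^2 F_g^m$ as a polynomial in $\p F_g$ whose coefficients are $\Gamma^{(1),p}_{ij}(x)\in C^{s-1}_*$ and $\Gamma^{(2),m}_{pq}(F_g(x))$. The composition estimate $\Gamma^{(2),m}_{pq}\circ F_g\in C^{s-1}_*$ established via (\ref{1.30.09}) in the excerpt, together with the algebra property $C^{s-1}_*\cdot C^{s-1}_*\subset C^{s-1}_*$ from \cite{Hor3}, yields $F_g\in C^{s+1}_*(M;M)$. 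For the inductive step, suppose the statement is proved up to order $\hat s$, and let $s=(\hat s+1)+\tilde s$. I would differentiate (\ref{calabi1}) exactly $\hat s-1$ times with respect to the $x$-variables; by the Faà di Bruno formula the result has the schematic form
\begin{equation*}
\p^{\hat s+1}F_g^m=\sum_{\alpha} \bigl(\p^\alpha \Gamma^{(\cdot)}\bigr)(\cdot)\cdot P_\alpha(\p F_g,\ldots,\p^{\hat s}F_g),
\end{equation*}
where $P_\alpha$ are polynomials and the arguments of $\p^\alpha\Gamma^{(2)}$ are $F_g(x)$. By the inductive hypothesis $F_g\in C^{\hat s+1}_*$ with uniform bound in $g$; using this regularity, the higher-order analogue of the composition estimate of the excerpt (where the leading second-order Taylor expansion of $F_g$ is replaced by its $\hat s$-th order Taylor expansion) gives $(\p^\alpha\Gamma^{(2)})\circ F_g\in C^{\tilde s}_*$ for $|\alpha|\leq \hat s-1$. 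The algebra property of $C^{\tilde s}_*$ (valid for $\tilde s>0$) then puts the right-hand side in $C^{\tilde s}_*$, i.e.\ $F_g\in C^{s+1}_*(M;M)$.

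Finally, for uniformity in $g\in G$: the $C^s_*$-norms of the Christoffel symbols $\Gamma^{(1)}$ and $\Gamma^{(2)}$ are those of the same metric $h$ computed in the fixed finite atlas, hence depend only on $(M,h)$; the atlas itself is fixed and finite; the base $C^2$-bound from \cite{CH} is intrinsic to $(M,h)$; and each composition/algebra constant at the $k$-th inductive step depends only on $\|h\|_{C^s_*}$ and on the bound on $\|F_g\|_{C^{k+1}_*}$ furnished uniformly by the previous step. Consequently $\|F_g\|_{C^{s+1}_*(M;M)}$ admits a bound depending only on $(M,h,s)$. The main obstacle I anticipate is the higher-order composition estimate $(\p^\alpha\Gamma^{(2)})\circ F_g\in C^{\tilde s}_*$ for non-integer indices, since Zygmund-scale composition is more delicate than the H\"older case; however, this is handled by replacing (\ref{1.30.09}) with the $\hat s$-th order Taylor remainder bound $\bigl|F_g(\tfrac{x+y}{2})-T^{\hat s}_{(x+y)/2}F_g\bigl(\tfrac{x+y}{2}\bigr)\bigr|\le C|x-y|^{\hat s+1}$ coming from the inductive regularity of $F_g$, after which the same Zygmund-difference computation as in the excerpt closes the argument.
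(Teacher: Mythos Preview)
Your proposal is correct and follows essentially the same route as the paper: both arguments bootstrap via the Calabi--Hartman identity (\ref{calabi1}), the composition estimate (\ref{2.30.09}), and the algebra property of $C^{s-1}_*$, and then iterate by differentiating (\ref{calabi1}) in $x$. One bookkeeping slip to correct: after differentiating $\hat s-1$ times the right-hand side actually lands in $C^{1+\tilde s}_*$ (since $\partial^\alpha\Gamma\in C^{s-1-|\alpha|}_*\subset C^{1+\tilde s}_*$ for $|\alpha|\le\hat s-1$, and the inductive hypothesis applied with $s'=\hat s+1$ gives $\partial^{\hat s}F_g\in C^2_*$), which is what you need for $F_g\in C^{s+1}_*$; your stated conclusion ``right-hand side in $C^{\tilde s}_*$'' would only yield $F_g\in C^s_*$.
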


We now turn to \cite{MZ}. The corresponding result in \cite{MZ} which we need
is as follows (see Theorem on p. 212, sec. 5.2, \cite{MZ}).

\begin{theorem} \label{Th-MZ} (Montgomery-Zippin)
Let $M$ be a differentiable manifold of class $C^k$, $k \in \Z_+$.
Let $G$ be a Lie group of transformations acting on $M$ so that $F_g(\cdot) \in C^k(M;M)$ uniformly in $g\in G$, where $F_g$ is the action of $g\in G$. 
Define $F:G\times M\to M$ by $F(g,x)=F_g(x)$.
Then, 
in   the local real-analytic coordinates 
of $G$ and the $C^{k}$-smooth coordinates of $M$,  we have
$$
F \in C^k(G \times M;M).
$$
\end{theorem}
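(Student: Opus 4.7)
The plan is to reduce joint $C^k$-regularity of $F$ on $G\times M$ to regularity along one-parameter subgroups, by exploiting the Lie group structure of $G$ together with the uniform $C^k$-hypothesis on $F_g$ in $x$, and then to invoke ODE regularity theory.

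First, I would localize using the group law: for any $(g_0,x_0)\in G\times M$, the identity
\[
F(g,x) \;=\; F\!\left(g_0,\; F(g_0^{-1}g,\, x)\right),
\]
together with $F_{g_0}\in C^k(M;M)$, reduces joint $C^k$-regularity at $(g_0,x_0)$ to joint $C^k$-regularity at $(e, F_{g_0^{-1}}(x_0))$. Around the identity $e\in G$ I would introduce canonical coordinates of the second kind: fixing a basis $X_1,\ldots,X_p$ of the Lie algebra $\mathfrak{g}$, the map $(t_1,\ldots,t_p)\mapsto \exp(t_1X_1)\cdots\exp(t_pX_p)$ is an analytic diffeomorphism from a neighborhood of the origin in $\mathbb{R}^p$ onto a neighborhood of $e$. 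Factoring
\[
F(\exp(t_1X_1)\cdots\exp(t_pX_p),\, x)
\;=\; F\!\Bigl(\exp(t_1X_1),\, F\bigl(\exp(t_2X_2),\,\ldots,\, F(\exp(t_pX_p),x)\bigr)\Bigr),
\]
and differentiating by the chain rule using the uniform $C^k$-bound on $F_g$ in $x$, the problem reduces to showing that, for each $X\in\mathfrak{g}$, the map $(s,y)\mapsto F(\exp(sX),y)$ is jointly $C^k$.

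Second, I would extract an ODE for each one-parameter subgroup action. The homomorphism property
\[
F(\exp(sX),\, F(\exp(tX),\, y)) \;=\; F(\exp((s+t)X),\, y)
\]
shows that $\gamma_y(s):=F(\exp(sX),y)$ satisfies $\gamma_y'(s)=\widetilde{X}(\gamma_y(s))$ with $\gamma_y(0)=y$, where
$
\widetilde{X}(y)\;:=\;\partial_s\big|_{s=0}F(\exp(sX),\,y)
$
is the fundamental vector field. Existence of $\widetilde{X}$ as a continuous object follows from joint continuity of $F$ (built into the Lie transformation group hypothesis) and the semigroup structure.

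The main obstacle is showing $\widetilde{X}\in C^{k-1}(M;TM)$: only with this regularity can classical smooth-dependence-on-initial-data for ODEs upgrade the one-parameter flow $(s,y)\mapsto F(\exp(sX),y)$ to joint $C^k$-regularity, which then feeds back into the reduction above. The key identity for this step is obtained by differentiating the group law in $s$ at $s=0$:
\[
\widetilde{X}\bigl(F(\exp(tX),y)\bigr) \;=\; \partial_t F(\exp(tX),y).
\]
Differentiating this in $y$ up to order $k-1$ and using the uniform $C^k$-bounds on $F_{\exp(tX)}$ in $y$, an Arzel\`a--Ascoli extraction as $t\to 0$ produces the required spatial bounds on $\widetilde{X}$ and its derivatives. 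Once this regularity of the fundamental vector fields is in hand, ODE theory delivers joint $C^k$-regularity of each one-parameter flow; composition through the canonical coordinates of the second kind provides joint $C^k$-regularity of $F$ near $(e,x_0)$; and left translation by $g_0$ propagates this to any $(g_0,x_0)\in G\times M$, establishing $F\in C^k(G\times M;M)$.
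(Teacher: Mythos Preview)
The paper does not give its own proof of this theorem; it is quoted from Montgomery--Zippin \cite{MZ}. The technique from \cite{MZ} that the paper does reproduce, in proving the Zygmund generalization (Proposition \ref{Pr-MZ}), is quite different from yours: rather than passing through ODE theory, it uses the integral identity (\ref{MZ1}) to \emph{solve} for $\partial_\alpha F(0,x)$ in terms of $x$-derivatives of $F$ and values of $F$ (both known by hypothesis), and then the algebraic relation (\ref{MZ aux B}) to transfer regularity at the identity to regularity at a general $g$. The point of (\ref{MZ1})--(\ref{MZ2}) is precisely to manufacture the $g$-derivative out of data that does not presuppose it.

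Your flow-based approach is a recognizable alternative strategy, but the sketch has a genuine gap at its central step. The claim that $\widetilde X(y)=\partial_s|_{s=0}F(\exp(sX),y)$ exists as a continuous vector field does not follow from joint continuity of $F$ together with the semigroup law alone; producing the $g$-derivative is exactly the substantive content of the theorem, and needs a concrete mechanism. Your subsequent argument for $\widetilde X\in C^{k-1}$ is circular as written: differentiating the identity $\widetilde X(F(\exp(tX),y))=\partial_t F(\exp(tX),y)$ in $y$ via the chain rule requires $\widetilde X$ to already be differentiable, while the right-hand side presupposes the $t$-derivative whose existence is at issue. An Arzel\`a--Ascoli extraction on the difference quotients $t^{-1}(F(\exp(tX),\cdot)-\mathrm{id})$ would need a uniform $C^{k-1}$ bound on these quotients as $t\to 0$, and the hypotheses give uniform $C^k$ bounds on $F_{\exp(tX)}$ but not a rate $D_yF(\exp(tX),\cdot)-I=O(t)$ in $C^{k-1}$. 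To make the ODE route work you need an independent device that produces $\widetilde X$ and its spatial derivatives from the $x$-regularity alone---which is essentially what the Montgomery--Zippin identity (\ref{MZ1}) supplies.
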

Note that as $G$ is a Lie group it has an analytic structure.

\smallskip
Our goal is to extend Theorem \ref{Th-MZ} to  spaces $C^s_*$.

\begin{proposition} \label{Pr-MZ} (Generalization of  Montgomery-Zippin's theorem)

\noindent Let $s>1$ and $(M, h)$ be a compact Riemannian manifold
with  the $C^{s}_*$-smooth coordinates.
Let $G$ be a Lie  group of transformations acting on $M$
so that $F_g\in C^{s}_*(M;M)$ uniformly in $g\in G$.
Then, in  the local real-analytic coordinates 
of $G$ and the $C^{s}_*$-smooth coordinates of $M$, 
$$
F \in C^{s}_*(G \times M;M).
$$
\end{proposition}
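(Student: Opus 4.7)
The plan is to follow the structure of Montgomery-Zippin's argument (Theorem \ref{Th-MZ}) while tracking Zygmund bounds in place of $C^k$ bounds, exploiting the coordinate-direction characterization of the norm given by (\ref{Zygmund2}). Write $s=\hat s+\tilde s$ with $\hat s\in\Z_+$ and $\tilde s\in(0,1]$. Since $C^s_*(M;M)\subset C^{\hat s}(M;M)$ with uniform control, applying Theorem \ref{Th-MZ} with $k=\hat s$ already gives $F\in C^{\hat s}(G\times M;M)$ in the analytic coordinates of $G$ and the $C^s_*$ coordinates of $M$. By (\ref{Zygmund2}), the statement $F\in C^s_*(G\times M;M)$ then reduces to a uniform estimate
\[
\rho^{-\tilde s}\,\bigl|\Delta^2_{\rho,y^i}\,\partial_{y^i}^{\hat s}F^m(g,x)\bigr|\le C
\]
for $0<\rho<\delta$, each component $F^m$, and each coordinate direction $y^i$ on $G\times M$.

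When $y^i=x^j$ is an $M$-coordinate the bound is immediate, since $\partial_{x^j}^{\hat s}F(g,x)=\partial_{x^j}^{\hat s}F_g(x)$ and the estimate follows from the uniform-in-$g$ hypothesis $F_g\in C^s_*(M;M)$ via (\ref{Zygmund2}) on $M$. The substantive case is $y^i=g^\alpha$. Here I would proceed in the spirit of Montgomery-Zippin: fix a basis $\{X_1,\dots,X_p\}$ of the Lie algebra of $G$ with associated left-invariant vector fields $L_1,\dots,L_p$, whose coefficients in the analytic chart $(g^\alpha)$ are real-analytic. Differentiating the group identity $F(g\exp(tX_\beta),x)=F_g(F_{\exp(tX_\beta)}(x))$ in $t$ at $t=0$ yields
\[
L_\beta F(g,x)=DF_g|_x\bigl(X_\beta^{\#}(x)\bigr),\qquad X_\beta^{\#}(x):=\partial_t F_{\exp(tX_\beta)}(x)\big|_{t=0}.
\]
Writing $\partial/\partial g^\alpha=\sum_\beta a^\alpha_\beta(g) L_\beta$ with analytic $a^\alpha_\beta$, and iterating this identity $\hat s$ times via Fa\`a di Bruno along the orbit $t\mapsto F_{\exp(tX_\beta)}(x)$, one expresses $\partial_{g^\alpha}^{\hat s}F^m(g,x)$ as a finite sum of products of three kinds of factors: real-analytic functions of $g$ alone; $x$-derivatives of $F_g$ of order at most $\hat s$, evaluated at $x$; and components of the fields $X_\beta^\#$ together with their iterated $x$-derivatives of order at most $\hat s-1$, which depend only on $x$.

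Since the third kind of factor depends only on $x$ and is uniformly bounded on $M$ (by the joint $C^{\hat s}$-regularity already provided by Theorem \ref{Th-MZ}), applying $\Delta^2_{\rho,g^\alpha}$ termwise contributes nothing to the Zygmund modulus beyond this uniform bound. Combined with the trivial $C^{\tilde s}$-control from the real-analytic $g$-coefficients, the desired estimate reduces to proving that for each multi-index $|\beta|\le\hat s$ the map $g\mapsto\partial_x^\beta F_g(x)$ lies in $C^{\tilde s}_*$ uniformly in $x$, after which a routine multiplicative-algebra argument (as in the proof of Lemma \ref{C-H} via \cite{Hor3}) completes the bound. The main obstacle I anticipate is precisely this Zygmund-in-$g$ control of $\partial_x^\beta F_g(x)$: I would obtain it by left-translation, writing $F_g=F_{g_0}\circ F_{g_0^{-1}g}$ to reduce to the behaviour of the orbit map $h\mapsto F_h(x)$ near $h=e$ along each one-parameter subgroup $\exp(\R X_\beta)$. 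Along such a subgroup the orbit is an integral curve of $X_\beta^\#$, and the required Zygmund regularity in $h$ follows from a single-variable ODE estimate whose vector field $X_\beta^\#$ is itself controlled inductively by running the same argument at order $\hat s-1$, the base case $\hat s=1$ being handled directly by the $C^s_*$-hypothesis on $F_g$.
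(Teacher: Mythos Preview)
Your overall architecture---apply Theorem \ref{Th-MZ} to get $F\in C^{\hat s}$, then use the diagonal characterization (\ref{Zygmund2}) to reduce to Zygmund control of $\partial_{x^j}^{\hat s}F$ in $x^j$ and of $\partial_{g^\alpha}^{\hat s}F$ in $g^\alpha$---is exactly the paper's. The $M$-direction is indeed immediate. The gap is in the $G$-direction, and it stems from the particular group identity you chose.

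You differentiate $F(g\exp(tX_\beta),x)=F_g(F_{\exp(tX_\beta)}(x))$, obtaining $L_\beta F(g,x)=DF_g|_x\,X_\beta^\#(x)$. On the right the $g$-dependence sits in $\partial_{x^j}F_g(x)$, so (as you correctly identify) the Zygmund-in-$g$ estimate for $\partial_{g^\alpha}^{\hat s}F$ reduces to showing $g\mapsto\partial_x^{\hat s}F_g(x)\in C^{\tilde s}_*(G)$. Your proposed resolution does not close. For the base case $1<s<2$ you say this is ``handled directly by the $C^s_*$-hypothesis on $F_g$'', but that hypothesis gives regularity in $x$, not in $g$. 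The ODE route also fails as stated: the variational equation for $\partial_x\Phi_t$ along $\Phi_t=F_{\exp(tX_\beta)}$ has coefficient $DX_\beta^\#$, and since $X_\beta^\#\in C^{s-1}_*$ with $s-1<1$, this derivative is not classically defined, so there is no ``single-variable ODE estimate'' to invoke. The induction on $\hat s$ does not help either: it only yields $F\in C^{s-1}_*(G\times M)$, hence $X_\beta^\#\in C^{s-2}_*$, which is weaker than what you need.

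The paper avoids this circularity by using the \emph{other} group identity, $F(g_0g^{-1},F_g(x))=F(g_0,x)$, which after differentiation gives (in the paper's notation) $\partial_\alpha F^m(g,x)=-\partial_\beta F^m(0,F(g,x))\cdot\partial_\alpha\tilde g^\beta(g)$. Now the only $g$-dependence on the right is through the analytic factor and through the argument $F(g,x)$ of the fixed function $\partial_\beta F(0,\cdot)$. The paper first shows $\partial_\beta F(0,\cdot)\in C^{s-1}_*(M)$ via the Montgomery--Zippin integral formula, and then $g\mapsto\partial_\beta F(0,F(g,x))$ is $C^{s-1}_*$ in $g$ simply because $F(g,x)$ is Lipschitz in $g$ (from $F\in C^1$). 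No control of $g\mapsto\partial_x F_g$ is ever needed. If you swap your left-multiplication identity for this one (equivalently, differentiate $F(\exp(tX_\beta)g,x)=F_{\exp(tX_\beta)}(F_g(x))$, i.e.\ use right-invariant fields), your outline becomes essentially the paper's proof.
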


\begin{proof}
Let $1< s'< s''<s$.
Let us show that the map
\bequ \label{cont_wrt_g}
{\mathcal F}:G \to C^{s'}_*(M;M),\quad g \mapsto F_g(\cdot)
\eequ
is continuous.  Essentially, this follows from the facts that $s'<s$ and our assumption that
 $F_g\in C^{s}_*(M;M)$ uniformly in $g\in G$, and that
$F(g, x)$ is uniformly continuous with respect to $(g, x)\in G\times M$. 
Let us explain this in details.
First, we consider the case when $1<s \leq 2$ and  a scalar, uniformly continuous function $f:G\times M\to \R$.
Denoting $f_g(x)=f(g, x)$ and assuming
that $g\mapsto f_g$ is continuous map $G\to C^{s'}_*(M)$,
we have, for $h,\rho\in \R$, $|h|<1$, $|\rho|\leq 1$,
\beq \label{13.08.1}
& &\bigg|\frac{\p(f_g-f_{g'})(x+h)}{\p x^i} - \frac{\p(f_g-f_{g'})(x)}{\p x^i}
\\ \nonumber
& &-\frac{1}{\rho}\bigg((f_g-f_{g'})(x+h+\rho e_i)-(f_g-f_{g'})(x+h)\\
& &\quad\quad \nonumber
+(f_g-f_{g'})(x+\rho e_i)-(f_g-f_{g'})(x)  \bigg)\bigg| 
\leq
C(s'') \rho^{s''},
\eeq
where $C(s'')$ is uniform with respect to $g, g'\in G$.

On the other hand,
\bequ \label{13.08.2}
\frac{1}{|h|^{s'}}\left|\frac{\p (f_g-f_{g'})}{\p x^i}(x+h)- \frac{\p (f_g-f_{g'})}{\p x^i}(x) \right|
\leq C(s'') |h|^{s''-s'}.
\eequ
Thus, for any $\e >0$,  we can find $h_0>0$ such that the left hand side of (\ref{13.08.2}) is less than $\e/2$ for $|h| <h_0$.
Moreover, we can find $\rho_0>0$ such that the right hand side of (\ref{13.08.1}) is less than $\e/2$ for
$|\rho| \leq \rho_0$. As $f:G\times M\to \R$ is continuous and, therefore uniformly continuous,
any $g\in G$ has a neighborhood $V_\e(g)$ in  $G$, such that for $g'\in V_\e(g)$ and $x\in M$,
$$
|(f_g-f_{g'})(x)| < \frac18 \rho_0 \e h_0^{s'}.
$$
Combining the above considerations we see that if $g' \in V_\e(g)$, then
$$
\|f_g-f_{g'}\|_{C^{s'}_*(M)} < \e.
$$
Applying the above for the coordinate representation of $F:G\times M\to M$, we obtain (\ref{cont_wrt_g}) in the case when $1<s'<s\leq 2$. Analyzing the higher derivatives similarly,
we obtain (\ref{cont_wrt_g}) for all $s>s'>1$.

\smallskip
Next, let $g^{\a},\, \a=1, 2,\dots, p$, be real-analytic coordinates on
$G$ near the identity element $\hbox{id}$, for which the coordinates
of the identity element are $(0,0,\dots,0)$.
Let $e_\a=\p_{g_\a}|_{g=\textrm{id}}$, and 
$t e_\a,$ $ t \in \R$, denote the elements of the one-parameter subgroup in $G$ generated by $e_\a$.
 Let
 $x^j,$ $j=1,2,\dots,n$, be the local
coordinates in an open set $B\subset M$ and  $x_0\in B$. Near $(\hbox{id},x_0)\in G\times M$, we represent $F$ in these coordinates
as $F(g,x)=(F^m(g,x))_{m=1}^n$.
As noted before, we will use  Latin indices $i,k,l$ for 
coordinates on $M$ and Greek indices $\a,\beta,\gamma$ on coordinates on $G$.
In particular, we will use this to indicate derivatives with respect to
$g^\a$ and $x^j$.

Consider now the identity given in \cite[Lemma A a), p. 209]{MZ},
 \bequ \label{MZ1}
F^m(\rho e_{\a}, x)-F^m(0, x)=\sum_{j=1}^n \left(\int_0^1 {\p_j F^m(t\rho e_{\a}, x)} \,dt \right)
\p_\a F^j(0, x),
\eequ
where $\rho>0$  is sufficiently small, and as noted before, 
$$
\p_j F^m(g,x)=\frac {\p F^m}{\p x^j}(g,x)\quad\hbox{and}\quad \p_\a F^j(g,x)=\frac{\p F^j}{\p g^{\a}}(g,x).
$$ 
Since  
$$
\p_j F^m(0, x)=\delta^m_j,\quad x \in M,
$$
it follows from the uniform continuity of $\nabla_x F(g,x)$ with respect to $(g,x)$ that the matrix
\bequ \label{MZ3}
\left[\int_0^1 \p_j F^m(t\rho e_{\a}, x) \,dt \right]_{j, m=1}^n
\eequ
is invertible for sufficiently small $\rho$ (note that $\rho>0$  can be chosen uniformly  with respect to $x\in M$). 
Denoting the inverse matrix (\ref{MZ3}) by
$\phi^m_j(\rho e_{\a}, x)$, we obtain from (\ref{MZ1}) the identity
\bequ \label{MZ2}
\p_\a F^m(0, x)= \phi^m_j(\rho e_{\a}, x) \,
\left[ F^j(\rho e_{\a}, x)-F^j(0, x) \right],
\eequ
when $\rho >0$ is sufficiently small. In the following, we can choose $\rho_0>0$
so that (\ref{MZ2}) is valid for all $0<\rho<\rho_0$  and  $x \in M$. 
Using our assumption that
 $F_g\in C^{s}_*(M;M)$, uniformly in $g\in G$, we see that the matrix
(\ref{MZ3}) is in $C^{s-1}_*(M)$ and thus its inverse matrix satisfies $\phi^m_j(\rho_0 e_{\a}, \cdot) \in C^{s-1}_*(M)$.
As $F_g \in C^{s}_*(M;M)$ uniformly with respect to $g\in G$, formula (\ref{MZ2}) implies
that
\bequ\label{MZ4}
\p_\a F^m(0, x)\in C^{s-1}_*(M).
\eequ

Our next goal is to show that 
$$
\p_\a F^m(g, x)=\frac{\p F^m(g, x)}{\p g^{\a}} \in C^{s-1}_*(G),
$$
uniformly with respect to $x \in M$.
To this end, let $g_0\in G$ and $x\in M$ be fixed, and consider an element $g\in G$ which varies 
in a neighborhood of $g_0^{-1}$.
Let us consider function $\tilde g(g):=g_0\, g^{-1}$, $\tilde g:G\to G$ and
$\tilde y(g)=F_g(x)$, $\tilde y:G\to M$.
The group $G$ has real-analytic local coordinates $(g^\a)_{\a=1}^p=
(g^1,\dots,g^p)$ near $g_0$. 
As the group operation $(g',g'')\mapsto g'\,\cdotp (g'')^{-1}$ is real-analytic, the local coordinates of $\tilde g(g)$,
denoted by $\tilde g^\a(g^1,\dots ,g^p)$, are also real-analytic. 
Then,
$$
F(\tilde g(g), \tilde y(g))= F(g_0\, g^{-1}, F_g(x))=F(g_0\, g^{-1} \, g, x)=F(g_0, x).
$$
Thus, $F(\tilde g(g), \tilde y(g))$ is independent of $g$, and using the
chain rule, we obtain
\beq \label{MZ5}
0&=& \frac{\p }{\p g^{\a}}(F(\tilde g(g), \tilde y(g)))\\
\nonumber 
&=&\p_\beta F^m(\tilde g(g), \tilde y(g)) \frac{\p \tilde g^{\beta}(g)}{\p g^{\a}} 
+
\p_j F^m(\tilde g(g), \tilde y(g)) \frac{\p \tilde y^j(g)}{\p g^{\a}}.
\eeq

Let us next use that fact that for all $z\in M$  we have  $\p_j F^m (g,z)|_{g=id}=\delta^m_j$. 
As derivatives of the map $(g,z)\mapsto \p_j F^m (g, z)$
 is continuous, the matrix $(\p_j F^m (g, z))_{j,m=1}^n$ is invertible for $(g,z)$ near $(\hbox{id},z)$.
 Let us  denote this inverse matrix by
by $\Phi^m_j(g,z)$. Composing this function with $\tilde g(g)$ and $\tilde y(g)$, we see that  the matrix
$(\p_j F^m(\tilde g(g), y(g)))_{j,m=1}^n$ has an inverse
$\Phi^m_j(\tilde g(g), y(g))$ when $g$ is sufficiently near to $g_0$.
Then we obtain from (\ref{MZ5}),
\beq \label{MZ6}
\p_\a F^k (g, x)&=&\p_\a \tilde y^m(g, x)\\
\nonumber &=&-\Phi^m_k(\tilde g(g),F(g, x)) \cdot
\p_\beta F^k(\tilde g(g), F(g, x)) \cdot 
\p_\a \tilde g^{\beta}(g).
\eeq
Now we evaluate (\ref{MZ6}) at $g=g_0$. As $\Phi^m_k(\tilde g(g_0),F(g_0, x))=\delta^m_k$, and in the local
coordinates of $G$ we have
$\tilde g(g_0)=0$, we obtain 
\bequ\label{MZ aux}
\p_\a F^m (g_0, x)
=-
\p_\beta F^m (0, F(g_0, x)) \cdot 
\p_\a \tilde g^{\beta}(g_0).
\eequ
Note that $g_0$ and $x$ above were arbitrary, even though we considered those as fixed parameters.
Next we change our point of view and will consider those as variables. For this
end, we use variable $g\in G$ instead of $g_0$ so that (\ref{MZ aux}) reads as
\bequ\label{MZ aux B}
\p_\a F^m (g, x)
=-
\p_\beta F^m (0, F(g, x)) \cdot 
\p_\a \tilde g^{\beta}(g).
\eequ
Here $g\mapsto \p_\a \tilde g^{\beta}(g)$  is real-analytic and by formula (\ref{MZ4}), 
the map $z\mapsto \p_\beta F^m (0, z)$ is in $C^{s-1}_*(M)$.

Let us next show that 
\bequ \label{MZ7}
\p_{\beta} F^m (0, F(g, x)) \in C^{s-1}_*(G \times M).
\eequ
Once this is shown, the fact that
$\p_\a \tilde g^{\beta}(g)$  is real-analytic  and the equation
(\ref{MZ aux B}) imply that  $\p_\a F^m(g, x) \in C^{s-1}_*(G \times M).$

To show (\ref{MZ7}),
we start by considering the case when $1<s < 2$. Then, using (\ref{MZ4})
and that fact that  $F \in C^1(G \times M)$ by Theorem \ref{Th-MZ}, we obtain in local coordinates
\beq\label{Holder estimate M}
& &\left|\p_\beta F^m(0, F(g+h,x))- \p_\beta F^m(0, F(g,x)) \right| 
\\ \nonumber
& \leq& C|F(g+h, x)-F(g, x)|^{s-1}
\leq C' |h|^{s-1},
\eeq
where $C, C'>0$ are uniform with respect to $(g, x)\in G\times M$. 
Hence (\ref{MZ7}) is valid, i.e., $\p_\a F^m(g, x) \in C^{s-1}_*(G \times M).$
Combining this with (\ref{MZ aux B}) we see that
for any $x\in M$,  the function $g\mapsto \p_\a F^m(g, x)$  
is in $C^{s-1}_*(G)$ and its norm in  $C^{s-1}_*(G)$  is
bounded by a constant which is independent of $x\in M$.
By assumption,
functions $x\mapsto \p_j F^m(g, x)$
are in $C^{s-1}_*(M)$, uniformly in $g\in G$. 
Thus, by  (\ref{Zygmund2}), i.e., \cite[Thm. 2.7.2(2)]{Tri}, the map 
$F:G\times M\to M$  is $C^{s}_*$-smooth with respect to
both $g$ and $x$, that is,
\bequ \label{MZZ4}
F(g,x) \in C^{s}_*(G \times M).
\eequ

Next, let us consider the case when  $s=2$.
To apply (\ref{Zygmund2}), we need to estimate 
$$
T_\beta^k :=\p_\beta F^k(0, F(g, x))
$$
and in particular its finite difference
$$
\Delta^2_h \left(T_\beta^k\right)=\p_\beta F^k(0, F(g+h, x))
 +\p_\beta F^k (0, F(g-h, x)) -
 2\p_\beta F^k (0, F(g, x)).
$$
 Using (\ref{MZZ4}), we observe that for $h=(h^\a)_{\a=1}^p$,
$$
|F^k(g+h, x)- F^k(g, x)- \p_\a F^k(g, x) \cdot h^\a| \leq C_s |h|^{s'}, \quad \hbox{for any}\,\,
s' <2. 
$$
On the other hand, by (\ref{MZZ4}), we have
$$
\p_\beta F^k (0, F(g, x)) \in C^t(G \times M), 
\quad \hbox{for any}\,\,
t <1.
$$
Let us combine these two  observations in the 
case when $s'=3/2$ and $t=2/3$. Then
\beq \label{MZ8}
 \Delta^2_h \left(T_\beta^k \right)
&=&\p_\beta F^k\big(0, F(g, x)+ (\p_\a F^k)(g, x) \,h^\a+O(|h|^{s'}))\\
& &+ \nonumber
 \p_\beta F^k (0, F(g, x)- (\p_\a F^k)(g, x) \, h^\a+O(|h|^{s'})) 
 \\ \nonumber
 & &-
 2\p_\beta F^k (0, F(g, x)) 
 \\ \nonumber
&=&O(  (|h|^{s'})^ t)= O(|h|).
\eeq
Applying (\ref{MZ4}) with $s=2$ to estimate the finite
differences of $T_\beta^k$ in the  $x^j$ directions
and  (\ref{MZ8})
to estimate the finite
differences of $T_\beta^k$  in the  $g^\a$ directions
 in the formula (\ref {Zygmund2}) for the Zygmund norm,
 we obtain  $T_\beta^k=\p_\beta F^k(g, x) \in C^1_*(G \times M)$. 
Moreover, by our assumption,
functions $x\mapsto \p_j F^m(g, x)$
are in $C^{1}_*(M)$ uniformly in $g\in G$. 
Thus, by applying (\ref{Zygmund2}) again,  we see that $F(g, x) \in C^2_*(G \times M)$.  

\smallskip
Next, we consider the case when $2<s \leq 3$.  By differentiating formula (\ref{MZ aux B}) 
with respect to $g^\beta$, we obtain
\beq \label{MZZ5}
\p_\a\p_\beta F^k(g, x)&=&-\frac{\p}{\p g^\beta} \bigg( \p_{\gamma} F^k (0, F(g, x)) \, 
\p_\a \tilde g^\gamma(g)\bigg)
\\ \nonumber
& =&(\p_\gamma\p_j F^k (0, F(g, x))\,\p_\beta F^j(g, x)\,\p_\a \tilde g^\gamma (g)
\\ \nonumber
& &+\p_\gamma F^k (0, F(g, x)) \,\frac{\p^2 \tilde g^\gamma}
{\p g^{\a} \p g^\beta}(g).
\eeq
We see easily that all terms in the right hand side of 
equation (\ref{MZZ5}), except maybe the term    $\p_\gamma\p_j F^k (0, F(g, x))$,
are in $C^1_*(G \times M)$. To analyze this term, we
observe that by (\ref{MZ4}) we have $\p_\gamma\p_j F^k (0, y) \in C^{s-2}_*(M)$. 
Thus considerations similar to those leading to 
the inequality (\ref{MZ7}) show that
$$
\p_\gamma\p_j F^k (0, F(g, x)) \in C^{s-2}_*(G \times M).
$$
Hence by (\ref{MZZ5}),
\bequ\label{alphabeta estimate}
\p_{\a}\p_\beta F^k(g, x)\in C^{s-2}_*(G),
\eequ
uniformly with respect to $x\in M$.
By assumption,
$F_g(\cdot) \in C^{s}_*(M;M)$ uniformly in $g\in G$,
so that
\bequ\label{ij estimate}
\p_{i}\p_j F^k(g, x)\in C^{s-2}_*(M),
\eequ
uniformly with respect to $g\in G$.

Applying (\ref{Zygmund2}) with  (\ref{alphabeta estimate}) and (\ref{ij estimate}),
we see that $F \in C^{s}_*(G \times M)$.
Iterating the procedures above we obtain (\ref{MZ7}) for all $s>1$.
\end{proof}

Combining Lemma \ref{C-H} and Proposition \ref{Pr-MZ}, we obtain the following result.

\begin{coro} \label{CH+MZ}
Let $M$ be a compact, $C^{s+1}_*$-smooth differentiable manifold, $ s>1$, and $h \in C^s_*(M)$ be a Riemannian
metric on $M$.
Let $G$ be a Lie group  acting on $(M,h)$ for which the actions $F_g: M\to M$ of the elements $g\in G$ are isometries. Define $F:G\times M\to M$ by $F(g,x)=F_g(x)$.
Then $F \in C^{s+1}_*(G \times M;M)$.
\end{coro}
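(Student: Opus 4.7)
The plan is to simply chain together the two main results of the appendix. First I would invoke Lemma \ref{C-H} (the Calabi--Hartman step) to promote the regularity of each individual isometry: since $h\in C^s_*(M)$ with $s>1$ and each $F_g:M\to M$ is an isometry of $(M,h)$, that lemma guarantees $F_g\in C^{s+1}_*(M;M)$ with a norm bound that is uniform in $g\in G$. This is precisely the uniform-in-$g$ regularity hypothesis one needs to feed into Proposition \ref{Pr-MZ}, except with the exponent $s$ replaced by $s+1$.

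Second, I would apply Proposition \ref{Pr-MZ} with the exponent $s+1$ (which is $>1$) in place of $s$. The hypotheses of that proposition become: $M$ is compact with $C^{s+1}_*$-smooth coordinate atlas (given), and $F_g\in C^{s+1}_*(M;M)$ uniformly in $g$ (just established via Lemma \ref{C-H}). Its conclusion, in the real-analytic coordinates on $G$ and the $C^{s+1}_*$-coordinates on $M$, is exactly
\[
F(g,x)\in C^{s+1}_*(G\times M;M),
\]
which is the desired statement.

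The only mild subtlety is a book-keeping issue about compatibility of atlases: Lemma \ref{C-H} requires that we have $C^{s+1}_*$ local coordinates on $M$ in which the $F_g$ can be represented, and Proposition \ref{Pr-MZ} is stated using the $C^s_*$-smooth coordinates of $M$. Since we assumed $M$ carries a $C^{s+1}_*$-smooth differentiable structure and the proof of Proposition \ref{Pr-MZ} only uses the intrinsic coordinate regularity to host functions of class $C^{s+1}_*$, there is no genuine obstacle; one just reads off both results in a common $C^{s+1}_*$ chart. I expect no further technical difficulty, since both lemmas were proved precisely so that they could be composed in this way.
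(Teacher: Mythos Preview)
Your proposal is correct and matches the paper's approach exactly: the corollary is stated immediately after the sentence ``Combining Lemma \ref{C-H} and Proposition \ref{Pr-MZ}, we obtain the following result,'' with no further proof given. Your two-step chaining (Lemma \ref{C-H} to get uniform $C^{s+1}_*$ regularity of each $F_g$, then Proposition \ref{Pr-MZ} applied with exponent $s+1$) is precisely the intended argument, and your remark on atlas compatibility correctly handles the only bookkeeping issue.
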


\section{Definition of orbifolds}
\label{appendix_orbifold}

Orbifolds were first introduced under the name "V-manifolds" in \cite{Satake} and later studied in \cite{Thurston} under its current name.
Here we review the definition in \cite{Satake}.

\begin{definition}[Smooth orbifold] \label{def-orbifold}
An ($d$-dimensional) orbifold chart on a topological space $X$ is the triple $(\tilde{U},G,\varphi)$ consisting of a connected open set $\tilde{U}\subset \mathbb{R}^d$, a finite group $G$ of diffeomorphisms of $\tilde{U}$, and a continuous map $\varphi$ from $\tilde{U}$ to an open set $U\subset X$ inducing a homeomorphism from the quotient space $\tilde{U}/G$ to $U$.

Let $(\tilde{U},G,\varphi)$ and $(\tilde{U}',G',\varphi')$ be two orbifold charts for $U$ and $U'$ with $U\subset U'$.
By an smooth embedding $\iota:(\tilde{U},G,\varphi)\to (\tilde{U}',G',\varphi')$ we mean a smooth embedding $\iota:\tilde{U}\to \tilde{U}'$ such that $\varphi=\varphi'\circ \iota$.

An orbifold of dimension $d$ consists of a Hausdorff topogical space $X$ and a family $\mathcal{A}$ of $d$-dimensional orbifold charts for open subsets in $X$ satisfying the following conditions.

\smallskip
(1) Every point $x\in X$ is contained in at least one orbifold chart in $\mathcal{A}$ (i.e., there exists an open neighborhood $U$ containing $x$ such that there is an orbifold chart for $U$ in $\mathcal{A}$).
If $x$ is contained in two orbifold charts in $\mathcal{A}$ for $U_1$ and $U_2$, then there exists an open set $U_3\subset U_1\cap U_2$ containing $x$ such that there is an orbifold chart for $U_3$ in $\mathcal{A}$.

\smallskip
(2) If $(\tilde{U},G,\varphi)$ and $(\tilde{U}',G',\varphi')$ for $U$ and $U'$ are two orbifold charts in $\mathcal{A}$ such that $U\subset U'$, then there exists a smooth embedding from $(\tilde{U},G,\varphi)$ to $(\tilde{U}',G',\varphi')$.
\end{definition}

A point $x$ of an orbifold is called singular, if there exists an orbifold chart $(\tilde{U},G,\varphi)$ for some open set $U$ containing $x$ such that the isotropy group of $\varphi^{-1}(x)\in \tilde{U}$ in $G$ is nontrivial. Points that are not singular are called regular, and the set of regular points has the structure of an open manifold.
A closed manifold is an orbifold with the group $G$ in orbifold charts being the trivial group.
The quotient space $\mathbb{R}^2/\mathbb{Z}_m$, $m\geq 2$, is a cone with cone angle $2\pi/m$, where $\mathbb{Z}_m$ acts on $\mathbb{R}^2$ by a rotation of angle $2\pi/m$ around the origin.
A manifold with boundary can be given an orbifold structure: every point on the boundary has a neighborhood homeomorphic to an open set in the half-plane $\mathbb{R}^d/\mathbb{Z}_2$, where $\mathbb{Z}_2$ acts on $\mathbb{R}^d$ by reflection along the hyperplane that models the boundary.
If $M$ is a manifold and $G$ is a group acting properly discontinuously on $M$, then the quotient $M/G$ has an orbifold structure.
An orbifold is said to be \emph{good} if it is the quotient of a manifold by an action of a discrete group.

\begin{definition}[Riemannian orbifold]
A Riemannian metric on an orbifold is an assignment to each orbifold chart $(\tilde{U},G,\varphi)$ of a $G$-invariant Riemannian metric $g_{\tilde{U}}$ on $\tilde{U}$ satisfying that each embedding in Definition \ref{def-orbifold}(2) is an isometry.
A Riemannian orbifold is a smooth orbifold equipped with a Riemannian metric.
\end{definition}

\end{document}